
\documentclass[11pt,a4paper]{amsart}
%

\newcommand{\Title}{Algebraic Aspects and Functoriality of the set of Affiliated Operators}%
\newcommand{\ShortTitle}{\Title}

\makeatletter
\newcommand{\raisemath}[1]{\mathpalette{\raisem@th{#1}}}
\newcommand{\raisem@th}[3]{\raisebox{#1}{$#2#3$}}
\makeatother


\newcommand{\AuthorOne}{Indrajit Ghosh}%
\newcommand{\AuthorOneAddr}{%
Statistics and Mathematics Unit, Indian Statistical Institute, 8th Mile, Mysore Road, RVCE Post, Bengaluru and Karnataka - 560 059, India
}%
\newcommand{\AuthorOneEmail}{%
indrajitghosh912@gmail.com
}%

\newcommand{\AuthorTwo}{Soumyashant Nayak}%
\newcommand{\AuthorTwoAddr}{%
Statistics and Mathematics Unit, Indian Statistical Institute, 8th Mile, Mysore Road, RVCE Post, Bengaluru and Karnataka - 560 059, India
}%
\newcommand{\AuthorTwoEmail}{%
soumyashant@isibang.ac.in
}%


\newcommand{\Keywords}{von Neumann Algebras, Affiliated Operators, Unbounded Operators}%


\newcommand{\SubjectClassification}{Primary: 46L10; Secondary: 47C15, 47L60, 46M15, 47B02}%

\newcommand{\pdfLinkColor}{purple}
\newcommand{\pdfUrlColor}{magenta}
\newcommand{\pdfCiteColor}{cyan}
\newcommand{\pdfTitle}{\Title}%
\newcommand{\pdfAuthor}{Indrajit Ghosh and Soumyashant Nayak}%
\newcommand{\pdfSubject}{Mathematics}%
\newcommand{\pdfKeywords}{\Keywords}%
\newcommand{\pdfCreator}{TeX Live 2020}%
\newcommand{\pdfCreationDate}{\today}%
\newcommand{\pdfColorLink}{true}%


%



\usepackage[utf8]{inputenc}%
\usepackage[T1]{fontenc}%
\usepackage{lmodern}
\usepackage[top=1.15in, bottom=1.15in, left=1.2in, right=1.2in]{geometry}
\usepackage{amsmath,amssymb}
\usepackage{amsthm}
\usepackage{mathtools}
\usepackage{mathrsfs} 
\usepackage{xfrac} 
\usepackage{dsfont} 
\usepackage{array}
\usepackage{verbatim}
\usepackage{graphicx}
\usepackage{enumitem} 
\usepackage{lipsum}%
\usepackage{tikz-cd}
\usepackage{hyperref}
\hypersetup{
	pdftitle={\pdfTitle},
	pdfauthor={\pdfAuthor},
	pdfsubject={\pdfSubject},
	pdfcreationdate={\pdfCreationDate},
	pdfcreator={\pdfCreator},
	pdfkeywords={\pdfKeywords},
	colorlinks=\pdfColorLink,
	linkcolor={\pdfLinkColor},
	urlcolor=\pdfUrlColor,
	citecolor=\pdfCiteColor,
	pdfpagemode=UseOutlines,
}
	


\theoremstyle{plain} 
\begingroup 
\newtheorem{thm}{Theorem}[section] 
\newtheorem{cor}[thm]{Corollary} 
\newtheorem{lem}[thm]{Lemma} 
\newtheorem{prop}[thm]{Proposition} 
\endgroup


\theoremstyle{definition}
\newtheorem{definition}[thm]{Definition} 
\theoremstyle{remark}
\newtheorem{remark}[thm]{Remark} 
\newtheorem{example}[thm]{Example} 



\newtheoremstyle{ser}
{8pt}
{8pt}
{\it}
{}
{\sf\bfseries}
{:}
{6mm}
{}

\newtheoremstyle{serr}
{8pt}
{8pt}
{\normalfont}
{}
{\sf}
{.}
{6mm}
{}

\theoremstyle{ser}

\theoremstyle{serr}

\theoremstyle{ser}

\numberwithin{equation}{section}




     		\newcommand{\kA}{\mathscr{A}}
     \newcommand{\sB}{\mathcal B}		
     \newcommand{\sC}{\mathcal C}		\newcommand{\kC}{\mathscr{C}}
     \newcommand{\sD}{\mathcal D}		
     		
     \newcommand{\sF}{\mathcal F}		
     		
     \newcommand{\sH}{\mathcal H}		
     		
\newcommand{\fK}{\mathfrak K}     \newcommand{\sK}{\mathcal K}		
     		
\newcommand{\fL}{\mathfrak L}     \newcommand{\sL}{\mathcal L}		
     		\newcommand{\kM}{\mathscr{M}}
     		\newcommand{\kN}{\mathscr{N}}

\newcommand{\fS}{\mathfrak S}     \newcommand{\sS}{\mathcal S}		
\newcommand{\fT}{\mathfrak T}     		
     		
     \newcommand{\sV}{\mathcal V}		\newcommand{\kW}{\mathscr{W}}
     \newcommand{\sW}{\mathcal W}



\def\XXint#1#2#3{{\setbox0=\hbox{$#1{#2#3}{\int}$ }
		\vcenter{\hbox{$#2#3$ }}\kern-.6\wd0}}



\newcommand{\R}{\mathbb{R}}

\newcommand{\C}{\mathbb{C}}

\newcommand{\N}{\mathbb{N}}



\newcommand{\tn}[1]{\textnormal{#1}}


%


\DeclareRobustCommand{\rchi}{{\mathpalette\irchi\relax}}
\newcommand{\irchi}[2]{\raisebox{\depth}{$#1\chi$}} 


\newcommand{\Bh}{\mathcal{B}(\mathcal{H})} 

\newcommand{\ran}{\textnormal{\textrm{ran}}}
\newcommand{\nul}{\textnormal{\textrm{null}}}
\newcommand{\dom}{\textnormal{\textrm{dom}}}

\newcommand{\aff}{\textrm{aff}}
\newcommand{\afm}{\mathscr M _{\tn\aff}}
\newcommand{\afn}{\mathscr N _{\tn\aff}}

\newcommand{\ipdt}[2]{\left\langle {#1, #2} \right\rangle}

\newcommand{\raR}{\textbf{R}}
\newcommand{\nulN}{\textbf{N}}
\newcommand{\krein}[1]{{#1}^{\textbf{Kr}}}
\newcommand{\fried}[1]{{#1}^{\textbf{Fr}}}

\newcommand{\graph}[1]{\text{Graph}\left( #1\right)}

\newcommand{\invran}{\mathrel{\overset{\makebox[0pt]{\mbox{\normalfont\tiny\sffamily inv}}}{\circ}}}

\newcommand{\mvnaff}[1]{#1_{\aff}^{\text{MvN}}}
\newcommand{\affc}[1]{#1_{\text{aff}}^{c}}
\newcommand{\affs}[1]{\text{Aff}_{s}\left( #1 \right)}

\setlength{\arrayrulewidth}{0.5mm}
\setlength{\tabcolsep}{18pt}


%



\begin{document}%
%


\title[\MakeUppercase\ShortTitle]{\MakeUppercase \Title}
\author{\AuthorOne}%
\address[\AuthorOne]{\AuthorOneAddr}%
\email{\href{mailto:\AuthorOneEmail}{\AuthorOneEmail}}%

\author{\AuthorTwo}%
\address[\AuthorTwo]{\AuthorTwoAddr}%
\email{\href{mailto:\AuthorTwoEmail}{\AuthorTwoEmail}}%

\date{}%
\keywords{\Keywords}%
\subjclass[2020]{\SubjectClassification}%



\begin{abstract}%
    \label{sec:abstract}%
In this article, we aim to provide a satisfactory algebraic description of the set of affiliated operators for von Neumann algebras. Let $\mathscr{M}$ be a von Neumann algebra acting on a Hilbert space $\mathcal{H}$, and let $\mathscr{M}_{\text{aff}}$ denote the set of unbounded operators of the form $T = AB^{\dagger}$ for $A, B \in \mathscr{M}$ with $\text{null} (B) \subseteq \text{null} (A)$, where $(\cdot)^{\dagger}$ denotes the Kaufman inverse. We show that $\mathscr{M}_{\text{aff}}$ is closed under sum, product, Kaufman-inverse and adjoint, and has the structure of a {\it (right) near-semiring}; Moreover, the above quotient representation of an operator in $\mathscr{M}_{\text{aff}}$ is essentially unique. Thus we may view $\mathscr{M}_{\text{aff}}$ as the multiplicative monoid of unbounded operators on $\mathcal{H}$ generated by $\mathscr{M}$ and $\mathscr{M} ^{\dagger}$. We further show that our definition of affiliation, as reflected in $\mathscr{M}_{\text{aff}}$, subsumes the traditional one. Let $\Phi$ be a unital normal $*$-homomorphism between represented von Neumann algebras $(\mathscr{M}; \mathcal{H})$ and $(\mathscr{N}; \mathcal{K})$. Using the quotient representation, we obtain a canonical extension of $\Phi$ to a mapping $\Phi_{\text{aff}} : \mathscr{M}_{\text{aff}} \to \mathscr{N}_{\text{aff}}$ which is a near-semiring homomorphism that respects Kaufman-inverse and adjoint; in addition, $\Phi_{\text{aff}}$ respects Murray-von Neumann affiliation of operators and also respects strong-sum and strong-product. Thus $\mathscr{M}_{\text{aff}}$ is intrinsically associated with $\mathscr{M}$ and transforms functorially as we change representations of $\mathscr{M}$. Furthermore, $\Phi_{\text{aff}}$ preserves operator properties such as being symmetric, or positive, or accretive, or sectorial, or self-adjoint, or normal, and also preserves the Friedrichs and Krein-von Neumann extensions of densely-defined closed positive operators. As a proof of concept, we transfer some well-known results about closed unbounded operators to the setting of closed affiliated operators for properly infinite von Neumann algebras, via `abstract nonsense'.
\end{abstract}

\maketitle%
\thispagestyle{empty}%

\tableofcontents

\section{Introduction}%
\label{sec:introduction}%

In von Neumann's formulation of the mathematical foundations of quantum mechanics (see \cite{vNeuman_quantum}), the necessity of studying unbounded operators becomes apparent at the outset; for instance,  the momentum observable corresponds to an unbounded self-adjoint operator on a separable Hilbert space. The process of differentiation already points towards the naturality and importance of studying unbounded operators. Motivated by the closed graph theorem for bounded operators and with a view towards allowing vestiges of analysis to enter into the picture (as opposed to considering arbitrary linear operators), unbounded operators with closed graphs have been extensively studied. Such operators are known as {\it closed operators} and the process of differentiation (say, on the Sobolev space  $H^1(\R) \subseteq L^2(\R)$) offers a natural example.

In the literature, the relationship between closed operators and bounded operators is generally described in terms of loose analogies, similar to how we did in the previous paragraph. A geometric recipe for converting closed operators into bounded operators is provided by projections onto their graphs, thus making their study amenable to bounded-operator techniques; this viewpoint may be attributed to Stone (see \cite{stone51}). In this article, we provide a novel, simple, and complete algebraic characterization of closed operators in terms of bounded operators, which is roughly sketched out in the next paragraph. In fact, in Theorem \ref{thm:quotient} we establish such a result in the more general setting of closed {\it affiliated operators} for a von Neumann algebra. 

For a Hilbert space $\sH$, by the phrase ``unbounded operator on $\sH$'' we mean a not-necessarily-bounded linear operator whose domain and range lie in $\sH$. The Kaufman inverse of an unbounded operator on $\sH$ (see Definition \ref{def:kaufman_inv}) is denoted by $(-)^\dagger$ and plays a central role in our discussion. The set of bounded everywhere-defined operators on $\sH$ is denoted by $\sB(\sH)$. For $A \in \sB(\sH)$, the closed graph theorem tells us that $A$ is a closed operator. Moreover, the restriction of $A$ to a closed subspace $\sV$ of $\sH$ is a closed (and bounded) operator, and from Remark \ref{rem:kauf_inv_1}, we have $A|_{\sV} = AE_{\sV}^{\dagger}$ where $E_{\sV}$ denotes the orthogonal projection from $\sH$ onto $\sV$. From Lemma \ref{lem:kauf_inv_closed}, we note that the Kaufman inverse of $A$, $A^{\dagger}$, is a closed operator. It is an elementary fact (see Lemma \ref{lem:unbdd_0}) that $TA'$ is closed whenever $T$ is closed and $A'$ is closed and bounded. Thus for bounded operators $A, B, E \in \sB(\sH)$ with $E$ a projection, setting $T=B^{\dagger}$ and $A' = AE^{\dagger}$, we see that the operator $B^{\dagger}AE^{\dagger}$ is a closed operator on $\sH$. In Theorem \ref{thm:quotient}-(ii), we prove that the converse holds, by demonstrating that every closed operator on $\sH$ has a representation of the form $P^{-1} AE^{\dagger}$ for $A, P, E \in \sB(\sH)$ with $P$ being a positive operator with dense range (and hence, trivial nullspace) and $E$ a projection. Our main purpose behind this investigation is to facilitate the careful examination of the notion of affiliated operators for von Neumann algebras, and to gain an understanding of their elusive algebraic structure.

 Given a von Neumann algebra $\kM$ acting on the Hilbert space $\sH$, Murray and von Neumann found it a powerful tool (see \cite{MvN-I}) to consider those closed operators on $\sH$ that are `affiliated' with $\kM$, meaning that $U^*TU = T$ for all unitary operators $U$ in $\kM'$, the commutant of $\kM$ relative to $\sB(\sH)$. For reasons that will become apparent later, we call these operators MvN-affiliated operators instead of simply calling them affiliated operators. We denote the set of MvN-affiliated operators by $\afm^{\text{MvN}}$. Since the commutant of $\sB(\sH)$ is trivial, note that $\sB(\sH)_{\aff}^{\text{MvN}}$ is nothing but the set of closed operators on $\sH$. It is easily surmised that the above definition of $\afm^{\text{MvN}}$ is inspired by the double commutant theorem which asserts that $\mathscr{M} = (\mathscr{M}')'$. On one hand, this appears to suggest an intrinsic association of these operators with $\mathscr{M}$, independent of its representation on Hilbert space (hence the term `affiliated'). On the other hand, since unbounded operators come equipped with a domain (which is a subspace of $\mathcal{H}$) and the definition of MvN-affiliated operators involves the commutant of $\mathscr{M}$ relative to $\mathcal{B}(\mathcal{H})$, the Hilbert space $\mathcal{H}$ appears to be omnipresent.  

In \cite[pg. 189]{skau_paper},  it is noted that if $\Phi : \mathscr{M} \to \mathscr{N}$ is a $*$-isomorphism between von Neumann algebras $\mathscr{M}$ and $\mathscr{N}$, then there is a one-to-one correspondence between $\afm^{\text{MvN}}$ and $\afn^{\text{MvN}}$ and a recipe for the identification is provided via the polar decomposition theorem for MvN-affiliated operators, and the spectral theorem for unbounded self-adjoint operators.  Although this serves as a loose indication of the intrinsic nature of the set of affiliated operators, the following questions need to be addressed for a complete and satisfactory picture.
\begin{enumerate}
    \item Is the identification given in \cite[pg. 189]{skau_paper} related to the underlying isomorphism $\Phi$ in a canonical manner?
    \item Instead of a $*$-isomorphism, if we have a unital normal {\it homomorphism} $\Phi : \mathscr{M} \to \mathscr{N}$, does $\Phi$ extend to a map from $\afm^{\text{MvN}}$ to $\afn^{\text{MvN}}$ in a canonical manner? In other words, is the construction, $\mathscr{M} \mapsto \afm^{\text{MvN}}$, functorial in the appropriate sense?
\end{enumerate}
In this paper, we answer the above two questions in the affirmative. Before we outline our approach, we point out the subtleties in the questions and why claims of `functoriality' in the literature (which only involve isomorphisms) may be flawed. 

Let $\mathscr{W}^*$-Alg denote the category of von Neumann algebras with morphisms as unital normal $*$-homomorphisms.  Let $\sS (\mathscr{M})$ ($\sL (\mathscr{M})$, respectively) denote the $*$-algebra of measurable operators (locally measurable operators, respectively) which are subsets of $\afm^{\text{MvN}}$ (see Definitions \ref{def:meas_op}, \ref{def:loc_mes_op}). In Remark \ref{rem:mes_op}, we note that the constructions $\mathscr{M} \mapsto \sS(\mathscr{M})$, $\mathscr{M} \mapsto \sL(\mathscr{M})$, are {\bf not} functorial on $\mathscr{W}^*$-Alg, though for an isomorphism $\mathscr{M} \cong \mathscr{N}$ in $\mathscr{W}^*$-Alg, we do have isomorphisms $\sS (\mathscr{M}) \cong \sS (\mathscr{N})$ and $\sL(\mathscr{M}) \cong \sL (\mathscr{N})$ as $*$-algebras in a canonical manner. In other words, the set of measurable operators (or, locally measurable operators) for $\kM$ may not be well-behaved with respect to arbitrary normal $*$-representations of $\kM$ but is well-behaved with respect to {\it faithful} normal $*$-representations of $\kM$.

When $\kM$ is a finite von Neumann algebra, in \cite{berberian}, \cite{roos}, the set of densely-defined operators in $\afm ^{\text{MvN}}$ is described as the {\it maximal quotient ring} of $\kM$. In fact, it may be viewed as the Ore localization of $\kM$ with respect to the multiplicative subset of non-zero-divisors (see \cite{handelman}), so that every non-zero-divisor in $\kM$ has an inverse in $\afm^{\text{MvN}}$. In \cite{MvN-I}, it is observed that the notions of strong-sum and strong-product (see Definition \ref{def:str_sum_pdt}) define a ring structure on the set of densely-defined operators in $\afm ^{\text{MvN}}$, and with the adjoint operation, it becomes a $*$-algebra. These $*$-algebras are also known as {\it Murray-von Neumann algebras} in the literature (see \cite{kadison_liu_2014}, \cite{Nayak_2021}). In \cite{Nayak_2021}, topological and order-theoretical aspects of Murray-von Neumann algebras are further studied and it is shown that, the $*$-algebra of densely-defined operators in $\afm^{\text{MvN}}$ for a finite von Neumann algebra $\kM$ gives a functorial construction from the category of finite von Neumann algebras to the category of unital ordered complex topological $*$-algebras. As evidenced above, when $\kM$ is a finite von Neumann algebra, much is known and has been said about the algebraic/topological structure of the set of densely-defined operators in $\afm ^{\text{MvN}}$.

Unfortunately, the same cannot be said for arbitrary von Neumann algebras, and so far in the literature, $\afm^{\text{MvN}}$ has been treated merely as a set, with little mention or investigation of the algebraic structure it may possess. A major issue is the peculiar behaviour of domains of unbounded operators. For example, the sum of two closed operators need not be closed or even pre-closed, and the sum of two densely-defined operators need not be densely-defined (see \cite[pg. 134]{messirdi_almost_closed}). Note that for operators affiliated with {\it finite} von Neumann algebras, the afore-mentioned domain issues are more manageable.

Clearly, any attempt at understanding the functorial nature of affiliation must begin with the exploration of the functorial nature of the set of domains of affiliated operators. With the Douglas factorization lemma (see \cite{douglas_lemma_paper}, \cite[Theorem 2.1]{nayak_douglas_vNa}) as the main tool, in \S \ref{sec:aff_subspaces} we establish the functorial nature of the lattice of operator ranges, which we denote by $\affs{\kM}$. As a generalization of a result of Fillmore and Williams (see \cite[Theorem 1.1]{fillmore-williams}), in Theorem \ref{thm:dom_ran} we show that $\affs{\kM}$ is precisely the set of domains of MvN-affiliated operators for $\kM$.

In our quest to identify the appropriate algebraic structure on the set of affiliated operators, we first expand the scope of the definition of affiliation. 

\begin{definition}
\label{def:affiliation}
Let $\kM$ be a von Neumann algebra acting on the Hilbert space $\sH$. An unbounded operator $T$ on $\sH$ is said to be {\it affiliated} with $\mathscr{M}$ if $T = AB^\dagger$ for some $A, B\in \kM$ with $\nul(B) \subseteq \nul(A)$. The set of operators affiliated with $\kM$ is denoted by $\afm$. 
\end{definition}
Note that the above definition makes no mention of the commutant of $\kM$. For an intuitive interpretation of affiliated operators as `quotients' of operators in $\kM$, the reader may refer to the discussion in the beginning of \S \ref{sec:aff_operators}. Before we point out the connection between our definition of affiliation reflected in $\afm$ and the notion of MvN-affiliation, we take a short detour. 

The operators of the form $AB^{\dagger}$ have been studied under various guises in the literature. They are called ``op\'{e}rateur J uniforme'' by Dixmier in \cite{dixmier1949}, and ``semi-closed operators'' by Kaufman in  \cite{kaufman_semi_closed}. In \cite{kaufman_semi_closed}, it is shown that the set of semi-closed operators on $\sH$ is the smallest multiplicative set generated by the set of closed operators on $\sH$; moreover, it is also  closed under operator addition or sum. In \cite{izumino_quotient_bdd}, Izumino reproves these results (and more) by providing concrete formulae for sums and products of semi-closed operators. In Theorem \ref{thm:m_aff_monoid}, we generalize these results of Kaufman and Izumino to the setting of von Neumann algebras by showing that $\afm$ is closed under the binary operations, {\it sum} and {\it product}. Furthermore, in Theorems \ref{thm:m_aff_is_kau_closed} and \ref{thm:adj_of_aff_opes}, we observe that $\afm$ is closed under the unary operations, {\it Kaufman-inverse} and {\it adjoint}. Note that our notion of adjoint (see Definition \ref{def:adj_not_dd_op}) is for arbitrary unbounded operators on $\sH$, which need not be densely-defined. We equip $\afm$ with the algebraic structure afforded by the above four algebraic operations, and note that $(\afm; +, \cdot)$ is a {\it right near-semiring} (see Definition \ref{def:near-semiring} and Remark \ref{rem:distributivity_operators}). It is worthwhile to note that our approach not only generalizes the results in \cite{kaufman_semi_closed} and \cite{izumino_quotient_bdd} but also brings substantial conceptual and notational clarity, even in the particular case of $\kM= \sB(\sH)$.

In \cite[Theorem 1]{kaufman}, it is shown that every closed operator on $\sH$ is of the form $AB^{\dagger}$ where $A, B \in \sB (\sH)$; in other words, every closed operator is semi-closed. When $\sH$ is infinite-dimensional, the converse is not generally true as there are plenty of semi-closed operators on $\sH$ that are not closed.  We generalize \cite[Theorem 1]{kaufman} to $\afm$ (see proof of Theorem \ref{thm:quotient}-(i)). In fact, we go a step further in Theorem \ref{thm:quotient}-(ii) by showing that an operator in $\afm$ is closed if and only if it is of the form $P^{-1} AE^{\dagger}$ for $A, P, E \in \kM$ with $P$ a one-to-one positive operator and $E$ a projection. In the particular case of $\kM = \sB(\sH)$, this reduces to the algebraic characterization of closed operators in terms of bounded operators, which we discussed previously in the third paragraph of this section. We note that $\afm^{\text{MvN}}$ is precisely the set of closed operators in $\afm$ (see Theorem \ref{thm:quotient}-(i)); thus our definition of affiliation subsumes the traditional definition of affiliation in the sense of Murray and von Neumann.

In Theorem \ref{thm:uniqueness_of_quotients}, we show that the quotient representation of an operator in $\afm$ is essentially unique. This helps us to see that there is a {\it unique} extension (see Theorem \ref{thm:uniqueness_phi_aff}) of the mapping $\Phi : \kM \to \kN$ to a mapping $\Phi_{\aff} : \afm \to \afn$ (see Definition \ref{defn:Phi_aff}) which respects sum, product, Kaufman-inverse, and adjoint; in fact, for the above uniqueness result, it suffices to assume that the extension respects product and Kaufman-inverse. In order to address the key motivation behind our study, we must answer the following question: In what sense is the extension $\Phi_{\aff}$ canonical? From an algebraic perspective, the answer is clear from Theorem \ref{thm:uniqueness_phi_aff}. For the sake of applications, often our interest is in $\afm ^{\text{MvN}}$ rather than $\afm$. In \S \ref{subsec:closed_functoriality}, we make an effort to convince the reader that the restriction of $\Phi_{\aff}$ to $\afm^{\text{MvN}}$ is the canonical extension of $\Phi$ to $\afm^{\text{MvN}}$, by viewing it from various natural perspectives.

In Theorem \ref{thm:phi_pres_closed}, we show that the extension $\Phi_{\aff} : \afm \to \afn$ maps closed operators to closed operators, so that if necessary, we may consider (by restriction) $\Phi_{\aff}$ as a mapping from $\afm^{\text{MvN}}$ to $\afn^{\text{MvN}}$. Furthermore, $\Phi_{\aff}$ maps densely-defined operators in $\afm^{\text{MvN}}$ to densely-defined operators in $\afn^{\text{MvN}}$. In fact, in Corollary \ref{cor:phi_pres_st_sum_pdt} and Theorem \ref{thm:uniqueness_phi_aff_2}, we observe that $\Phi_{\aff}$ is the {\it unique} extension of $\Phi$ to $\afm^{\text{MvN}}$ that preserves strong-sum, strong-product, and adjoint.

As mentioned earlier, Stone's idea of characteristic projection matrices provides a geometric perspective by converting closed operators on $\sH$ into projections onto closed subspaces of $\sH \oplus \sH$. In Lemma \ref{lem:char_aff}, we show that if $T \in \afm^{\text{MvN}}$, then its characteristic projection matrix, $\rchi(T)$, lies in $M_2(\kM)$. Since $\Phi$ naturally induces a map from $M_2(\kM)$ to $M_2(\kN)$ via $\Phi_{(2)} := \Phi \otimes I_2$ which sends projections to projections, this suggests another recipe for an extension of $\Phi$ to $\afm^{\text{MvN}}$. In Theorem \ref{thm:phi_aff_geom}, we show that in fact $\Phi_{\aff} = \rchi^{-1} \circ \Phi_{(2)} \circ \rchi$ on $\afm^{\text{MvN}}$. At this point, we hope that we have been able to persuade the reader that our definition of affiliation provides significant algebraic simplicity without compromising the traditional usages of the term.

In \S \ref{sec:misc_alg_props}, we show that $\Phi_{\aff}$ preserves operator properties such as being symmetric, positive, accretive, sectorial, self-adjoint, normal. This is achieved by re-interpreting these properties in terms of bounded operators appearing in their respective quotient representations (for example, see Proposition \ref{prop:phi_num_range}). In \S \ref{sec:krein_friedrichs}, we observe that $\Phi_{\textrm{aff}}$ maps the Friedrichs and Krein-von Neumann extensions of a densely-defined closed positive operator $S$ in $\afm$ to their respective counterparts for $\Phi_{\textrm{aff}}(S)$ in $\afn$. En route, a user-friendly crash-course on Krein's extension theory (with a view towards functoriality) is also provided. As a proof of concept, in \S \ref{sec:applications}, we transfer some well-known results about closed unbounded operators to the setting of affiliated operators for properly infinite von Neumann algebras, via `abstract nonsense'.

We have strived to make our account as self-contained as possible.  We anticipate simplifications of many existing results in the literature with the help of the language developed herein, some examples of which will be explored in forthcoming work.

\section{Preliminaries}%

\label{sec:prelim}%

This section is a potpourri of notations, concepts, well-known results, and some elementary lemmas that assist us throughout this article. Our primary reference for unbounded operators is \cite{konrad_unbdd}, and for the general theory of von Neumann algebras, we use \cite{kr-I, kr-II}. In \S \ref{subsec:near_semiring}, we briefly discuss near-semirings.

\vspace{0.3cm}\noindent{}\textbf{Notations.}
\label{paper_notations}
    All of our Hilbert spaces are over the field $\C$. We shall denote von-Neumann algebras by $\kM, \kN, \dots$, Hilbert spaces by $\sH, \sK, \dots$, operators on Hilbert spaces by $S, T, A, B, \dots$ and vectors in a Hilbert space by $x, y, z, \dots$; The zero-dimensional subspace of $\sH$ is denoted by $\{ 0 \}_{\sH}$.\ The set of bounded operators on a Hilbert space $\sH$ is denoted by $\sB(\sH)$, and the set of closed operators (see Definition \ref{def:closed_op}) on $\sH$ is denoted by $\sC(\sH)$. When $\kM$ is a von Neumann algebra acting on the Hilbert space $\sH$, we denote it by $(\kM; \sH)$ and call it a represented von Neumann algebra. The lattice of projections in $\kM$ will be denoted by $\kM_{\text{proj}}$ and the set of all self-adjoint operators in $\kM$ will be denoted by $\kM_{\text{sa}}$. For $k \in \N$, the unital normal homomorphism from $M_k(\mathscr{M})$ to $M_k(\mathscr{N})$ given by $$\left[A_{ij}\right]_{1 \le i, j \le k} \mapsto \left[\Phi(A_{ij})\right]_{1 \le i, j \le k},$$ is denoted by $\Phi_{(k)}$. For a bounded operator $T$ on a Hilbert space, we use the notation $|T| := (T^*T)^{\frac{1}{2}}$. We will frequently use this in the context of $T^*$; so it helps to keep in mind that $|T^*| =  (TT^*)^{\frac{1}{2}}$.

\subsection{Unbounded Operators}
For a Hilbert space $\sH$, note that $\mathcal{B}(\oplus_{i=1}^k \sH)$ and $M_k \big( \Bh \big)$ are isomorphic as von Neumann algebras (see \cite[\S 2.6]{kr-I} for more details). We often use this identification to view $M_k \big( \Bh \big)$ as acting on $\oplus_{i=1}^k \sH$. For a represented von Neumann algebra $(\kM; \sH)$, we usually consider $M_k(\kM)$ as the represented von Neumann algebra $\big( M_k(\kM); \oplus_{i=1}^k \sH \big)$ in the above sense. For bounded (everywhere-defined) operators $A', B'$ on a Hilbert space $\sH'$, we use the notation, 
\[
\fT_{A', B'} :=  \begin{bmatrix}
A' & -B'\\
0 & 0
\end{bmatrix},
\fS_{A', B'} := 
\begin{bmatrix}
B' & 0\\
A' & 0
\end{bmatrix} \in M_2 \big( \sB(\sH') \big).
\]

Below we recall some basic definitions and results about linear operators on a Hilbert space $\sH$. Any linear operator $T$ with domain and range in $\sH$ is called an \emph{unbounded operator} on $\sH$. We denote the domain of definition of an unbounded linear operator $T$ on a Hilbert space $\sH$ by $\dom(T)$, which potentially constitutes of a smaller subset of $\mathcal{H}$. We emphasize that by the term ``unbounded'', we mean ``not necessarily bounded'' rather than ``not bounded''. The \emph{graph} of $T$ is defined as,
\[
	\graph{T} := \Big\{ (x, Tx)~:~x\in\dom(T) \Big\} \subseteq \sH\oplus\sH.
\]

\begin{definition}[Closed Operator]
\label{def:closed_op}
A linear operator $T$ acting on $\sH$ is said to be closed if $\graph{T}$ is a closed subset of $\sH\oplus\sH$. We say that $T$ is \emph{pre-closed} or \emph{closable} if the closure of $\graph{T}$ is the graph of an operator. We denote this operator by $\overline{T}$ and call it the {\it closure} of $T$. When $T$ is pre-closed, note that $\graph{\overline{T}} = \overline{\graph{T}}$.
\end{definition}

\begin{definition}
For unbounded operators $T, T_0$ on $\sH$, we say that $T_0$ is an \emph{extension} of $T$, symbolically written as $T\subseteq T_0$, when the following conditions hold:
\begin{enumerate}
	\item $\dom(T)\subseteq \dom(T_0)$, and
	\item $Tx = T_0x$ for all $x\in \dom(T)$.
\end{enumerate}
If $T$ is pre-closed, note that the closure of $T$, $\overline{T}$, is the smallest closed extension of $T$. 
\end{definition}

\begin{definition}
\label{def:str_sum_pdt}
For a pair of unbounded operators $S, T$ on $\sH$, the operators $S+T$ and $ST$ are defined as follows:
\begin{align*}
	\dom(S + T)&:=\dom(S)\cap \dom(T), \\
	(S+T)x&:=Sx+Tx, \text{ for all }x\in \dom(S+T);\\
	\dom(ST)&:=\{x\in \dom(T)~:~Tx \in \dom(S)\},\\
	(ST)x&:=S(Tx),\text{ for all }x\in \dom(ST).
\end{align*}

If $S, T$ are closed operators on $\sH$ such that $S+T$ ($ST$, respectively) is pre-closed, then its closure $\overline{S+T}$ ($\overline{ST}$, respectively) is called the {\it strong-sum} ({\it strong-product}, respectively) of $S$ and $T$, and is denoted by $S \;\hat{+}\; T$ ($S \;\hat{\cdot}\; T$, respectively).
\end{definition}

\begin{remark}
\label{rem:distributivity_operators}
    Let $\sH$ be a Hilbert space, and $R, S, T$ be unbounded operators on $\sH$. Then it is easily verified that     \[
    R + (S + T) = (R + S) + T \, \text{ and }\, R(ST) = (RS) T.
    \]
    In other words, the sum and product are associative binary operations on the set of unbounded operators on $\sH$. But only right-distributivity holds in general:
    \[
    (S + T)R = SR + TR , \;\; R(S+T) \supseteq RS + RT.
    \]

When $\sH$ is infinite-dimensional, by a result of Neumark (see \cite{neumark_square}, \cite{konrad_1983}, \cite{chernoff_paul}), it is known that there is a densely-defined closed symmetric operator $T$ on $\sH$ such that $\dom (T^2) = \{ 0 \}_{\sH}$. It is straightforward to see that $T^2 + (-T^2) = 0|_{ \{0\}_{\sH}}$, whereas $T \left( T+(-T) \right) = 0|_{\dom(T)}$. Thus left-distributivity does {\bf not} hold in general.
\end{remark}

\begin{remark}
\label{rem:prod_ext}
Let $S, S_0,T, T_0$ be unbounded operators on $\sH$ with $S \subseteq S_0, T \subseteq T_0$, then it is straightforward to verify that, 
$$ST \subseteq S_0 T_0,\; TS \subseteq T_0 S_0, \;
S + T \subseteq S_0 + T_0.$$
\end{remark}

\begin{definition}
\label{def:ran_and_null_proj}
Let $T$ be an unbounded operator acting on $\sH$. We define, $$\nul (T) := \left\{ x \in \dom(T) : Tx = 0 \right\},\; \ran(T) := \left\{ Tx : x \in \dom(T) \right\}.$$
Since $\{ 0 \}_{\sH} \subseteq \nul (T)$, note that the nullspace of $T$ is non-empty. The projection onto the closed subspace $\overline{\ran(T)} \subseteq \sH$ is said to be the \emph{range projection} of $T$, and is denoted by $\raR(T)$. The projection onto the closed subspace $\overline{\nul(T)} \subseteq \sH$ is said to  be the \emph{null projection} of $T$, and is denoted by $\nulN(T)$.
\end{definition}

In \cite{kaufman}, in the course of his investigation of quotient representations of closed operators, Kaufman defines an `inverse' for arbitrary bounded operators which need not be one-to-one. Below we extend this operation to arbitrary unbounded operators and call it the {\it Kaufman inverse} in his honour. 

\begin{definition}[Kaufman Inverse]
\label{def:kaufman_inv}
Let $\sH$ be a Hilbert space and $T$ be an unbounded operator on $\sH$. Consider the unbounded operator, $T^{\dagger}$, on $\sH$, with $\dom(T^{\dagger}) := \ran(T)$, which is defined as follows, 
\[
    Tx \mapsto \big( I-\nulN(T) \big) x, \; (\text{for }x \in \dom(T)).
\]
This is clearly well-defined and we call $T^{\dagger}$ the {\it Kaufman inverse} of $T$. When $T$ is one-to-one, that is, $\nul (T) = \{ 0 \}_{\sH}$, it is easy to see that $T^{\dagger} = T^{-1}$ on $\ran(T)$. 

Clearly, $$\ran(T^{\dagger}) = \big( I - \nulN(T) \big) \dom(T) \subseteq \overline{\dom(T)} \cap \nul (T) ^{\perp}.$$ It is left to the reader to verify that $\ran(T^{\dagger})$ is a dense subspace of $\overline{\dom(T)} \cap \nul (T) ^{\perp}$. Thus $T^{\dagger} : \ran(T) \to \overline{\dom(T)} \cap \nul (T) ^{\perp}$ is a one-to-one mapping with dense range.
\end{definition}

\begin{remark}
\label{rem:kauf_inv_1}
Since a bounded operator $A \in \sB(\sH)$ is everywhere-defined, that is, $\dom(A) = \sH$, $A^{\dagger}$ gives a bijection between $\ran(A)$ and $\nul(A)^\perp$. Furthermore for a projection $E \in \sB (\sH)$, it is straightforward to see that $E^{\dagger}$ is the restriction of the identity operator to $\ran(E)$. It is helpful to keep these two observations in mind for later use in this paper (see Lemma \ref{lem:kauf_inv_closed}, Proposition \ref{prop:t_bar}, Theorem \ref{thm:uniqueness_phi_aff}).

The restriction of $0_{\sH}$ to $\{ 0 \}_{\sH}$ is said to be the {\it trivial affiliated operator}, and is equal to $0_{\sH}^{\dagger}$.
\end{remark}

Although the following lemma about closed operators is well-known and standard, we include its proof with a view towards Lemma \ref{lem:unbdd} which generalizes it to the context of affiliated operators.

\begin{lem}
\label{lem:unbdd_0}
\textsl{
Let $\sH$ be a Hilbert space. Let $T, A$ be closed operators on $\sH$ such that $\dom(A)$ is a closed subspace of $\sH$. Then $TA$ is a closed operator on $\sH$.
}
\end{lem}
\begin{proof}
By the closed graph theorem, the operator $A : \dom(A) \to \sH$ is bounded. In order to show that $TA$ is closed, it suffices to show that whenever $x_n \to 0$ in $\dom (TA)$ and $(TA)x_n \to y$, we have $y=0$. 

Let $x_n \to 0$ in $\dom(TA)$ with $(TA)x_n \to y$. Since $A$ is bounded, we have $Ax_n \to 0$ in $\dom(T)$. As $T$ is closed and $T(Ax_n) \to y$, we see that $y = 0$. Thus $TA$ is a closed operator.
\end{proof}

\begin{lem}
\label{lem:kauf_inv_closed}
\textsl{
Let $T$ be a closed operator on a Hilbert space $\sH$. 
\begin{itemize}
    \item[(i)] Then $$\ran(T^{\dagger}) = \dom(T) \cap \nul(T)^{\perp}.$$ Thus $T^{\dagger}$ gives a bijection between $\ran(T)$ and $\dom(T) \cap \nul(T)^{\perp}$. 
    \item[(ii)] Furthermore, $T^{\dagger}$ is closed and is the unique operator satisfying the following properties:
\begin{equation}
    T^\dagger T = (I - \nulN(T))|_{\dom(T)}, \; \; 
    TT^\dagger = I_{\sH} |_{\ran(T)},\; \;
    \dom(T^\dagger) = \ran(T).
    \label{eqn:char_of_Kaufman_inv}
\end{equation}
\end{itemize}
}
\end{lem}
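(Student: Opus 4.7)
The plan is to establish (i) by unwinding the definition of the Kaufman inverse and exploiting the fact that a closed operator has closed nullspace, and to establish (ii) by identifying $T^{\dagger}$ with the honest inverse of the injective restriction of $T$ to $\dom(T) \cap \nul(T)^{\perp}$, from which both closedness and the characterizing identities will fall out.

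For part (i), I will first note that since $T$ is closed, $\nul(T)$ is automatically closed in $\sH$ (limits of nullspace sequences remain in the domain and are annihilated by $T$), so $\nulN(T)$ is the orthogonal projection onto $\nul(T)$, and in particular $\nulN(T)\sH = \nul(T) \subseteq \dom(T)$. Definition \ref{def:kaufman_inv} then gives $\ran(T^{\dagger}) = (I - \nulN(T))\dom(T)$. For the inclusion $\ran(T^{\dagger}) \subseteq \dom(T) \cap \nul(T)^{\perp}$, I will observe that for $x \in \dom(T)$ the vector $(I - \nulN(T))x = x - \nulN(T)x$ is a difference of two vectors in $\dom(T)$ and lies in $\nul(T)^{\perp}$ by virtue of being in the range of $I - \nulN(T)$. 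For the reverse inclusion, I will simply remark that any $y \in \dom(T) \cap \nul(T)^{\perp}$ is its own preimage, since $\nulN(T)y = 0$ yields $y = (I - \nulN(T))y$. Injectivity of $T^{\dagger}$ is then immediate: if $T^{\dagger}(Tx) = T^{\dagger}(Tx')$, then $(I - \nulN(T))(x - x') = 0$ places $x - x'$ in $\nul(T)$ and hence forces $Tx = Tx'$.

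For part (ii), I will introduce the restriction $S := T|_{\dom(T) \cap \nul(T)^{\perp}}$. Closedness of $\nul(T)^{\perp}$ together with closedness of $T$ makes $S$ a closed operator by a routine limit argument, and $\nul(S) = \nul(T) \cap \nul(T)^{\perp} = \{0\}$ makes $S$ injective. Using the orthogonal decomposition $x = \nulN(T)x + (I - \nulN(T))x$ for $x \in \dom(T)$, together with $T\nulN(T)x = 0$, I will conclude that $\ran(S) = \ran(T)$ and that $T^{\dagger}$ coincides with $S^{-1}$ on $\ran(T)$. Since $S$ is closed and injective, $\graph{S^{-1}}$ is the image of the closed set $\graph{S}$ under the coordinate-swap on $\sH \oplus \sH$ and is therefore closed, delivering the closedness of $T^{\dagger}$.

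The three identities in \eqref{eqn:char_of_Kaufman_inv} will then follow by direct unwinding of the definition: for $x \in \dom(T)$, $T^{\dagger}Tx = (I - \nulN(T))x$ by definition, while for $y = Tx \in \ran(T)$, the vector $T^{\dagger}y = (I - \nulN(T))x$ lies in $\dom(T)$, so $TT^{\dagger}y = Tx - T\nulN(T)x = Tx = y$. Uniqueness is then free: any operator $U$ with $\dom(U) = \ran(T)$ and $UT = (I - \nulN(T))|_{\dom(T)}$ must agree with $T^{\dagger}$ at every vector of the form $Tx \in \ran(T)$, hence coincides with $T^{\dagger}$ as an operator. I do not foresee any genuine obstacle; the only conceptually important move is the identification $T^{\dagger} = S^{-1}$, which is what converts the somewhat set-theoretic definition of the Kaufman inverse into an honest operator-theoretic inverse and makes all the structural assertions transparent.
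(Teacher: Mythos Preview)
Your proposal is correct and follows essentially the same route as the paper: both establish (i) via the closedness of $\nul(T)$ and the decomposition $x = \nulN(T)x + (I-\nulN(T))x$, and both prove (ii) by identifying $T^{\dagger}$ with the inverse of the injective restriction $S = T|_{\dom(T)\cap\nul(T)^{\perp}}$ (the paper calls this $T_0$) and flipping its closed graph. The only cosmetic difference is that the paper obtains closedness of $T_0$ by writing $T_0 = T(I-\nulN(T))$ and invoking the lemma that $TA$ is closed for $T$ closed and $A$ bounded, whereas you argue it directly from closedness of $\nul(T)^{\perp}$; both are equally short and valid.
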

\begin{proof} 
\begin{itemize}[leftmargin=0.6cm, itemsep=0.2cm]
\item[(i)] Since $T$ is closed, $\nul(T)$ is a closed subspace of $\sH$ (see \cite[Exercise 2.8.45]{kr-I}). Let $x$ be a vector in $\dom(T)$. Note that $\nulN(T)x \in \overline{\nul(T)} = \nul(T)\subseteq \dom(T)$. Thus $\big( I - \nulN(T) \big) x\in \dom(T) \cap \nul(T)^\perp$. We conclude that $\ran(T^\dagger) \subseteq \dom(T) \cap \nul(T)^\perp$. For the reverse inclusion, note that $\ran \big( I - \nulN(T) \big) = \nul (T)^\perp$. Hence if $x \in \dom(T)\cap \nul(T)^\perp$, then $x = \big( I - \nulN(T) \big) x \in \ran(T^\dagger)$.

\item[(ii)] Let $T_0$ be the restriction of $T$ to the subspace $\ran(T^\dagger)=\dom(T)\cap \nul(T)^\perp$. Note that $T_0$ is a bijection from the space $\ran(T^\dagger)$ onto $\dom(T^\dagger)=\ran(T)$. It is then clear from the definition of $T^\dagger$ that $T_0^{-1} = T^\dagger$. 

Since $T$ is closed and $\dom \left( \left( I - \nulN(T) \right)^{\dagger} \right) = \nul(T)^{\perp}$ is a closed subspace of $\sH$, by Remark \ref{rem:kauf_inv_1} and Lemma \ref{lem:unbdd_0}, we have that $T_0 = T(I - \nulN(T))^{\dagger}$ is a closed operator. Moreover, $\ran(T_0) = \ran(T)$. Since $\graph{T_0 ^{-1}}$ is obtained by flipping coordinates of $\graph{T_0} \subseteq \sH \oplus \sH$, we note that $\graph{T_0^{-1}}$ is closed, that is, $T_{0}^{-1}$ is a closed operator. In summary, $T^{\dagger}$ is closed whenever $T$ is closed. The equations in (\ref{eqn:char_of_Kaufman_inv}) are a matter of unwrapping the definitions, and it is straightforward to see that they completely determine $T^{\dagger}$. \qedhere
\end{itemize}
\end{proof}

\begin{remark}
\label{rem:kauf_inv_2}
Let $A\in \Bh$. For any unbounded operator $T$ on $\sH$ satisfying 
$TA = I - \nulN(A)$, note that $T$ is an extension of $A^{\dagger}$. Indeed,  $\{x \in \sH: Ax \in \dom(T) \} = \dom(TA) = \sH$, that is, $\dom(A^{\dagger}) = \ran(A) \subseteq \dom(T)$. Also, $T(Ax) = (I-\nulN(A))x = A^{\dagger}(Ax)$ for every $x\in \sH$; in other words, $A^{\dagger}$ and $T$ agree on $\ran(A) = \dom(A^{\dagger})$. Thus $A^\dagger\subseteq T$. This algebraic observation will be helpful in the proof of Theorem \ref{thm:uniqueness_phi_aff_2}.
\end{remark}

\begin{remark}
\label{rem:cl_bdd}
An operator on $\sH$ (not necessarily everywhere-defined on $\sH$) is closed and bounded if and only if it is the restriction of an everywhere-defined bounded operator on $\sH$, that is, an operator in $\sB(\sH)$, to a closed subspace of $\sH$. This follows from the well-known fact that a densely-defined closed operator is bounded if and only it is everywhere-defined. We denote the set of closed and bounded operators on $\sH$ by $\sC\sB(\sH)$. In view of Remark \ref{rem:kauf_inv_1}, every operator in $\sC\sB(\sH)$ is of the form $AE^{\dagger}$ for $A, E \in \sB(\sH)$ where $E$ is a projection.
\end{remark}

\begin{definition}[Adjoint]
\label{def:adj_not_dd_op}
For an unbounded operator $T$ acting on the Hilbert space $\sH$, $T : \dom(T) \to \sH$, we define an unbounded operator on $\sH$, $T^* : \dom(T^*) \to \sH$, called the {\it adjoint of $T$}, as follows. Its domain is given by:
    \[
        \dom(T^*) = \{ y \in \sH : \text{ the mapping } x \mapsto \ipdt{Tx}{y} \big( x \in \dom(T) \big)  \text{ is continuous } \},
    \]
By the Riesz representation theorem (see \cite[Theorem 2.3.1]{kr-I}) on the Hilbert space $\overline{\dom(T)}$, for every $y \in \dom(T^*)$ there is a \emph{unique} $z \in \overline{\dom(T)}$ such that $\ipdt{Tx}{y} = \ipdt{x}{z}$ for all $x \in \dom(T)$. We define $T^*y=z$. Clearly, the range of $T^*$ lies in $\overline{\dom(T)}$.
\end{definition}

\noindent {\bf \large  Caution.} In the literature, the adjoint is typically defined for a densely-defined operator. As seen above, we may view $T$ as a densely-defined operator from the Hilbert space $\overline{\dom(T)}$ to the Hilbert space $\sH$ and make perfectly good sense of its adjoint. It is crucial to remember that in our definition, the codomain of $T^*$ is $\sH$ and {\bf not} $\overline{\dom(T)}$, though we have that $\ran(T^*) \subseteq \overline{\dom(T)}$. The reader may verify that the standard definition of adjoint (see \cite[Definition 2.7.4]{kr-I}) very much depends on the codomain Hilbert space as any vector in $\ran(T)^{\perp}$ must lie in the domain of $T^*$. We hope that this clarification will be helpful in interpreting the meaning of $T^{**}$ as the adjoint of $T^*$, and in verifying that there is no sleight-of-hand involved when we invoke standard results from the literature about the adjoint in our proofs.

Using the same line of argument given in \cite[Remark 2.7.6]{kr-I}, note that for every unbounded operator $T$ on $\sH$, its adjoint $T^*$ is closed on $\sH$.

\begin{prop}[cf. {\cite[Theorem 2.7.8]{kr-I}}]
\label{prop:kr_I_278}
\textsl{
If $T$ is a densely-defined transformation from a Hilbert space $\sK$ to a Hilbert space $\sH$, then
\begin{itemize}
    \item[(i)] $T$ is pre-closed if and only if $\dom(T^*)$ is dense in $\sH$,
    \item[(ii)] if $T$ is pre-closed, $\overline{T}=T^{**}$.
\end{itemize}
}
\end{prop}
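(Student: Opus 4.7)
The plan is to use the standard ``flip unitary'' trick that encodes the adjoint relationship as an orthogonal complement of graphs. Define the unitary $V : \sK \oplus \sH \to \sH \oplus \sK$ by $V(x, y) = (y, -x)$. Unwrapping the defining property of $T^*$, namely $\langle Tx, y \rangle = \langle x, T^* y \rangle$ for $x \in \dom(T)$ and $y \in \dom(T^*)$, one sees directly that
\[
\graph{T^*} \;=\; \big( V \graph{T} \big)^{\perp} \quad \text{in } \sH \oplus \sK.
\]
This is the fundamental identity; everything else follows from juggling orthogonal complements.

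For (i), recall that $T$ is pre-closed precisely when $\overline{\graph{T}}$ contains no pair $(0, y)$ with $y \neq 0$. Since $V$ is unitary, taking perpendiculars in the identity above and pulling back by $V^{-1}$ gives $\overline{\graph{T}} = V^{-1}\!\left(\graph{T^*}^{\perp}\right)$. A short direct check then shows that $(0, y) \in \overline{\graph{T}}$ iff $\langle y, z \rangle = 0$ for every $z \in \dom(T^*)$, i.e.\ iff $y \in \dom(T^*)^{\perp}$. Thus $T$ is pre-closed iff $\dom(T^*)^{\perp} = \{0\}$ iff $\dom(T^*)$ is dense in $\sH$.

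For (ii), assume $T$ is pre-closed, so by (i) we have $\dom(T^*)$ dense in $\sH$ and the adjoint $T^{**}$ is well-defined as an operator from $\sK$ to $\sH$. Apply the same fundamental identity to $T^*$: with the analogous flip $V' : \sH \oplus \sK \to \sK \oplus \sH$, $V'(y, x) = (x, -y)$, we get $\graph{T^{**}} = (V' \graph{T^*})^{\perp}$ in $\sK \oplus \sH$. A one-line composition shows $V' V = -\mathrm{id}$ on $\sK \oplus \sH$, hence
\[
V' \graph{T^*} \;=\; V'\big( V \graph{T} \big)^{\perp} \;=\; \big( V' V \graph{T} \big)^{\perp} \;=\; \graph{T}^{\perp},
\]
where we used that $V'$ is unitary and that $-\graph{T} = \graph{T}$ as linear subspaces. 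Taking perpendiculars gives $\graph{T^{**}} = \graph{T}^{\perp\perp} = \overline{\graph{T}} = \graph{\overline{T}}$, which yields $T^{**} = \overline{T}$.

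The main (and only) obstacle is bookkeeping: ensuring the flip unitary is defined between the correct Hilbert spaces at each stage and that the densely-defined hypothesis is genuinely needed so that the Riesz representation step in the definition of $T^*$ (and again of $T^{**}$) produces a well-defined operator. No analytic input beyond the Riesz representation theorem (already invoked in Definition \ref{def:adj_not_dd_op}) and the identification $\overline{W} = W^{\perp\perp}$ for a linear subspace $W$ of a Hilbert space is required.
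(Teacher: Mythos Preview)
Your proof is correct and is precisely the standard graph--orthogonal-complement argument. The paper does not supply its own proof of this proposition; it simply cites \cite[Theorem 2.7.8]{kr-I}, whose proof is essentially the one you have written.
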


\begin{prop}
\label{prop:t_bar}
\textsl{
Let $T$ be a pre-closed operator (not necessarily densely-defined) acting on the Hilbert space $\sH$. Then $T^*$ is densely-defined and we have $\overline{T} = T^{**} E^{\dagger}$ where $E$ is the projection onto $\overline{\dom(T)}$.  
}
\end{prop}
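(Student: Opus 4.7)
The plan is to reduce to the densely-defined case by viewing $T$ as an operator from $\sK := \overline{\dom(T)}$ to $\sH$, and then to reconcile the paper's choice of codomain $\sH$ for the adjoint with the natural choice of $\sK$. To begin, I would set $\sK := \overline{\dom(T)}$ with $E$ the projection of $\sH$ onto $\sK$; by Remark \ref{rem:kauf_inv_1}, $E^\dagger$ is the restriction of the identity to $\sK$, that is, the inclusion $\sK \hookrightarrow \sH$. Regarded as a densely-defined operator from $\sK$ to $\sH$, the operator $T$ is still pre-closed, since its graph lies in the closed subspace $\sK\oplus\sH$ of $\sH\oplus\sH$ and so closure in either ambient space yields the same result. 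Applying Proposition \ref{prop:kr_I_278} in this viewpoint yields that $\dom(T^*)$ is dense in $\sH$ and $\overline{T} = T_\circ^{**}$, where $T_\circ^*: \dom(T^*) \subseteq \sH \to \sK$ and $T_\circ^{**}: \dom(T_\circ^{**}) \subseteq \sK \to \sH$ denote the adjoint and double adjoint taken with their standard codomain choices.

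The crux is to relate the paper's $T^{**}$ to the standard $T_\circ^{**}$. The paper's adjoint $T^*$ and the standard adjoint $T_\circ^*$ share the same domain and values; they differ only in the declared codomain, with range contained in $\sK$ in both cases. Hence for any $y \in \sH$ and any $x \in \dom(T^*)$, $\ipdt{T^*x}{y} = \ipdt{T^*x}{Ey}$ since $T^*x$ annihilates $(I-E)y$. This identity shows that $y \in \dom(T^{**})$ if and only if $Ey \in \dom(T_\circ^{**}) = \dom(\overline{T})$, and in that case $T^{**} y = \overline{T}(Ey)$.

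To conclude, I would unwind $T^{**} E^\dagger$. Its domain consists of $y \in \sK$ such that $E^\dagger y = y$ lies in $\dom(T^{**})$; since $Ey = y$ for $y \in \sK$, this reduces to $y \in \sK \cap \dom(\overline{T})$, which is just $\dom(\overline{T})$ because $\overline{\graph{T}} \subseteq \sK \oplus \sH$ forces $\dom(\overline{T}) \subseteq \sK$. The action is $T^{**} E^\dagger y = T^{**} y = \overline{T}(Ey) = \overline{T}(y)$, giving $\overline{T} = T^{**} E^\dagger$. The only real obstacle is the bookkeeping around the paper's convention of always taking $\sH$ as the codomain of the adjoint; this creates a mild discrepancy between $T^{**}$ and the standard $T_\circ^{**}$, and the factor $E^\dagger$ on the right is precisely what restores the proper domain-alignment.
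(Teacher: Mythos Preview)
Your argument is correct. You and the paper both reduce to the densely-defined case and invoke Proposition~\ref{prop:kr_I_278}, but you do so by \emph{shrinking} the domain Hilbert space to $\sK = \overline{\dom(T)}$ and then reconciling the two codomain conventions for the adjoint, whereas the paper instead \emph{enlarges} $T$ to the densely-defined operator $TE = T \oplus 0|_{\dom(T)^\perp}$ on all of $\sH$, verifies directly that $(TE)^* = T^*$, applies Proposition~\ref{prop:kr_I_278} to obtain $\overline{TE} = (TE)^{**} = T^{**}$, and then restricts by $E^\dagger$ to recover $\overline{T}$. The paper's route sidesteps the codomain bookkeeping entirely by never leaving $\sH$; your route is arguably more conceptually transparent about \emph{why} the factor $E^\dagger$ appears, at the cost of the explicit comparison between $T^{**}$ and $T_\circ^{**}$.
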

\begin{proof}
By Proposition \ref{prop:kr_I_278}-(i) in the context of the Hilbert space $\sK = \overline{\dom(T)}$ and viewing $T$ as a pre-closed densely-defined operator from $\sK$ to $\sH$, we see that $T^*$ is densely-defined on $\sH$.

For pre-closed operators $T_1, T_2$, on Hilbert spaces $\sH_1, \sH_2$, respectively, it is straightforward to verify that $T_1 \oplus T_2$ is a pre-closed operator on $\sH_1 \oplus \sH_2$, and $\overline{T_1 \oplus T_2} = \overline{T_1} \oplus \overline{T_2}$.

Let $E$ denote the projection onto $\overline{\dom(T)}$. Clearly,
\[
\dom(TE) = \dom(T) \oplus \dom(T)^\perp, \text{ and } TE = T \oplus 0|_{\dom(T)^\perp}.
\]
Thus $TE$  is a pre-closed operator on the Hilbert space $\overline{\dom(T)} \oplus \dom(T)^{\perp}$ with closure $\overline{TE}= \overline{T}\oplus 0|_{\dom(T)^\perp}$. It follows that, $$\overline{T} = (\overline{T}\oplus 0|_{\dom(T)^\perp})E^\dagger = (\overline{TE})E^\dagger.$$ It may be directly verified from the definition of adjoint (Definition \ref{def:adj_not_dd_op}) that $(TE)^* = T^*$. Since $TE$ is densely-defined on $\sH$, using Proposition \ref{prop:kr_I_278} we see that $\overline{TE} = (TE)^{**} = T^{**}$. Hence,
\[
\overline{T} = (\overline{TE})E^\dagger = (TE)^{**} E^\dagger = T^{**} E^\dagger. \qedhere
\]
\end{proof}

\begin{prop}[Rank-nullity theorem]
\label{prop:rank_nullity}
\textsl{
For a densely-defined closed operator $T$ acting on $\sH$ we have:
    \[
        \raR(T)=I-\nulN(T^*), ~\nulN(T)=I-\raR(T^*).
    \]
}
\end{prop}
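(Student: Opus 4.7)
The plan is to derive both identities from the single fundamental orthogonality relation $\overline{\ran(T)}^{\perp} = \nul(T^*)$, which is the cleanest incarnation of rank-nullity duality for unbounded operators. Once this is established, both stated equalities fall out after translating orthogonal complements into projection identities.

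First I would verify that $\overline{\ran(T)}^{\perp} = \nul(T^*)$. For $y \in \sH$, the condition $y \in \overline{\ran(T)}^{\perp}$ is equivalent to $\ipdt{Tx}{y} = 0$ for every $x \in \dom(T)$. In that case, the functional $x \mapsto \ipdt{Tx}{y}$ on $\dom(T)$ is the zero functional, hence continuous, so $y \in \dom(T^*)$; moreover $\ipdt{x}{T^*y} = \ipdt{Tx}{y} = 0$ for all $x \in \dom(T)$, and since $\dom(T)$ is dense in $\sH$ (this is where we use the hypothesis that $T$ is densely-defined), we conclude $T^*y = 0$. The converse implication is immediate from $\ipdt{Tx}{y} = \ipdt{x}{T^*y}$. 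Note that the caution about the codomain convention in Definition \ref{def:adj_not_dd_op} causes no trouble here, since $\overline{\dom(T)} = \sH$.

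Next I would translate this set identity into the first projection identity. Since $T^*$ is closed, $\nul(T^*)$ is closed, so $\nulN(T^*)$ is the projection onto $\nul(T^*)$, while $\raR(T)$ is by definition the projection onto $\overline{\ran(T)}$. The orthogonal decomposition $\sH = \overline{\ran(T)} \oplus \nul(T^*)$ yields $\raR(T) + \nulN(T^*) = I$, i.e., $\raR(T) = I - \nulN(T^*)$.

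For the second identity, I would apply the first identity to $T^*$ in place of $T$. To do so, I need $T^*$ itself to be a densely-defined closed operator. Closedness of $T^*$ is part of our conventions. For density, since $T$ is closed it is pre-closed, and being densely-defined, Proposition \ref{prop:kr_I_278}-(i) gives $\overline{\dom(T^*)} = \sH$. Applying the already-proved identity to $T^*$ yields $\raR(T^*) = I - \nulN(T^{**})$. Finally, by Proposition \ref{prop:kr_I_278}-(ii), $T^{**} = \overline{T} = T$, so $\nulN(T^{**}) = \nulN(T)$, which rearranges to $\nulN(T) = I - \raR(T^*)$. There is no substantive obstacle; the only point requiring attention is confirming that the adjoint machinery developed in this paper (with its slightly nonstandard codomain convention) is compatible with the classical orthogonality relation, which is immediate because $T$ is densely-defined on all of $\sH$.
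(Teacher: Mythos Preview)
Your proof is correct and complete. The paper itself does not supply an argument here; it simply refers the reader to \cite[Exercise 2.8.45]{kr-I}, so your write-up is strictly more detailed than what appears in the source and follows the standard route one would take to solve that exercise.
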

\begin{proof}
See \cite[Exercise 2.8.45]{kr-I}.
\end{proof}
\begin{definition}
\label{def:char_mat}
For an unbounded linear operator $T$ on $\sH$, the {\it characteristic projection} of $T$, denoted $\rchi(T)$, is defined to be the orthogonal projection from $\sH \oplus \sH$ onto the closure of the graph of $T$. We may view $\rchi(T)$ as a $2 \times 2$ matrix of operators in $\sB(\sH)$.
\end{definition}

\begin{remark}
Let $T$ be a closed operator acting on the Hilbert space $\sH$ with  characteristic projection given by, 
$$\rchi(T)=\begin{bmatrix}
    E_{11} & E_{12}\\
    E_{21} & E_{22}
\end{bmatrix} \in M_2 \big( \sB(\sH) \big),$$
where $E_{ij} \in \sB(\sH)$ for $1 \le i, j \le 2$. Since $T$ is a closed operator, $\ran \big( \rchi(T) \big) = \graph{T}$, so that $\dom(T)$ is the set of first coordinates of points in $\graph{T}$. Note that, 
$$\rchi(T)\begin{bmatrix}
    x\\y
\end{bmatrix} = \begin{bmatrix}
            E_{11} & E_{12} \\
            E_{21} & E_{22}
        \end{bmatrix}
        \begin{bmatrix}
            x \\ y
        \end{bmatrix}=
        \begin{bmatrix}
            E_{11}x + E_{12}y\\
            E_{21}x + E_{22}y
        \end{bmatrix}, \text{ for }
\begin{bmatrix}
x\\
y
\end{bmatrix} \in \sH \oplus \sH.$$ Thus,
    \begin{equation}
    \label{eqn:dom_from_char_mat}
    \dom(T) = \ran(E_{11}) + \ran(E_{12}).
    \end{equation}
\end{remark}

\begin{definition}[MvN-Affiliated Operators]
\label{def:mvn_aff}
Let $\kM$ be a von Neumann algebra acting on the Hilbert space $\sH$. A closed operator $T$ on $\sH$ is said to be MvN-\emph{affiliated} with $\kM$ if $U^*TU = T$ holds for all unitary $U$ in $\kM'$, the commutant of $\kM$. As mentioned earlier, the set of all MvN-affiliated operators will be denoted by $\afm^{\text{MvN}}$.
\end{definition}

\begin{remark}
Let $T$ be an closed operator on $\sH$ and $U$ be a unitary such that $UT\subseteq TU$. Then parsing the definitions, one sees that $U(\dom(T))\subseteq \dom(T)$. It is left as an exercise to the reader to verify that the following three statements are equivalent, providing alternate ways of viewing the notion of MvN-affiliation.
    \begin{enumerate}
        \item $TU=UT$ for all $U$ unitary in $\kM'$.
        \item  $U^*TU = T$ for all $U$ unitary in $\kM'$, that is, $T\in \afm^{\text{MvN}}$.
        \item $UT\subseteq TU$ for all $U$ unitary in $\kM'$.
    \end{enumerate}
\end{remark}

\begin{lem}
\label{lem:unbdd}
\textsl{
Let $\mathscr{M}$ be a von Neumann algebra acting on the Hilbert space $\sH$. For $T \in \afm^{\text{MvN}}$ and $A\in \kM$, we have $T^\dagger, TA$ lie in $\afm^{\text{MvN}}$. Moreover, if $\ran(A) \subseteq \dom(T)$, then $TA\in \kM$.
}
\end{lem}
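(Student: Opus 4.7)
The plan is to treat the three claims separately. Throughout, I will use the equivalent formulation of MvN-affiliation noted in the remark after Definition \ref{def:mvn_aff}: $T \in \afm^{\text{MvN}}$ iff $UT \subseteq TU$ for every unitary $U \in \kM'$. Applying this to both $U$ and $U^*$ shows that each such $U$ preserves $\dom(T)$, and (from $UTx = TUx$ on $\dom(T)$) that $\nul(T)$ and $\ran(T)$ are also $U$-invariant.

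For $T^\dagger \in \afm^{\text{MvN}}$: closedness is immediate from Lemma \ref{lem:kauf_inv_closed}(ii). The key preliminary step is to observe that $\nulN(T) \in \kM$; this follows because $\nul(T)$ is a closed subspace (since $T$ is closed, by Proposition \ref{prop:rank_nullity} / the standard fact that closed operators have closed kernels) invariant under every unitary in $\kM'$, so its projection commutes with all unitaries in $\kM'$, and hence lies in $(\kM')' = \kM$ by the double commutant theorem. With $\nulN(T) \in \kM$ in hand, I would verify $UT^\dagger \subseteq T^\dagger U$ by a direct computation: for a typical element $y = Tx \in \ran(T) = \dom(T^\dagger)$, use $Uy = T(Ux) \in \ran(T)$ to see that $\dom(T^\dagger)$ is $U$-invariant, then compute $T^\dagger(Uy) = T^\dagger T(Ux) = (I - \nulN(T))(Ux) = U(I - \nulN(T))x = U T^\dagger y$, where the penultimate equality is precisely the commutation $\nulN(T) U = U \nulN(T)$ and the outer ones use the characterization from Lemma \ref{lem:kauf_inv_closed}.

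For $TA \in \afm^{\text{MvN}}$: closedness of $TA$ is a standard graph-convergence argument --- if $x_n \to x$ and $TAx_n \to y$, then $Ax_n \to Ax$ by boundedness of $A$, and closedness of $T$ forces $Ax \in \dom(T)$ with $T(Ax) = y$, so $x \in \dom(TA)$ and $(TA)x = y$. For the affiliation condition, I would fix a unitary $U \in \kM'$; since $A \in \kM$ commutes with $U$, for $x \in \dom(TA)$ one has $A(Ux) = U(Ax) \in U(\dom(T)) = \dom(T)$, so $Ux \in \dom(TA)$, and then $(TA)(Ux) = T(AUx) = T(UAx) = UT(Ax) = U(TA)x$.

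The final claim is essentially free: under $\ran(A) \subseteq \dom(T)$, $\dom(TA) = \sH$, so combining with the closedness just established, the closed graph theorem gives $TA \in \sB(\sH)$. The previous step then upgrades to actual equality $U(TA) = (TA)U$ for every unitary $U \in \kM'$, and since any von Neumann algebra is the norm-closed span of its unitaries, $TA$ commutes with every element of $\kM'$, so $TA \in (\kM')' = \kM$. The main place where care is needed is establishing $\nulN(T) \in \kM$, which is the linchpin of the $T^\dagger$ argument; beyond that, everything reduces to careful bookkeeping with domains.
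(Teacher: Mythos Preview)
Your proof is correct and follows essentially the same approach as the paper. Both arguments derive the commutation $U\nulN(T)=\nulN(T)U$ from the $U$-invariance of $\nul(T)$ and then verify $UT^\dagger=T^\dagger U$ via the identity $T^\dagger T=I-\nulN(T)$ on $\ran(T)$, and the treatment of $TA$ (graph-convergence for closedness, $UA=AU$ for affiliation, and the double commutant theorem for the bounded case) is likewise the same.
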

\begin{proof}
First note that by Lemma \ref{lem:kauf_inv_closed}-(ii), $T^{\dagger}$ is closed and by Lemma \ref{lem:unbdd_0}, $TA$ is closed.

Let $U$ be any unitary in $\kM'$, the commutant of $\kM$. Since $UT = TU$ and $U^*T = TU^*$, it is clear that $U \nul (T) = \nul (T)$ so that $U (\nulN(T)) = \nulN(T) U$.  For every $x\in \sH$, we have,
\[
(UT^\dagger)Tx = U \big( I-\nulN(T) \big)x = \big( I - \nulN(T) \big)Ux= T^\dagger TUx = (T^\dagger U)Tx.
\]
Since $\dom(T^{\dagger}) = \ran(T)$, we conclude that $UT^{\dagger} = T^{\dagger}U$. Hence, $T^{\dagger} \in \afm^{\text{MvN}}$.

Similarly, since $UT = TU$ and $UA = AU$, we have $U(TA) = TUA = (TA)U$. Thus $TA \in \afm^{\text{MvN}}$.

If, in addition, $\ran (A) \subseteq \dom(T)$, then $\dom(TA) = \sH$ so that $TA$ is a bounded operator in $(\kM ')' = \kM$.
\end{proof}

The following lemma is a workhorse in this article in our pursuit of functoriality.
\begin{lem}
\label{lem:morph_prop}
\textsl{
Let $(\mathscr{M}; \sH), (\mathscr{N}; \sK)$ be represented von Neumann algebras and $\Phi : \mathscr{M} \to \mathscr{N}$ be a unital normal $*$-homomorphism.  For $A \in \kM$, we have 
\begin{align*}
    \Phi \big( \raR(A) \big) &= \raR \big( \Phi(A) \big),\\
    \Phi \big( \nulN(A) \big) &= \nulN \big( \Phi(A) \big).
\end{align*}
}
\end{lem}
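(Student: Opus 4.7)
The strategy is to express both $\raR(A)$ and $\nulN(A)$ as spectral projections of bounded positive elements of $\kM$ obtained from $A$, and then invoke the fact that a unital normal $*$-homomorphism intertwines Borel functional calculus on self-adjoint elements.

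First I would verify the identities $\nul(A)=\nul(A^{*}A)$ and $\overline{\ran(A)}=\nul(A^{*})^{\perp}$. The first follows from $\langle A^{*}Ax,x\rangle=\|Ax\|^{2}$; the second is the rank--nullity theorem (Proposition \ref{prop:rank_nullity}), which applies since $A\in\sB(\sH)$ is a densely-defined closed operator. Consequently, as $A^{*}A, AA^{*}\in\kM$ are positive (hence self-adjoint), the projections $\nulN(A)$ and $\raR(A)$ may be recognised as the following spectral projections:
\[
\nulN(A)=\chi_{\{0\}}(A^{*}A),\qquad \raR(A)=I-\nulN(A^{*})=\chi_{(0,\infty)}(AA^{*}),
\]
both of which lie in $\kM$ by the double commutant theorem.

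Next I would invoke the standard fact that a unital normal $*$-homomorphism $\Phi:\kM\to\kN$ satisfies $\Phi\bigl(f(B)\bigr)=f\bigl(\Phi(B)\bigr)$ for every bounded Borel function $f$ on $\R$ and every self-adjoint $B\in\kM$. One obtains this in two stages: for polynomial (hence continuous) $f$ the identity is immediate from $\Phi$ being a $*$-homomorphism and $\|\Phi\|\le 1$; the passage to bounded Borel $f$ is then effected through monotone limits of spectral integrals, using that a normal $*$-homomorphism is $\sigma$-weakly continuous and sends bounded monotone nets of self-adjoint elements to bounded monotone nets with preserved suprema and infima. Since $\Phi(A^{*}A)=\Phi(A)^{*}\Phi(A)$ and $\Phi(AA^{*})=\Phi(A)\Phi(A)^{*}$, applying this to $f=\chi_{\{0\}}$ and $f=\chi_{(0,\infty)}$ respectively yields
\[
\Phi\bigl(\nulN(A)\bigr)=\chi_{\{0\}}\bigl(\Phi(A)^{*}\Phi(A)\bigr)=\nulN\bigl(\Phi(A)\bigr),
\]
\[
\Phi\bigl(\raR(A)\bigr)=\chi_{(0,\infty)}\bigl(\Phi(A)\Phi(A)^{*}\bigr)=\raR\bigl(\Phi(A)\bigr),
\]
as desired.

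There is no substantive obstacle; the only point requiring mild care is that $\chi_{\{0\}}$ and $\chi_{(0,\infty)}$ are merely Borel and not continuous, so one must use the Borel functional calculus rather than the continuous functional calculus, which is precisely where the normality hypothesis on $\Phi$ is essential. If preferred, the same conclusion can be reached concretely via the strong-operator identity $A^{*}A(A^{*}A+\tfrac{1}{n}I)^{-1}\to I-\nulN(A)$ (and analogously for $\raR(A)$ using $AA^{*}$), and by noting that $\Phi$ restricted to norm-bounded subsets is continuous in the strong-operator topology.
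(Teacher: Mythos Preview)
Your proposal is correct. The approach is close in spirit to the paper's but differs in presentation: you invoke the general fact that a unital normal $*$-homomorphism intertwines the Borel functional calculus and then apply it to the indicator functions $\chi_{\{0\}}$ and $\chi_{(0,\infty)}$, whereas the paper proceeds more concretely by first treating a positive contraction $H$ via the explicit monotone limit $H^{1/n}\uparrow \raR(H)$ (citing \cite[Lemma 5.1.5]{kr-I}), using normality of $\Phi$ to preserve that supremum, and then reducing the general case to this one through $\raR(A)=\raR(|A^*|)$. Your alternative suggestion at the end, using $A^*A(A^*A+\tfrac{1}{n}I)^{-1}\to I-\nulN(A)$ in the strong topology, is essentially the same device as the paper's, just with a different approximating sequence. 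The trade-off is that your main argument is cleaner conceptually but relies on the Borel-calculus intertwining result as a black box, while the paper's argument is more self-contained and elementary at the cost of a slightly longer computation.
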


\begin{proof}
Let $H$ be a positive contraction in $\mathscr{M}$, that is, $\| H \| \le 1$. Note that $\Phi(H)$ is a positive contraction in $\mathscr{N}$. From \cite[Lemma 5.1.5]{kr-I}, we note that $H^{\frac{1}{n}} \uparrow \raR(H)$ and $\Phi(H)^{\frac{1}{n}} \uparrow \raR \big( \Phi(H) \big)$. Using normality of $\Phi$, we have
$$\Phi \big( \raR(H) \big) = \Phi \big( \lim_{n \to \infty} H^{\frac{1}{n}} \big) = \lim_{n \to \infty} \Phi(H^{\frac{1}{n}}) = \lim_{n \to \infty} \Phi(H)^{\frac{1}{n}} = \raR \big( \Phi(H) \big).$$

If $A=0$, then the assertion is obvious. Thus we may assume that $A \in \kM$ is not the zero operator.
From Corollary \ref{cor:ran_aa_star} (or any proof of the polar decomposition theorem), for a bounded operator $A$ on a Hilbert space, we have $\ran(A) = \ran(|A^*|)$. Thus for $H := \frac{1}{\|A\|} |A^*|$ and $\Phi(H) = \frac{1}{\|A\|} |\Phi(A)^*|$, we get $$\raR(A) = \raR(H), \raR \big( \Phi(A) \big) = \raR \big( \Phi(H) \big).$$  Since $H$ is a positive contraction, by the discussion in the preceding paragraph, we conclude that $$\raR \big( \Phi(A) \big) = \raR \big( \Phi(H) \big) = \Phi \big( \raR(H) \big) = \Phi \big( \raR(A) \big).$$
Using the rank-nullity theorem (see Proposition \ref{prop:rank_nullity}), we have, 
\[
\Phi \big( \nulN(A) \big) = \nulN \big( \Phi(A) \big). \qedhere
\]
\end{proof}

\subsection{Near-semirings}
\label{subsec:near_semiring}
The concept of near-semirings was introduced in \cite[see \S 2]{rootselaar_1963} under the name `Fasthalbringe'. Unlike \emph{near-rings} for which a rich theory has already been developed, there does not appear to be a significant body of research in the direction of near-semirings. Even the definition of a near-semiring is not consistent in the literature, although the various definitions are closely related; They are sometimes referred to as {\it seminearrrings}. The definition we will use in this article is the one used in \cite[\S Introduction]{samman_1997}, \cite[pg. 714]{samman_2009}, and \cite[Definition 1.1]{kumar_krishna_2014}; We state it below.
 
\begin{definition}[Near-semiring]
\label{def:near-semiring}
    An algebraic structure $(S, +, \circ)$ with binary operations $(+, \circ)$ is said to be a \emph{right near-semiring} if it satisfies the following axioms: 
    \begin{enumerate}
        \item $(S, +)$ and $(S, \circ)$ are semigroups, and
        \item $(a+b) \circ c = a\circ c + b\circ c$ for all $a, b, c \in S$
    \end{enumerate}
We use the term {\it near-semiring} to mean {\it right near-semiring}. If instead of axiom (2), left-distributivity is assumed in the definition, then it is called a {\it left near-semiring}.

Let $S_1, S_2$ be near-semirings. A mapping $\varphi : S_1 \to S_2$ is said to be a {\it near-semiring homomorphism} if $\varphi(a+b) = \varphi(a) + \varphi(b)$ and $\varphi(a \circ b) = \varphi(a) \circ \varphi(b)$ for all $a, b \in S_1$.
\end{definition}

\begin{example}
    The set $\N \cup \{ 0 \}$ of all non-negative integers with the usual addition and multiplication is a near-semiring.
\end{example}

\begin{example}
\label{ex:m_gamma}
    Let $(\Gamma, +)$ is a semigroup and $M(\Gamma)$ denotes the set of all mappings from $\Gamma$ into $\Gamma$. For two mappings $f, g \in M(\Gamma)$ define the addition and multiplication as follows:
    \begin{align*}
        (f + g)(x) &:= f(x) + g(x) \text{, and }\\
        (f\circ g)(x) &:= f(g(x)),
    \end{align*}
    for all $x \in \Gamma$. Then $(M(\Gamma), + , \circ)$ is a near-semiring.
\end{example}

Example \ref{ex:m_gamma} does {\bf not} satisfy left-distributivity in general. In \S \ref{subsec:alg_str_m_aff}, we'll see that for a von Neumann algebra $\kM$, the set of affilitated operators (see Definition \ref{defn:M_aff}) is a near-semiring with respect to sum and product of (unbounded) operators, and left-distributivity is {\bf not} satisfied in general (see Remark \ref{rem:distributivity_operators}). 

As mentioned above, a literature review of the topic of near-semirings may lead to some confusion owing to a lack of consensus on the core definitions. Keeping this in mind, readers who wish to explore this topic further may refer to \cite{hoorn_willy_root_1967}, \cite{krishna_thesis_2005}, \cite{kumar_krishna_2014}, to get a sense of the motivation for the various defining sets of axioms for near-semirings.

\section{The Lattice of Affiliated Subspaces}%
\label{sec:aff_subspaces}%

In this section, we present the concept of \emph{affiliated subspaces} for a represented von Neumann algebra. Our key observation about the set of affiliated subspaces is that it forms a lattice under vector sum as {\it join} and set intersection as {\it meet}, and transforms functorially under unital normal $*$-homomorphisms between represented von Neumann algebras.  A crucial role in our discussion is played by the Douglas factorization lemma in the context of von Neumann algebras, which is why we begin with its study.

\subsection{The Douglas Factorization Lemma and $ \affs{\kM}$}

\begin{thm}[{cf.\ \cite{douglas_lemma_paper}, \cite[Theorem 2.1]{nayak_douglas_vNa}}]
\label{thm:douglas_vNa}
\textsl{
Let $\mathscr{M}$ be a von Neumann algebra acting on the Hilbert space $\sH$. Let $A, B \in \mathscr{M}$. Then the following conditions are equivalent:
\begin{itemize}
    \item[(i)] $\ran (A) \subseteq \ran (B)$;
    \item[(ii)] $B^{\dagger}A$ is a bounded operator and lies in $\kM$;
    \item[(iii)] $A=BX$ for some $X \in \mathscr{M}$;
    \item[(iv)] $AA^* \le \lambda ^2 BB^*$ for some $\lambda > 0$.
\end{itemize}
}
\end{thm}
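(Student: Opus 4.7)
My plan is to establish the cyclic chain (i) $\Rightarrow$ (ii) $\Rightarrow$ (iii) $\Rightarrow$ (iv) $\Rightarrow$ (i), exploiting the Kaufman inverse $B^{\dagger}$ as the natural candidate for the factor in any factorization $A = BX$. The entire argument is driven by the slogan: the unique sensible choice is $X := B^{\dagger}A$, and the preceding lemmas already contain everything needed to see that this choice works and is automatically in $\kM$.

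For (i) $\Rightarrow$ (ii), I would invoke Lemma \ref{lem:unbdd} directly. Since $B \in \kM \subseteq \afm^{\text{MvN}}$, Lemma \ref{lem:unbdd} gives $B^{\dagger} \in \afm^{\text{MvN}}$. Assuming (i), $\ran(A) \subseteq \ran(B) = \dom(B^{\dagger})$, so $\dom(B^{\dagger}A) = \sH$. A second application of Lemma \ref{lem:unbdd} — this time to $T = B^{\dagger}$ and the bounded operator $A$ with $\ran(A) \subseteq \dom(T)$ — delivers $B^{\dagger}A \in \kM$. For (ii) $\Rightarrow$ (iii), I would take $X := B^{\dagger}A$; the very hypothesis that $\dom(B^{\dagger}A) = \sH$ forces $\ran(A) \subseteq \ran(B)$, and then Lemma \ref{lem:kauf_inv_closed} gives $BB^{\dagger}y = y$ for every $y \in \ran(B)$, so applying this pointwise to $y = Ax$ yields $BXx = Ax$ for all $x \in \sH$. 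The implication (iii) $\Rightarrow$ (iv) is an immediate computation: $AA^* = BXX^*B^* \le \|X\|^2 BB^*$, so $\lambda = \|X\|$ works.

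The only step requiring genuine analytic work is (iv) $\Rightarrow$ (i), which is the classical half of Douglas's argument. From $AA^* \le \lambda^2 BB^*$ one reads off $\|A^*x\|^2 = \langle AA^*x, x \rangle \le \lambda^2 \langle BB^*x, x\rangle = \lambda^2 \|B^*x\|^2$ for every $x \in \sH$, so the assignment $B^*x \mapsto A^*x$ defines a bounded linear map on $\ran(B^*)$ with norm at most $\lambda$. Extending by continuity to $\overline{\ran(B^*)}$ and by zero on the orthogonal complement $\nul(B)$ produces $Y \in \sB(\sH)$ with $YB^* = A^*$; taking adjoints gives $A = BY^*$, whence $\ran(A) \subseteq \ran(B)$. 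Note that $Y^*$ need not lie in $\kM$ — but this is irrelevant, since we are only after the range inclusion, and the $\kM$-membership of a factor is recovered automatically upon re-entering the cycle through (i) $\Rightarrow$ (ii).

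I anticipate the main conceptual obstacle to be psychological rather than technical: one must resist the temptation to mimic the classical Douglas proof verbatim, which produces only a bounded factor in $\sB(\sH)$, and instead recognize that inside $\kM$ the Kaufman inverse supplies a canonical factor via $X = B^{\dagger}A$. Once Lemmas \ref{lem:kauf_inv_closed} and \ref{lem:unbdd} are in hand, the von Neumann algebra strengthening of Douglas's lemma requires no new analysis beyond the classical norm comparison step.
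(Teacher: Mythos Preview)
Your proposal is correct and follows essentially the paper's route for (i)$\Rightarrow$(ii)$\Rightarrow$(iii)$\Rightarrow$(iv). The one genuine difference is in how you close the cycle: the paper proves (iv)$\Rightarrow$(iii) directly, verifying via the commutant that the Douglas factor $D$ (defined by $B^*x\mapsto A^*x$ on $\overline{\ran(B^*)}$) is MvN-affiliated and then applying Lemma~\ref{lem:unbdd} to conclude $D\,\raR(B^*)\in\kM$; you instead prove only (iv)$\Rightarrow$(i), allowing the factor $Y^*$ to live merely in $\sB(\sH)$, and recover $\kM$-membership by re-entering the cycle at (i)$\Rightarrow$(ii). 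Your route is slightly more economical, avoiding the commutant check entirely and making explicit the philosophy you articulate at the end --- that the canonical $\kM$-factor is always $B^\dagger A$ once the range inclusion is in hand. One trivial quibble: in (iii)$\Rightarrow$(iv), choosing $\lambda=\|X\|$ fails when $X=0$ since the statement demands $\lambda>0$; the paper writes $\lambda>\|C\|$ to sidestep this, but of course $A=0$ is handled by any $\lambda>0$.
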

\begin{proof}
\noindent {\large (i) $\Rightarrow$ (ii)}. Since $\ran(A)\subseteq \ran(B) \subseteq \dom(B^\dagger)$ and $B^{\dagger} \in \afm^{\text{MvN}}$, by Lemma \ref{lem:unbdd}, $X = B^{\dagger}A$ is in $\mathscr{M}$. 

\vskip 0.1in
\noindent {\large (ii) $\Rightarrow$ (iii).} Let $X := B^{\dagger}A \in \kM$. Since $BB^{\dagger} : \ran(B) \to \ran(B)$ is the identity mapping and $\ran (A) \subseteq \ran(B)$, we conclude that $A = (BB^{\dagger})A = BX$.

\vskip 0.1in
\noindent {\large (iii) $\Rightarrow$ (i)}. Straightforward to see.

\vskip 0.1in
\noindent {\large (iii) $\Rightarrow$ (iv)}. Since $AA^* = BXX^*B^* \le \|X\|^2BB^*$, we may choose $\lambda > \|X\|$.

\vskip 0.1in

\noindent {\large (iv) $\Rightarrow$ (iii)}. For all $x \in \sH$, we have $\|A^*x\| \le \lambda \|B^*x\|$, so that whenever $B^*x = 0$, we have $A^*x = 0$. Thus the mapping $D : \ran(B^*) \to \ran(A^*)$ given by $D(B^*x) = A^*x$ is a well-defined bounded linear mapping. We extend the definition of $D$ to $\overline{\ran(B^*)}$ by continuity. 

Let $U$ be a unitary operator in $\mathscr{M}'$ so that $U$ commutes with $A^*, B^*$. For every $x \in \sH$, we have $$DUB^*x = DB^*Ux = A^*Ux = UA^*x = UDB^*x,$$ so that $D \in \afm^{\text{MvN}}$.  By Lemma \ref{lem:unbdd}, $D\,\raR(B^*) \in \mathscr{M}$. For $X  = \big( D \raR(B^*) \big)^* \in \kM$, we have $A^* = X^*B^*$ so that $A = BX$.
\end{proof}

\begin{cor}
\label{cor:ran_aa_star}
\textsl{
Let $\sH$ be a Hilbert space and $A \in \sB (\sH)$. Then $\ran(A)=\ran(|A^*|)$.
}
\end{cor}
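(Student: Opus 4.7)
The plan is to apply Theorem \ref{thm:douglas_vNa} twice, in both directions, with $\kM = \sB(\sH)$. The whole corollary is a one-line consequence of the equivalence (i)$\Leftrightarrow$(iv) in the Douglas factorization lemma, once one notes that $|A^*|$ is self-adjoint and $|A^*|^2 = AA^*$.

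More explicitly, first I would record the identity
\[
AA^* \;=\; |A^*|^{2} \;=\; |A^*|\,|A^*|^{*}.
\]
Reading this as $AA^* \le 1^2 \cdot |A^*|\,|A^*|^*$, the implication (iv)$\Rightarrow$(i) of Theorem \ref{thm:douglas_vNa} (with $B$ replaced by $|A^*|$) yields $\ran(A) \subseteq \ran(|A^*|)$. Reading the same identity as $|A^*|\,|A^*|^* \le 1^2\cdot AA^*$ and now applying (iv)$\Rightarrow$(i) with the roles of $A$ and $B$ swapped (i.e.\ with $A$ replaced by $|A^*|$ and $B$ replaced by $A$), we obtain $\ran(|A^*|) \subseteq \ran(A)$.

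Combining the two inclusions gives $\ran(A) = \ran(|A^*|)$, completing the proof. There is essentially no obstacle here: the only subtlety is noticing that applying Douglas in both directions is legitimate because the inequality $AA^* = |A^*|\,|A^*|^*$ is an equality, so both $\lambda = 1$ estimates trivially hold.
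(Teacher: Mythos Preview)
Your proof is correct and is essentially the same as the paper's own argument: the paper sets $B = |A^*|$, notes $AA^* = BB^*$, and then invokes the equivalence (i)$\Leftrightarrow$(iv) of Theorem~\ref{thm:douglas_vNa}. You have simply unpacked both directions of that equivalence explicitly, which is exactly what the paper's terse sentence is pointing to.
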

\begin{proof}
Consider the self-adjoint operator $B=|A^*|$. Clearly, $AA^* = BB^*$. The assertion follows from the equivalence ``$(i) \iff (iv)$'' in the statement of Theorem \ref{thm:douglas_vNa}.
\end{proof}

\begin{prop}
    \label{prop:douglas_soln}
\textsl{
Let $\kM$ be a von Neumann algebra acting on the Hilbert space $\sH$ and let $A, B\in \kM$ satisfy $\ran(A) \subseteq \ran(B)$. Then $B^{\dagger}A$ is the unique operator $X$ in $\kM$ satisfying $BX = A$ and $\raR(X) \le \raR(B^*)$.
}
\end{prop}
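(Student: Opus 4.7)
The plan is to verify that $X := B^{\dagger}A$ satisfies the two stated properties, and then establish uniqueness by subtracting two candidate solutions and showing the difference must vanish. From the implication ``(i)$\Rightarrow$(ii)$\Rightarrow$(iii)'' in Theorem \ref{thm:douglas_vNa}, we already know $X \in \kM$ and $BX = A$, so the existence part reduces to verifying $\raR(X) \le \raR(B^*)$. For this, I would invoke the description of the Kaufman inverse from Definition \ref{def:kaufman_inv} (and the elementary fact recorded there that $\ran(B^{\dagger}) \subseteq \nul(B)^{\perp}$), combined with the rank-nullity theorem (Proposition \ref{prop:rank_nullity}) in the form $\nul(B)^{\perp} = \overline{\ran(B^*)}$. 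Thus $\ran(X) = \ran(B^{\dagger} A) \subseteq \ran(B^{\dagger}) \subseteq \overline{\ran(B^*)}$, and taking the projection onto the closure yields $\raR(X) \le \raR(B^*)$.

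For uniqueness, suppose $Y \in \kM$ also satisfies $BY = A$ and $\raR(Y) \le \raR(B^*)$. Then $B(Y - X) = 0$, which means $\ran(Y - X) \subseteq \nul(B)$. Using rank-nullity once more, $\nul(B) = I - \raR(B^*)$, i.e., $\nul(B) = \overline{\ran(B^*)}^{\perp}$. On the other hand, since $\raR(X), \raR(Y) \le \raR(B^*)$, both $\overline{\ran(X)}$ and $\overline{\ran(Y)}$ are contained in $\overline{\ran(B^*)}$, whence $\ran(Y - X) \subseteq \ran(Y) + \ran(X) \subseteq \overline{\ran(B^*)}$. Combining the two inclusions gives
\[
\ran(Y - X) \;\subseteq\; \overline{\ran(B^*)} \cap \overline{\ran(B^*)}^{\perp} \;=\; \{0\},
\]
so $Y = X$.

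There is no real obstacle here: once one remembers that $\ran(B^{\dagger})$ lands in $\nul(B)^{\perp}$ by construction and translates this via rank-nullity, both existence and uniqueness fall out of a one-line orthogonality argument. The only mildly subtle point is the bookkeeping between the \emph{operator range} $\ran(B^*)$ and its closure $\overline{\ran(B^*)}$ (i.e., the image of $\raR(B^*)$), but this is resolved by passing to closures throughout.
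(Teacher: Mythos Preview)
Your proof is correct and follows essentially the same route as the paper's. The existence argument is identical (both invoke $\ran(B^{\dagger}) \subseteq \nul(B)^{\perp} = \overline{\ran(B^*)}$); for uniqueness, the paper takes adjoints and shows $X_1^* = X_2^*$ separately on $\ran(B^*)$ and on $\ran(B^*)^{\perp} = \nul(B)$, whereas you work directly with $\ran(Y-X)$ and the same orthogonal decomposition---a cosmetic difference only.
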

\begin{proof}
Note that the condition, $\raR(X) \le \raR(B^*)$, is equivalent to the condition, $\ran(X) \subseteq \overline{\ran(B^*)}$. By taking the orthogonal complement of both sides, we see that $$\nul (B) \subseteq \nul (X^*).$$

Let $X_1, X_2$ be two operators satisfying the given conditions. Then we get $X_1^*B^* = A^* = X_2^* B^*$. Thus $X_1 ^*= X_2^*$ on $\ran(B^*)$.
If $x\in \ran(B^*)^\perp=\nul(B) \subseteq \nul (X_1^*) \cap \nul (X_2^*)$, then $X_1^*x = 0 = X_2^*x$. Thus $X_1^* = X_2^*$, or equivalently, $X_1 = X_2$. This proves the uniqueness of such $X$ (if one exists).

As seen in the proof of Theorem \ref{thm:douglas_vNa}, clearly $B(B^{\dagger}A) = A$, and from the definition of the Kaufman-inverse, we have
\[
\ran (B^{\dagger}A) \subseteq \ran(B^{\dagger}) \subseteq \nul (B)^{\perp} = \overline{\ran(B^*)}. \qedhere
\]
\end{proof}

\begin{lem}
\label{lem:phi_pres_douglas}
\textsl{
Let $(\kM; \sH), (\kN; \sK)$ be represented von Neumann algebras and $\Phi:\kM\to \kN$ be a unital normal $*$-homomorphism. Let $A, B \in \kM$ with $\ran(A) \subseteq \ran(B)$. Then we have:
\begin{itemize}
    \item[(i)] $\ran \big( \Phi(A) \big) \subseteq \ran \big( \Phi(B) \big)$;
    \item[(ii)] $\Phi \left( B^{\dagger}A \right) = \Phi(B)^{\dagger} \Phi(A)$ in $\kN$.
\end{itemize}
}
\end{lem}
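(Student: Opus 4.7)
My plan is to reduce both parts to the characterizations of Douglas factorization already established in this section, and then invoke Lemma \ref{lem:morph_prop} to push range projections through $\Phi$.

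For part (i), I would use the equivalence ``(i) $\iff$ (iii)'' in Theorem \ref{thm:douglas_vNa}: since $\ran(A) \subseteq \ran(B)$, there is some $X \in \kM$ with $A = BX$. Applying the $*$-homomorphism $\Phi$ yields $\Phi(A) = \Phi(B) \Phi(X)$ in $\kN$, and another application of the same equivalence (now in $\kN$) gives $\ran \big( \Phi(A) \big) \subseteq \ran \big( \Phi(B) \big)$. Alternatively, one could start from the inequality $AA^* \le \lambda^2 BB^*$ and apply $\Phi$, using that $*$-homomorphisms are positive. Either route is a couple of lines; this part is essentially free.

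For part (ii), the key idea is to invoke the uniqueness clause of Proposition \ref{prop:douglas_soln} in $\kN$. By part (i), we already know $\ran \big( \Phi(A) \big) \subseteq \ran \big( \Phi(B) \big)$, so $\Phi(B)^{\dagger} \Phi(A)$ is the unique element $Y \in \kN$ satisfying $\Phi(B) Y = \Phi(A)$ and $\raR(Y) \le \raR \big( \Phi(B)^* \big)$. I would verify that $\Phi(B^{\dagger} A)$ meets both conditions. The first is immediate: applying $\Phi$ to $B (B^{\dagger} A) = A$ gives $\Phi(B) \cdot \Phi(B^{\dagger} A) = \Phi(A)$. For the second condition, Proposition \ref{prop:douglas_soln} tells us $\raR(B^{\dagger} A) \le \raR(B^*)$, and by Lemma \ref{lem:morph_prop},
\[
\raR \big( \Phi(B^{\dagger} A) \big) = \Phi \big( \raR(B^{\dagger} A) \big) \le \Phi \big( \raR(B^*) \big) = \raR \big( \Phi(B)^* \big),
\]
since $\Phi$ preserves both range projections and the order. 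By uniqueness in Proposition \ref{prop:douglas_soln}, $\Phi(B^{\dagger} A) = \Phi(B)^{\dagger} \Phi(A)$.

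I do not foresee any real obstacles; the proof is essentially an exercise in unwinding the previous two results of this subsection, with Lemma \ref{lem:morph_prop} supplying the one non-trivial ingredient (that range projections commute with $\Phi$). The mild subtlety is simply remembering that the characterization in Proposition \ref{prop:douglas_soln} only applies once the range-inclusion hypothesis is verified in $\kN$, which is why part (i) must be proven first.
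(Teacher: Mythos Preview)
Your proposal is correct and essentially identical to the paper's proof: the paper sets $X := B^{\dagger}A$, uses $\Phi(B)\Phi(X) = \Phi(A)$ for part (i), and then applies Lemma \ref{lem:morph_prop} to push the range-projection inequality $\raR(X) \le \raR(B^*)$ through $\Phi$, concluding via the uniqueness clause of Proposition \ref{prop:douglas_soln} exactly as you do.
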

\begin{proof}
Let $X:=B^{\dagger}A$. As noted in Proposition \ref{prop:douglas_soln}, $X \in \kM$, $BX=A$ and $\raR(X) \le \raR(B^*)$. 
\vskip 0.1in

\begin{itemize}[leftmargin=0.7cm, itemsep=0.2cm]
    \item[(i)] Since $\Phi(B) \Phi(X) = \Phi(A)$, we see that $\ran \big( \Phi(A) \big) \subseteq \ran \big( \Phi(B) \big)$. 
    \item[(ii)] Since $\raR \big( \Phi(X) \big) \le \raR \big( \Phi(B)^* \big)$ (by Lemma \ref{lem:morph_prop}), Proposition \ref{prop:douglas_soln} tells us that,
    \[
    \Phi(X) = \Phi(B)^{\dagger}\Phi(A).   \qedhere 
    \]
\end{itemize}
\end{proof}

\begin{thm}[cf. {\cite[Theorem 2.2, Corollary 2]{fillmore-williams}}]
\label{thm:boxplusdot}
\textsl{
Let $\kM$ be a von Neumann algebra acting on the Hilbert space $\sH$. Let $A, B$ be operators in $\kM$ and $\fT_{A, B}$ denote the operator in $M_2(\kM)$ as defined in \S \ref{sec:prelim}. Then we have the following.
\begin{itemize}[leftmargin=0.8cm, itemsep=0.2cm]
    \item[(i)] The range of the operator $|\fT_{A,B}^*|_{\raisemath{-2pt}{11}} \in \kM$ is $\ran(A) + \ran(B)$.
    \item[(ii)] The range of the operator $\sqrt{A\;\nulN (\fT_{A, B})_{\raisemath{-2pt}{11}}A^*} \in \kM$ is $\ran(A) \cap \ran(B)$.
    \item[(iii)] The range of the operator $\sqrt{\nulN (\fT_{A, B})_{\raisemath{-2pt}{11}}} \in \kM$ is $A^{-1}\Big( \ran(B) \Big)$. 
\end{itemize}
}
\end{thm}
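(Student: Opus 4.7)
The overall plan is to translate each of the three range equalities into an application of Corollary \ref{cor:ran_aa_star} (the identity $\ran(X) = \ran(\sqrt{XX^*})$) for a suitably chosen operator $X \in M_2(\kM)$, and then to read off the desired identity from the $(1,1)$ block. For (i), I will first compute $\fT_{A,B}\fT_{A,B}^* = \begin{bmatrix} AA^* + BB^* & 0 \\ 0 & 0 \end{bmatrix}$, so that $|\fT_{A,B}^*| = \begin{bmatrix} \sqrt{AA^* + BB^*} & 0 \\ 0 & 0 \end{bmatrix}$ and in particular $|\fT_{A,B}^*|_{11} = \sqrt{AA^* + BB^*}$. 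Applying Corollary \ref{cor:ran_aa_star} to $X = \fT_{A,B}$ then gives $\ran(\fT_{A,B}) = \ran(|\fT_{A,B}^*|)$; the left-hand side is easily computed to be $(\ran(A) + \ran(B)) \oplus \{0\}$ and the right-hand side is $\ran(\sqrt{AA^* + BB^*}) \oplus \{0\}$, so comparison of first components yields (i).

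For (iii), set $P := \nulN(\fT_{A,B})$. Since $\fT_{A,B}$ is bounded, $\nul(\fT_{A,B})$ is already closed, so $P$ projects onto the subspace $\{(x,y) \in \sH \oplus \sH : Ax = By\}$; the set of its first coordinates is exactly $A^{-1}(\ran(B))$. Let $E_1 := \begin{bmatrix} I & 0 \\ 0 & 0 \end{bmatrix} \in M_2(\kM)$, so that $\ran(E_1 P) = A^{-1}(\ran(B)) \oplus \{0\}$; using $P = P^* = P^2$, we compute $(E_1 P)(E_1 P)^* = E_1 P E_1 = \begin{bmatrix} P_{11} & 0 \\ 0 & 0 \end{bmatrix}$. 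Corollary \ref{cor:ran_aa_star} then gives $\ran(E_1 P) = \ran(\sqrt{P_{11}}) \oplus \{0\}$, and comparing first components yields (iii).

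Finally, for (ii) I will apply Corollary \ref{cor:ran_aa_star} once more, this time to $X := A\sqrt{P_{11}} \in \kM$. Since $XX^* = A P_{11} A^*$, this yields $\ran(\sqrt{A P_{11} A^*}) = \ran(A \sqrt{P_{11}}) = A\bigl(\ran(\sqrt{P_{11}})\bigr) = A\bigl(A^{-1}(\ran(B))\bigr) = \ran(A) \cap \ran(B)$, where the penultimate equality invokes (iii). The fact that all constructed operators (square roots, null projections, and their matrix entries) lie in $\kM$ or $M_2(\kM)$ as required is automatic by Borel functional calculus, so the passage from $\sB(\sH)$ to the general von Neumann algebra setting introduces no additional difficulty; the main care throughout is purely bookkeeping of matrix entries and correctly identifying which $(1,1)$-block is being extracted at each step.
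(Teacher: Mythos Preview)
Your proof is correct and follows essentially the same strategy as the paper's: compute the relevant range directly for a suitable $2\times 2$ block operator, then invoke Corollary~\ref{cor:ran_aa_star} in $M_2(\kM)$ to extract the $(1,1)$-block identity. The only minor difference is in part~(ii): the paper applies Corollary~\ref{cor:ran_aa_star} to $\begin{bmatrix} A & 0 \\ 0 & 0 \end{bmatrix}\nulN(\fT_{A,B})$ in $M_2(\kM)$ directly, whereas you first establish~(iii) and then deduce~(ii) by applying Corollary~\ref{cor:ran_aa_star} to $A\sqrt{P_{11}}$ in $\kM$ itself together with the set-theoretic identity $A\bigl(A^{-1}(\ran(B))\bigr) = \ran(A)\cap\ran(B)$ --- a harmless reordering of the same ingredients.
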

\begin{proof}
\begin{itemize}[leftmargin=0.8cm, itemsep=0.2cm]
    \item[(i)] By applying $\fT_{A, B}$ to vectors in $\sH \oplus \sH$, it is straightforward to see that $$\ran(\fT_{A, B}) = \big( \ran(A) + \ran(B) \big) \oplus \{0 \}_{\sH}.$$ Note that, 
\[
|\fT_{A,B}^*| = \begin{bmatrix}
\sqrt{AA^*+BB^*} & 0\\
0 & 0
\end{bmatrix}.
\]
From Corollary \ref{cor:ran_aa_star} in the context of the von Neumann algebra $M_2(\kM)$, we have 
\begin{align*}
\big( \ran(A) + \ran(B) \big) \oplus \{ 0 \}_{\sH} &= \ran(\fT_{A, B}) = \ran(|\fT_{A, B}^*|) \\
&= \ran(\sqrt{AA^*+BB^*}) \oplus \{ 0 \}_{\sH}.
\end{align*}

    \item[$\frac{\text{(ii)}}{\text{(iii)}}$] Note that for $x, y \in \sH$, the vector $(x, y) \in \sH \oplus \sH$ is in $\nul (\fT_{A, B})$ if and only if $Ax = By$. Thus $x$ is the first coordinate of a vector in $\nul (\fT_{A, B})$ if and only if $Ax \in \ran(B)$ or equivalently, $Ax \in \ran(A) \cap \ran(B)$. Thus,

\begin{align*}
\ran \left( \begin{bmatrix}
A & 0\\
0 & 0
\end{bmatrix} \nulN(\fT_{A, B}) \right) &= \left( \ran(A) \cap \ran(B) \right) \oplus \{ 0 \}_{\sH}\\
\ran \left( \begin{bmatrix}
I & 0\\
0 & 0
\end{bmatrix} \nulN(\fT_{A, B}) \right) &= A^{-1} \big( \ran(B) \big) \oplus \{ 0 \}_{\sH}.
\end{align*}

Since 
\begin{align*}
\begin{bmatrix}
A & 0\\
0 & 0
\end{bmatrix} \nulN(\fT_{A, B}) 
\begin{bmatrix}
A^* & 0\\
0 & 0
\end{bmatrix} &= \begin{bmatrix}
A\nulN(\fT_{A, B})_{11} A^* & 0\\
0 & 0
\end{bmatrix}\\
\begin{bmatrix}
I & 0\\
0 & 0
\end{bmatrix} \nulN(\fT_{A, B})
\begin{bmatrix}
I & 0\\
0 & 0
\end{bmatrix} &= \begin{bmatrix}
\nulN(\fT_{A, B})_{11} & 0\\
0 & 0
\end{bmatrix},
\end{align*}
using Corollary \ref{cor:ran_aa_star} in the context of $M_2(\kM)$, we conclude that,
\begin{align*}
\Big( \ran(A) \cap \ran(B) \Big) \oplus \{ 0 \}_{\sH} &= \ran \left( \sqrt{A\;\nulN (\fT_{A, B})_{\raisemath{-2pt}{11}}A^*} \right) \oplus \{ 0 \}_{\sH},\\
A^{-1}\Big(  \ran(B) \Big) \oplus \{ 0 \}_{\sH} &= \ran \left( \sqrt{\;\nulN (\fT_{A, B})_{\raisemath{-2pt}{11}}} \right) \oplus \{ 0 \}_{\sH}.   \qedhere
\end{align*}
\end{itemize}

\end{proof}

\begin{remark}
\label{rem:aff_lat}
Let $\kM$ be a von Neumann algebra acting on the Hilbert space $\sH$ and $A, B \in \kM$. In the context of Theorem \ref{thm:boxplusdot}, we denote the operator in parts (i), (ii), (iii), respectively, by $A \boxplus B$, $A \boxdot B$, $A \invran B$, respectively. Thus the ranges of the operators $A \boxplus B, A \boxdot B$ and $A \invran B$ in $\kM$  are given by:
\begin{itemize}
    \item[(i)] $\ran(A \boxplus B) = \ran(A) + \ran(B)$;
    \item[(ii)] $\ran(A \boxdot B) = \ran(A) \cap \ran(B)$;
    \item[(iii)] $\ran(A \invran B) = A^{-1} \left( \ran(B) \right)$.
\end{itemize}
In fact, from the proof of Theorem \ref{thm:boxplusdot}, we have an explicit formula for $A \boxplus B$, given by $\sqrt{AA^* + BB^*}$.
\end{remark}

\subsection{Functoriality of the Construction $\kM \mapsto \affs{\kM}$}

\begin{definition}
\label{def:aff_sub}
A subspace $\sV \subseteq \sH$ is said to be {\it affiliated} to $\kM$ if there is an operator in $\kM$ whose range is $\sV$. We denote the set of affiliated subspaces for $\kM$, or equivalently, the set of operator ranges for $\kM$, by $\affs{\kM}$.
\end{definition}

The rationale for the above nomenclature will be clear from Theorem \ref{thm:dom_ran} where it will be shown that $\affs{\kM}$ is precisely the set of domains of affiliated operators. It is well-known that the set of ranges of operators in $\sB(\sH)$ is a lattice under vector addition as join and set intersection as meet (see \cite{mackey1946domains}, \cite{dixmier1949}, \cite{fillmore-williams}). In Lemma \ref{lem:lat_of_ran} below, we show an analogous result in the context of von Neumann algebras.

\begin{lem}
	\label{lem:lat_of_ran}
\textsl{
Let $\mathscr{M}$ be a von Neumann algebra acting on the Hilbert space $\sH$.  
\begin{itemize}
    \item[(i)] For affiliated subspaces $\sV, \sW \in \affs{\kM}$, we have $\sV+ \sW, \sV \cap \sW \in \affs{\kM}$; in other words, $\text{Aff}_{s}(\mathscr{M})$ is a lattice under vector addition ($+$) and set intersection ($\cap$).
    \item[(ii)] For an affiliated subspace $\sV \in \affs{\kM}$, we have $\overline{\sV}, \sV + \sV^{\perp}$ are also affiliated subspaces in $ \affs{\kM}$. 
\end{itemize} 
}
\end{lem}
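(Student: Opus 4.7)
The plan is to reduce everything directly to Theorem \ref{thm:boxplusdot} (equivalently, Remark \ref{rem:aff_lat}), which already furnishes, for operators $A, B \in \kM$, explicit operators $A \boxplus B, A \boxdot B \in \kM$ whose ranges are $\ran(A) + \ran(B)$ and $\ran(A) \cap \ran(B)$ respectively. The conceptual content of the lemma is essentially packaged in that theorem; what remains is to unpack it in the language of affiliated subspaces and to handle the closure and orthogonal-complement operations in part (ii).

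For part (i), given $\sV, \sW \in \affs{\kM}$, the definition of an affiliated subspace provides operators $A, B \in \kM$ with $\ran(A) = \sV$ and $\ran(B) = \sW$. Then the operators $A \boxplus B$ and $A \boxdot B$ lie in $\kM$ and have ranges $\sV + \sW$ and $\sV \cap \sW$ respectively by Remark \ref{rem:aff_lat}, which places both subspaces in $\affs{\kM}$. Because $+$ is clearly commutative and associative and $\cap$ satisfies the usual lattice axioms on subspaces, and because absorption is automatic at the level of subspaces (every sum contains each summand and every intersection is contained in each factor), $\affs{\kM}$ is indeed a lattice under these operations.

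For part (ii), pick $A \in \kM$ with $\ran(A) = \sV$. The closure of $\sV$ is the range of the range projection: $\overline{\sV} = \overline{\ran(A)} = \ran(\raR(A))$, and $\raR(A) \in \kM$, so $\overline{\sV} \in \affs{\kM}$. For the second assertion, observe that $\sV^{\perp} = \ran(A)^{\perp} = \overline{\ran(A)}^{\perp} = \ran(I - \raR(A))$, and $I - \raR(A) \in \kM$, so $\sV^{\perp} \in \affs{\kM}$ as well. Applying part (i) to $\sV$ and $\sV^{\perp}$ then shows that $\sV + \sV^{\perp} \in \affs{\kM}$; explicitly, it is the range of $A \boxplus (I - \raR(A))$.

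There is no substantive obstacle here beyond correctly identifying the affiliated operators that witness each constructed subspace; all the analytic work has already been carried out in the Douglas factorization lemma and in Theorem \ref{thm:boxplusdot}. The only mildly non-trivial moving part is noting that the orthogonal complement of an operator range is still an operator range via the range projection, after which part (ii) is reduced to part (i).
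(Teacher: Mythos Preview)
Your proof is correct and follows essentially the same route as the paper: both parts reduce to Remark \ref{rem:aff_lat}, with (ii) handled by noting $\overline{\sV}=\ran(\raR(A))$ and $\sV^\perp=\ran(I-\raR(A))$ before invoking part (i). The paper's argument is virtually identical, omitting only your remarks on the lattice axioms.
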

\begin{proof}
\begin{itemize}[leftmargin=0.7cm, itemsep=0.2cm]
    \item[(i)] Let $A, B  \in \kM$ be operators such that $\sV = \ran(A), \sW = \ran(B)$. From Remark \ref{rem:aff_lat}, we have 
\begin{align*}
\sV + \sW &= \ran(A) + \ran(B) = \ran(A \boxplus B) \in \affs{\kM},\\
\sV \cap \sW &= \ran(A) \cap \ran(B) =  \ran(A \boxdot B) \in \affs{\kM}.
\end{align*}
    \item[(ii)] Since $\mathbf{R}(A) \in \kM$ and $\overline{\sV} = \ran \big( \mathbf{R}(A) \big)$, we have $\overline{\sV} = \affs{\kM}$. Let $E$ be the projection onto $\overline{\sV}$. Then $\ran(I-E) = \sV^{\perp}$ and $\sV + \sV^{\perp} = \ran(A) + \ran(I-E) = \ran \big( A \boxplus (I-E) \big)$ which lies in $\affs{\kM}$ by part (i).   \qedhere
\end{itemize}
\end{proof}

\begin{lem}
\label{lem:phi_pres_boxes}
\textsl{
Let $(\mathscr{M}; \sH), (\mathscr{N}; \sK)$ be represented von Neumann algebras and $\Phi : \mathscr{M} \to \mathscr{N}$ be a unital normal $*$-homomorphism. For $A, B\in \kM$, we have 
\begin{align*}
    \Phi(A \boxplus B) &= \Phi(A) \boxplus \Phi(B),\\
    \Phi(A \boxdot B) &= \Phi(A) \boxdot \Phi(B),\\
    \Phi(A\invran B) &= \Phi(A)\invran \Phi(B).
\end{align*}
 }
\end{lem}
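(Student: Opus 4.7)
The plan is to exploit the explicit formulas for $\boxplus$, $\boxdot$, and $\invran$ recorded in Theorem \ref{thm:boxplusdot} and Remark \ref{rem:aff_lat}, together with the facts that (a) a unital normal $*$-homomorphism commutes with bounded Borel functional calculus applied to self-adjoint operators (in particular, it preserves square roots of positive operators), and (b) the amplification $\Phi_{(2)} : M_2(\kM) \to M_2(\kN)$ is itself a unital normal $*$-homomorphism to which Lemma \ref{lem:morph_prop} applies.

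First I would handle $\boxplus$. Since $A \boxplus B = \sqrt{AA^* + BB^*}$ by Remark \ref{rem:aff_lat}, and $\Phi$ is a unital normal $*$-homomorphism, we have $\Phi(AA^* + BB^*) = \Phi(A)\Phi(A)^* + \Phi(B)\Phi(B)^*$, and normality allows $\Phi$ to pass through the square root of a positive operator (for instance, via strong-operator continuity of the functional calculus on norm-bounded positive sets, or directly since $\sqrt{\cdot}$ is a norm-limit of polynomials on any bounded positive interval). This immediately gives $\Phi(A \boxplus B) = \Phi(A) \boxplus \Phi(B)$.

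For $\boxdot$ and $\invran$, I would lift the computation to the $2\times 2$ matrix level. The matrix $\fT_{A,B} \in M_2(\kM)$ is sent by $\Phi_{(2)}$ to $\fT_{\Phi(A), \Phi(B)} \in M_2(\kN)$ entrywise. Applying Lemma \ref{lem:morph_prop} to the homomorphism $\Phi_{(2)}$ gives
\[
\Phi_{(2)}\bigl(\nulN(\fT_{A,B})\bigr) = \nulN\bigl(\fT_{\Phi(A), \Phi(B)}\bigr).
\]
Since $\Phi_{(2)}$ acts entrywise, taking the $(1,1)$ entry yields $\Phi\bigl(\nulN(\fT_{A,B})_{11}\bigr) = \nulN\bigl(\fT_{\Phi(A), \Phi(B)}\bigr)_{11}$. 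Applying $\Phi$ to the explicit formulas
\[
A\boxdot B = \sqrt{A\,\nulN(\fT_{A,B})_{11}\,A^*}, \qquad A \invran B = \sqrt{\nulN(\fT_{A,B})_{11}},
\]
and again passing $\Phi$ through the square root of the resulting positive operators, we obtain $\Phi(A \boxdot B) = \Phi(A)\boxdot \Phi(B)$ and $\Phi(A \invran B) = \Phi(A)\invran \Phi(B)$.

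There is no real obstacle here; the only point requiring a moment's care is justifying that $\Phi$ commutes with the square root. This is standard (normal $*$-homomorphisms intertwine continuous functional calculus on self-adjoint operators), and once it is in hand each identity is a one-line consequence of the explicit formulas combined with the amplification trick and Lemma \ref{lem:morph_prop}.
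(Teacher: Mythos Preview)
Your proof is correct and follows essentially the same route as the paper: both reduce to the amplification $\Phi_{(2)}$, invoke Lemma \ref{lem:morph_prop} to obtain $\Phi_{(2)}\bigl(\nulN(\fT_{A,B})\bigr) = \nulN\bigl(\fT_{\Phi(A),\Phi(B)}\bigr)$, and then read off the identities from the explicit formulas in Theorem \ref{thm:boxplusdot}. The only cosmetic difference is that the paper handles $\boxplus$ uniformly via $|\fT_{A,B}^*|_{11}$ at the matrix level, whereas you use the scalar formula $\sqrt{AA^*+BB^*}$ directly; both rely on the same fact that a unital normal $*$-homomorphism commutes with square roots.
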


\begin{proof}
Consider the unital normal $*$-homomorphism $\Phi_{(2)}:M_2(\kM) \to M_2(\kN)$. Recall the notation $\fT_{A', B'}$ from \S \ref{sec:prelim}. It is straightforward to see that $\Phi_{(2)}(\fT_{A, B}) = \fT_{\Phi(A), \Phi(B)}$. Using Lemma \ref{lem:morph_prop}, we get $\Phi_{(2)} \big( \nulN (\fT_{A, B}) \big) = \nulN \big( \fT_{\Phi(A), \Phi(B)} \big)$. The desired result follows from Theorem \ref{thm:boxplusdot} and the three equations below,
\begin{align*}
\Phi_{(2)}(\fT_{A, B}) &= \fT_{\Phi(A), \Phi(B)};\\
\Phi_{(2)} \left( 
\begin{bmatrix}
A & 0\\
0 & 0
\end{bmatrix} \nulN(\fT_{A, B}) \right) &= 
\begin{bmatrix}
\Phi(A) & 0 \\
0 & 0
\end{bmatrix} \nulN \Big( \fT_{\Phi(A), \Phi(B)} \Big);\\
\Phi_{(2)} \left( 
\begin{bmatrix}
I_{\sH} & 0\\
0 & 0
\end{bmatrix} \nulN(\fT_{A, B}) \right) &= 
\begin{bmatrix}
I_{\sK} & 0 \\
0 & 0
\end{bmatrix} \nulN \Big( \fT_{\Phi(A), \Phi(B)} \Big).  \qedhere
\end{align*}
\end{proof}

\begin{thm}
\label{thm:lat_morph}
\textsl{
Let $(\mathscr{M}; \sH), (\mathscr{N}; \sK)$ be represented von Neumann algebras and $\Phi : \mathscr{M} \to \mathscr{N}$ be a unital normal $*$-homomorphism.
\begin{itemize}
    \item[(i)] The mapping $\Phi_{\aff}^s : \affs{\kM} \to \affs{\kN}$ given by $\ran(A) \mapsto \ran \big( \Phi(A) \big)$ for $A \in \mathscr{M}$, is a well-defined lattice homomorphism;
    \item[(ii)] $\Phi_{\aff}^s(\{ 0 \}_{\sH}) = \{ 0 \}_{\sK}$,  and $\Phi_{\aff}^s(\sH) = \sK$;
    \item[(iii)] for all $\sV \in \affs{\kM}$, $\Phi_{\aff}^s\left(\overline{\sV}\right)=\overline{\Phi_{\aff}^s(\sV)}$;
    \item[(iv)] if $\sV$ dense in $\sH$, then $\Phi_{\aff}^s(\sV)$ is dense in $\sK$;
    \item[(v)] if $\sV, \sW \in \affs{\kM}$ are such that $\sV \subseteq \sW$, then $\Phi_{\aff}^s(\sV) \subseteq \Phi_{\aff}^s(\sW)$.
\end{itemize}
}
\end{thm}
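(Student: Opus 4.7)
The plan is to deduce all five assertions from the machinery built earlier in this section: the Douglas factorization lemma (Theorem \ref{thm:douglas_vNa}) and its $\Phi$-equivariance (Lemma \ref{lem:phi_pres_douglas}), the interaction of $\Phi$ with range and null projections (Lemma \ref{lem:morph_prop}), and the algebraic realization of the lattice operations via the operators $A\boxplus B$ and $A\boxdot B$ together with their $\Phi$-equivariance (Remark \ref{rem:aff_lat} and Lemma \ref{lem:phi_pres_boxes}).

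For (i), the first step is to verify well-definedness of $\Phi_{\aff}^s$. If $A, B \in \kM$ satisfy $\ran(A) = \ran(B)$, then Lemma \ref{lem:phi_pres_douglas}(i) applied to each of the two inclusions $\ran(A) \subseteq \ran(B)$ and $\ran(B) \subseteq \ran(A)$ yields $\ran(\Phi(A)) = \ran(\Phi(B))$, so the assignment does not depend on the representing operator. Given well-definedness, the lattice homomorphism property is essentially a translation: writing $\sV = \ran(A)$ and $\sW = \ran(B)$, Remark \ref{rem:aff_lat} identifies $\sV + \sW$ with $\ran(A \boxplus B)$ and $\sV \cap \sW$ with $\ran(A \boxdot B)$, while Lemma \ref{lem:phi_pres_boxes} gives $\Phi(A \boxplus B) = \Phi(A) \boxplus \Phi(B)$ and $\Phi(A \boxdot B) = \Phi(A) \boxdot \Phi(B)$. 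Applying Remark \ref{rem:aff_lat} a second time in the ambient algebra $\kN$ turns these into the desired equalities $\Phi_{\aff}^s(\sV + \sW) = \Phi_{\aff}^s(\sV) + \Phi_{\aff}^s(\sW)$ and $\Phi_{\aff}^s(\sV \cap \sW) = \Phi_{\aff}^s(\sV) \cap \Phi_{\aff}^s(\sW)$.

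For (ii), I would note that $\{0\}_{\sH} = \ran(0_{\sH})$ and $\sH = \ran(I_{\sH})$; since $\Phi$ is a unital homomorphism, $\Phi(0_{\sH}) = 0_{\sK}$ and $\Phi(I_{\sH}) = I_{\sK}$, so both equalities drop out directly. For (iii), choose $A \in \kM$ with $\sV = \ran(A)$, so that $\overline{\sV} = \ran(\raR(A))$. Lemma \ref{lem:morph_prop} then gives $\Phi(\raR(A)) = \raR(\Phi(A))$, whence
\[
    \Phi_{\aff}^s(\overline{\sV}) = \ran \big( \raR(\Phi(A)) \big) = \overline{\ran(\Phi(A))} = \overline{\Phi_{\aff}^s(\sV)}.
\]
Part (iv) is then an immediate corollary of (iii) and (ii): if $\overline{\sV} = \sH$, then $\overline{\Phi_{\aff}^s(\sV)} = \Phi_{\aff}^s(\overline{\sV}) = \Phi_{\aff}^s(\sH) = \sK$. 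Finally, for (v), if $\sV \subseteq \sW$ then $\sV \cap \sW = \sV$, and applying the $\cap$-preserving property from (i) gives $\Phi_{\aff}^s(\sV) \cap \Phi_{\aff}^s(\sW) = \Phi_{\aff}^s(\sV)$, i.e.\ $\Phi_{\aff}^s(\sV) \subseteq \Phi_{\aff}^s(\sW)$; alternatively, this is just one direction of Lemma \ref{lem:phi_pres_douglas}(i) applied to operators $A, B \in \kM$ representing $\sV, \sW$.

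There is no genuine obstacle: once Lemmas \ref{lem:morph_prop}, \ref{lem:phi_pres_douglas}, and \ref{lem:phi_pres_boxes} are in hand, the theorem is essentially a bookkeeping exercise that assembles them. The only subtlety worth being explicit about is keeping track of which ambient von Neumann algebra $\boxplus$ and $\boxdot$ are computed in, so that the two invocations of Remark \ref{rem:aff_lat} (once inside $\kM$ and once inside $\kN$) are correctly chained together.
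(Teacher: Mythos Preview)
Your proposal is correct and essentially identical to the paper's own proof: both establish well-definedness via Lemma~\ref{lem:phi_pres_douglas}(i), derive the lattice-homomorphism property from Remark~\ref{rem:aff_lat} and Lemma~\ref{lem:phi_pres_boxes}, handle (ii) via $\Phi(0)=0$ and $\Phi(I)=I$, prove (iii) using Lemma~\ref{lem:morph_prop}, and obtain (iv) from (ii)$+$(iii). The only cosmetic difference is in (v), where the paper goes straight to Lemma~\ref{lem:phi_pres_douglas}(i) while you first reduce via $\sV\cap\sW=\sV$ and then mention that route as an alternative.
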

\begin{proof}
\begin{itemize}[leftmargin=0.8cm, itemsep=0.2cm]
    \item[(i)] Let $A, B \in \kM$ such that $\ran(A) = \ran(B) \subseteq \sH$. By Theorem \ref{lem:phi_pres_douglas}-(i), we have $\ran(\Phi(A)) = \ran (\Phi(B)) \subseteq \sK$. Thus $\Phi_{\aff}^s$ is well-defined.

Let $\sV, \sW \in \affs{\kM}$ so that there are operators $A, B \in \mathscr{M}$ with $\sV = \ran(A), \sW= \ran(B)$. Using Lemma \ref{lem:lat_of_ran} and Remark \ref{rem:aff_lat}, we note that,\[
\sV+\sW = \ran(A) + \ran(B) = \ran (A \boxplus B).
\] Thus
\begin{align*}
    \Phi_{\aff}^s (\sV+\sW) 
    &= \Phi_{\aff}^s\left( \ran(A \boxplus B) \right) = \ran\left( \Phi(A \boxplus B) \right) \\
    &= \ran\left(\Phi(A) \boxplus \Phi(B)\right) \text{, by Lemma \ref{lem:phi_pres_boxes}}\\
    &= \ran(\Phi(A)) + \ran(\Phi(B)), \text{ by Remark \ref{rem:aff_lat}}\\
    &= \Phi_{\aff}^s(\sV) + \Phi_{\aff}^s(\sW).
\end{align*}
On the other hand using Lemma \ref{lem:lat_of_ran} and Theorem \ref{thm:boxplusdot}-(ii), we get,
\[
    \sV \cap \sW = \ran(A)\cap \ran(B)= \ran(A \boxdot B).
\]
Hence,
\begin{align*}
\Phi_{\aff}^s(\sV \cap \sW)
    &= \Phi_{\aff}^s\left( \ran(A \boxdot B) \right) = \ran\left( \Phi(A \boxdot B) \right) \\
    &= \ran\left(\Phi(A) \boxdot \Phi(B)\right) \text{, by Lemma \ref{lem:phi_pres_boxes}}\\
    &= \ran \left( \Phi(A) \right) \cap \ran \left( \Phi(B) \right), \text{ by Remark \ref{rem:aff_lat}}\\
    &= \Phi_{\aff}^s(\sV) \cap \Phi_{\aff}^s(\sW).
\end{align*}

    \item[(ii)] Follows immediately from $\Phi(0_{\sH}) = 0_{\sK}$ and $\Phi(I_{\sH}) = I_{\sK}$, respectively.

    \item[(iii)] Let $A\in \kM$ be such that $\sV = \ran(A)$. Then we have,
\begin{align*}
\Phi_{\aff}^s(\overline{\sV}) &= \Phi_{\aff}^s\left( \overline{\ran(A)} \right) = \Phi_{\aff}^s\big( \ran ~ \raR(A) \big)\\
&= \ran\big( \Phi\left( \raR(A) \right) \big) = \ran \big( \raR\left(\Phi(A)\right) \big)\\
&= \overline{\ran \Phi(A)} = \overline{\Phi_{\aff}^s(\sV)}.
\end{align*}

    \item[(iv)] If $\sV$ is a dense affiliated subspace of $\sH$, then  $\overline{\Phi_{\aff}^s(\sV)} = \Phi_{\aff}^s(\overline{\sV}) = \Phi_{\aff}^s(\sH) = \sK$.

    \item[(v)] Let $\sV = \ran(A)$ and $\sW = \ran(B)$ for $A, B \in\kM$. Then by Lemma \ref{lem:phi_pres_douglas}-(i) we get, $\Phi_{\aff}^s(\sV) = \ran(\Phi(A)) \subseteq \ran(\Phi(B)) = \Phi_{\aff}^s(\sW) $.  \qedhere
\end{itemize}
\end{proof}

\begin{remark}
	Thus $\mathscr{M} \mapsto \affs{\kM}, \Phi \mapsto \Phi_{\aff}^s$ gives a functor from the category of (represented) von Neumann algebras to the category of lattices. In particular, we observe that the lattice of operator ranges is intrinsically associated with the von Neumann algebra independent of its representation on Hilbert space.
\end{remark}%

\section{Affiliated Operators}
\label{sec:aff_operators}

Let $\sH$ be a Hilbert space and $A, B \in \sB(\sH)$. There is a unique (unbounded) linear operator $T : \ran(B) \to \sH$ such that $TB = A$, if and only if $\nul (B) \subseteq \nul (A)$. Clearly, for every $x \in \sH$, the mapping $T$ must send $Bx$ to $Ax$, and such a mapping makes sense if and only if $\nul (B) \subseteq \nul (A)$. If $\nul (B) \subseteq \nul (A)$, we use the notation $A/B$ to denote the operator $T$ and call it the {\it quotient} of $A$ and $B$, or a {\it quotient representation} of the operator $T$; note that $A/B = A B^{\dagger}$.

We may draw the analogy that for integers $a, b$, there is a unique rational number $a/b$ such that $(a/b)b=a$ if and only if $b \ne 0$. The nullspace condition assists in the `cancellation' of the singular parts of $A$ and $B$, thereby making the expression $A/B$ sensible. If the nullspace condition is not met, the expression $A/B$ should be regarded as invalid syntax, much like how $a/0$ is deemed invalid syntax. For instance, the notation $I/B$ makes sense only if $\nulN(B) = 0$. Throughout this section, $\kM$ denotes a von Neumann algebra acting on the Hilbert space $\sH$.

\begin{definition}
\label{defn:M_aff}
In the above notation, we define the set of \emph{affiliated operators} for the von Neumann algebra $\kM$ to be the set of quotients of operators in $\kM$, that is,
\[
\afm := \{ A/B: A, B \in \kM \textrm{ with } \nul (B) \subseteq \nul (A) \}.
\]
\end{definition}

Our main goal here is to establish the basic algebraic structure of the set of affiliated operators and show that it transforms functorially under unital normal $*$-homomorphisms between represented von Neumann algebras. To this end, it becomes essential to gain a thorough understanding of the quotient representation of affiliated operators. 

\subsection{Equi-range Operators and Uniqueness of Quotient Representation}
Note that a right-invertible operator $C \in \sB(\sH)$ must be surjective. Thus, for $A, B \in \sB(\sH)$ with $\nul (B) \subseteq \nul (A)$, it is straightforward to see that $A/B = AC/BC$. The primary objective of this subsection is to obtain a converse result for quotients of operators in a represented von Neumann algebra, thus showing that the quotient representation of an affiliated operator is essentially unique (see Theorem \ref{thm:uniqueness_of_quotients}).

\begin{prop}
\label{prop:ran_equal}
\textsl{
Let $\mathscr{M}$ be a von Neumann algebra acting on the Hilbert space $\sH$. Let $A, B$ be two operators in $\kM$. Then $\ran(A) = \ran(B)$ if and only if there is a right-invertible element $C \in \kM$ and projections $P, Q$ in the centre of $\kM$ with $P+Q = I$ such that $AP=BCP$ and $BQ = ACQ$.
}
\end{prop}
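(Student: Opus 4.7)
The plan is to prove both directions of the equivalence. The forward direction is a direct computation, while the converse hinges on the Murray--von Neumann comparability theorem applied to the null projections $\nulN(A)$ and $\nulN(B)$.

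For the ``if'' direction, suppose such $C, P, Q$ exist. Since $P$ is central, $CP = PC$, and right-invertibility of $C \in \kM$ is equivalent to surjectivity of $C$ on $\sH$ (a right inverse $D$ gives $\ran(C) = \sH$, and the converse uses that $CC^*$ is then invertible in $\kM$). Hence $\ran(AP) = \ran(BCP) = \ran(BPC) = BP(\ran(C)) = \ran(BP)$, and analogously $\ran(BQ) = \ran(AQ)$. Because $P + Q = I$ with $P, Q$ orthogonal central projections, $\ran(A) = A(P\sH) + A(Q\sH) = \ran(AP) + \ran(AQ)$, and similarly $\ran(B) = \ran(BP) + \ran(BQ)$; combining yields $\ran(A) = \ran(B)$.

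For the converse, assume $\ran(A) = \ran(B)$. Applying the Murray--von Neumann comparability theorem to the projections $\nulN(B)$ and $\nulN(A)$ in $\kM$, I obtain a central projection $P$ (with $Q := I - P$) such that $P\,\nulN(B) \preceq P\,\nulN(A)$ in $P\kM P$ and $Q\,\nulN(A) \preceq Q\,\nulN(B)$ in $Q\kM Q$. The idea is then to construct $C$ corner by corner: on the $P$-corner we want $AP = BCP$ (which matches the first Murray--von Neumann inequality), and on the $Q$-corner we want $BQ = ACQ$ (matching the second).

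The technical heart is the following lemma, a special case of the theorem with trivial central decomposition: \emph{if $\kM_0$ is a von Neumann algebra acting on $\sH_0$ and $A', B' \in \kM_0$ satisfy $\ran(A') = \ran(B')$ together with $\nulN(B') \preceq \nulN(A')$ in $\kM_0$, then there exists a right-invertible $C' \in \kM_0$ with $A' = B'C'$}. For the proof, fix a partial isometry $D \in \kM_0$ with $D^*D \le \nulN(A')$ and $DD^* = \nulN(B')$ (guaranteed by the Murray--von Neumann hypothesis), and set $C' := (B')^{\dagger}A' + D$. Douglas's theorem (Theorem \ref{thm:douglas_vNa}) places $(B')^{\dagger}A'$ in $\kM_0$ with $B'(B')^{\dagger}A' = A'$, while $B'D = 0$ since $\ran(D) = \nulN(B')\sH_0 = \nul(B')$; hence $B'C' = A'$. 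For right-invertibility I would check surjectivity of $C'$ on $\sH_0$ using the orthogonal decomposition $\sH_0 = (I - \nulN(A'))\sH_0 \oplus \nulN(A')\sH_0$: on $(I - \nulN(A'))\sH_0$ the operator $(B')^{\dagger}A'$ maps onto $(I - \nulN(B'))\sH_0$ (since $\ran((B')^{\dagger}A') = (I - \nulN(B'))\sH_0$, which follows from the Kaufman-inverse bijection property together with $\ran(A') = \ran(B')$) while $D$ vanishes there; on $\nulN(A')\sH_0$ the operator $(B')^{\dagger}A'$ vanishes while the partial isometry identities force $D$ to map onto $\nulN(B')\sH_0$. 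As the two images lie in orthogonal subspaces of $\sH_0$ and together span $\sH_0$, $C'$ is surjective, hence right-invertible in $\kM_0$.

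Applying this lemma to $(P\kM P, AP, BP)$ yields a right-invertible $C_P \in P\kM P$ with $BP \cdot C_P = AP$; applying it with the roles of $A, B$ interchanged to $(Q\kM Q, BQ, AQ)$ yields a right-invertible $C_Q \in Q\kM Q$ with $AQ \cdot C_Q = BQ$. Setting $C := C_P + C_Q \in \kM$, the centrality of $P, Q$ together with $PQ = 0$ ensures that the local right inverses $D_P \in P\kM P$, $D_Q \in Q\kM Q$ combine into a right inverse $D := D_P + D_Q$ of $C$ in $\kM$, and the identities $AP = BCP$, $BQ = ACQ$ follow immediately from $C_P = PC_P P$, $C_Q = QC_Q Q$, and $C_P Q = C_Q P = 0$. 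The main obstacle is the surjectivity claim in the lemma: a naive candidate such as $C' = (B')^{\dagger}A' + \nulN(B')$ fails because the two summands act on the same vector and their ranges cannot be hit independently, so one must exploit the Murray--von Neumann hypothesis to choose a partial isometry $D$ supported on a subprojection of $\nulN(A')$, which precisely decouples the two ranges.
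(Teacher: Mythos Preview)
Your proof is correct and follows essentially the same route as the paper: Douglas factorization gives $B^{\dagger}A \in \kM$, the comparison theorem supplies the central decomposition $P+Q=I$, and a partial isometry between (sub)projections of $\nulN(A)$ and $\nulN(B)$ is added to $B^{\dagger}A$ to produce the right-invertible $C$. The only cosmetic difference is that you verify right-invertibility of $C'$ by checking surjectivity on the decomposition $\sH_0 = (I-\nulN(A'))\sH_0 \oplus \nulN(A')\sH_0$, whereas the paper writes down an explicit right inverse $A^{\dagger}B + V^*$ and computes the product; both arguments are equally short.
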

\begin{proof}
\noindent {\large ($\Longrightarrow$)}. Since $\ran(A) = \ran(B)$, the operators $B^{\dagger}A, A^{\dagger} B$ are bounded operators in $\mathscr{M}$. Note that 
\begin{align*}
(B^{\dagger}A)(A^{\dagger}B) &= B^{\dagger}B =  I- \nulN(B),\\
(A^{\dagger}B)(B^{\dagger}A) &= A^{\dagger}A = I - \nulN(A).
\end{align*}
To begin with, let us assume that the projections $\nulN(A)$ and $\nulN(B)$ are comparable (say, $\nulN(B) \precsim \nulN(A)$); for instance, this will be the case when $\kM$ is a factor.

Let $V$ be a partial isometry in $\kM$ such that $VV^* = \nulN(B)$ and $V^*V \le \nulN(A)$, so that $A(V^*V) = 0$ and $B(VV^*) = 0$. Keeping in mind the basic identities for partial isometries, $VV^*V = V, V^*VV^* = V^*$, we note that
\begin{align*}
AV^* &= A(V^*V)V^* = 0,\\  
VA^{\dagger} &= V(V^*V) A^{\dagger} = 0,\\
BV &= B(VV^*)V = 0.
\end{align*}
Consider the operators $C := B^{\dagger}A + V, D := A^{\dagger}B + V^*$ in $\mathscr{M}$.  Using the above equalities, we have
\begin{align*}
    CD &= B^{\dagger}AA^{\dagger}B + B^{\dagger}AV^* + V A^{\dagger}B + VV^*\\
    &= \left( I - \nulN(B) \right) + 0 + 0 + \nulN(B)\\
    &= I, \text{ and }\\
BC &= BB^{\dagger}A + BV = A.
\end{align*}
Thus $C$ is a right-invertible element in $\kM$ with $A = BC$. In the case where $\nulN(B) \precsim \nulN(A)$, by a symmetric argument, we have a right-invertible element $C \in \kM$ such that $B = AC$. 

Next we proceed to the general case where $\nulN(A)$ and $\nulN(B)$ may not be comparable. By the comparison theorem, \cite[Theorem 6.2.7]{kr-II}, there are central projections $P, Q$ with $P+Q=I$ such that $\nulN(A)P \precsim \nulN(B)P$ and $\nulN(B)Q \prec \nulN(A)Q$. Consider the von Neumann algebra $\kM P$ acting on the Hilbert space $P(\sH)$, and view the operators $AP, BP$ as acting on $P(\sH)$. In the context of $\kM P, AP, BP$, the preceding discussion tells us that there is an operator $C_P \in \kM P$ which is right-invertible relative to $\kM P$ such that $AP = (BP)C_P$. Similarly there is a right-invertible operator $C_Q \in \kM Q$ such that $BQ = (AQ) C_Q$. The operator $C := C_P \oplus C_Q$ is a right-invertible operator in $\kM$ satisfying the desired properties.
\vskip 0.08in

\noindent {\large  ($\Longleftarrow$).} Since $AP = BCP = BPC$, we have $\ran(AP) \subseteq \ran(BP)$. Let $D$ be the right-inverse of $C$. Note that $(AP)D = ADP = BCDP = BP$ whence $\ran(BP) \subseteq \ran(AP)$. Thus $\ran(AP) = \ran(BP)$ and similarly, $\ran(AQ) = \ran(BQ)$. By Remark \ref{rem:aff_lat}, $$\ran (AP) + \ran(AQ) = \ran (\sqrt{AA^*P+ AA^*Q}) = \ran \left( \sqrt{AA^*} \right) = \ran(A),$$ and similarly, $\ran(BP) + \ran(BQ) = \ran(B)$. We conclude that $\ran(A) = \ran(B)$.
\end{proof}

\begin{cor}
\textsl{
Let $\mathscr{M}$ be a von Neumann algebra acting on the Hilbert space $\sH$, and $A, B$ be two operators in $\kM$. 
\begin{itemize}
    \item[(i)] If $\kM$ is factor, then $\ran(A) = \ran(B)$ if and only if there is a right-invertible element $C \in \mathscr{M}$ such that either $A = BC$ or $B=AC$. 
    \item[(ii)] If $\kM$ is a finite von Neumann algebra, $\ran(A) = \ran(B)$ if and only if there is an invertible element $C \in \mathscr{M}$ such that $A = BC$.
\end{itemize}
}
\end{cor}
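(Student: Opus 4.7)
The plan is to extract both parts from Proposition \ref{prop:ran_equal} by specializing its central decomposition $P + Q = I$ under the factor/finite hypotheses. The converse implications are routine and I would dispose of them first: if $A = BC$ with $C$ right-invertible having right-inverse $D$, then $A = BC$ gives $\ran(A) \subseteq \ran(B)$ and $B = BCD = AD$ gives the reverse inclusion, so $\ran(A) = \ran(B)$; the symmetric case $B = AC$, as well as the case of invertible $C$, are handled the same way.

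For the forward direction of (i), I would apply Proposition \ref{prop:ran_equal} to produce a right-invertible $C \in \kM$ and central projections $P, Q$ with $P + Q = I$ satisfying $AP = BCP$ and $BQ = ACQ$. Since $\kM$ is a factor, its center is $\{0, I\}$, which forces $(P, Q)$ to be either $(I, 0)$ or $(0, I)$. Reading off the two equations in each case yields $A = BC$ or $B = AC$, respectively.

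For (ii), I would again start from Proposition \ref{prop:ran_equal} and invoke the classical fact that in a finite von Neumann algebra every right-invertible element is two-sided invertible, so $C$ admits an inverse $D := C^{-1}$. The one step requiring care is to amalgamate $C, D$ and the central projections $P, Q$ into a single invertible witness. My plan is to set $E := CP + DQ$; centrality and orthogonality of $P, Q$ together with $CD = DC = I$ show that $E$ is invertible with inverse $DP + CQ$. A short computation using $AP = BCP$ and the consequence $AQ = BDQ$ of $BQ = ACQ$ (obtained by multiplying on the right by $D$ and using centrality of $Q$) then yields $BE = BEP + BEQ = AP + AQ = A$.

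The only ingredient beyond what the excerpt supplies is the standard fact that right-invertible elements in a finite von Neumann algebra are two-sided invertible; this is a consequence of finiteness (e.g., via the center-valued trace), so I do not anticipate any genuine obstacle. The assembly of $E = CP + DQ$ is the step that benefits from being written out carefully, but poses no conceptual difficulty.
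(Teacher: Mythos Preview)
Your proof is correct and follows essentially the same approach as the paper: part (i) reduces to the fact that a factor has only trivial central projections, and part (ii) uses that right-invertible elements in finite von Neumann algebras are invertible, then combines the two central pieces via $E = CP + C^{-1}Q$ exactly as you do. Your treatment of part (ii) is in fact more carefully written than the paper's, which is quite terse at this point.
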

\begin{proof}
Since the only central projections in a factor are $0$ and $I$, part (i) follows from Proposition \ref{prop:ran_equal}. Part (ii) follows from the observation that every right-invertible element in a finite von Neumann algebra is invertible, and the fact that $A=BC$ if and only if $A=BC^{-1}$.
\end{proof}

In the theorem below, we establish that the quotient representation for affiliated operators is essentially unique.

\begin{thm}[Uniqueness of quotient representation]
\label{thm:uniqueness_of_quotients}
\textsl{
Let $\mathscr{M}$ be a von Neumann algebra acting on the Hilbert space $\sH$. For $i = 1, 2$, let $A_i, B_i$ be bounded operators in $\kM$ with $\nul (B_i) \subseteq \nul (A_i)$. Then $A_1/B_1 = A_2/B_2$ if and only if there is a right-invertible element $C \in \kM$ and projections $P, Q$ in the centre of $\kM$ with $P+Q=I$ such that 
\begin{align*}
    A_1 P &= A_2 C P, ~ B_1 P = B_2 C P, \text{ and } \\
    A_2 Q &= A_1 C Q, ~ B_2 Q = B_1 C Q.
\end{align*}
}
\end{thm}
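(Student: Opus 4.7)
The plan is to bootstrap off of Proposition \ref{prop:ran_equal}, which already handles the ``denominator'' side of the assertion. Recall that for $T = A_i/B_i = A_i B_i^\dagger$ the domain is $\ran(B_i)$ and, for any $v \in \sH$, $T(B_i v) = A_i v$ (this is well-defined because $\nul(B_i) \subseteq \nul(A_i)$). The equality $A_1/B_1 = A_2/B_2$ therefore has two pieces of content: the two domains $\ran(B_1), \ran(B_2)$ coincide, and for every $v, w$ with $B_1 v = B_2 w$ we have $A_1 v = A_2 w$.

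For the forward direction, assume $A_1/B_1 = A_2/B_2$. From the domain condition, $\ran(B_1) = \ran(B_2)$, so Proposition \ref{prop:ran_equal} produces a right-invertible $C \in \kM$ and central projections $P, Q \in \kM$ with $P+Q = I$ such that $B_1 P = B_2 C P$ and $B_2 Q = B_1 C Q$. The key observation is that the same $C, P, Q$ automatically work for $A_1, A_2$. Indeed, for any $x \in \sH$, the identity $B_1(Px) = B_2(CPx)$ together with the quotient equality forces $A_1(Px) = A_2(CPx)$, i.e.\ $A_1 P = A_2 CP$; a symmetric argument with $Q$ yields $A_2 Q = A_1 CQ$.

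For the reverse direction, assume the stated relations. The ``$(\Leftarrow)$'' half of Proposition \ref{prop:ran_equal} applied to the two $B$-relations gives $\ran(B_1) = \ran(B_2)$, so the domains of the two quotient operators agree. Let $D \in \kM$ be a right-inverse of $C$ (so $CD = I$), fix $v \in \sH$, set $z := B_1 v$, and consider the candidate preimage $w := CPv + QDv$ of $z$ under $B_2$. Using the centrality of $P, Q$ and $CD = I$, a short calculation with the $B$-relations gives
\[
B_2 w \;=\; B_2 CP v + B_2 Q Dv \;=\; B_1 Pv + B_1 C Q D v \;=\; B_1 Pv + B_1 Qv \;=\; B_1 v \;=\; z.
\]
Running the same calculation with the $A$-relations in place of the $B$-relations yields $A_2 w = A_1 Pv + A_1 Qv = A_1 v$. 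Since $\nul(B_2) \subseteq \nul(A_2)$ guarantees that $(A_2/B_2)(z)$ is independent of the preimage chosen, we conclude $(A_1/B_1)(z) = A_1 v = A_2 w = (A_2/B_2)(z)$ on the common domain $\ran(B_1)$.

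The only real step that requires thought is recognizing, in the forward direction, that the $C, P, Q$ produced by Proposition \ref{prop:ran_equal} for the denominators automatically intertwine the numerators; this is exactly the ``quotient'' intuition (once you fix how the denominators match, the numerators have no freedom). Everything else is bookkeeping with $CD = I$ and the centrality of $P, Q$; no domain subtleties arise because the operators involved are bounded and the preimage question is settled by the nullspace containment.
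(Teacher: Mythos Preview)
Your proof is correct and follows essentially the same route as the paper: the forward direction is identical (apply Proposition~\ref{prop:ran_equal} to the denominators and observe the numerators are forced), and the reverse direction differs only in bookkeeping---the paper checks agreement separately on $\ran(B_1P)$ and $\ran(B_2Q)=\ran(B_1Q)$ and then argues these sum to $\ran(B_1)$, whereas you directly construct a $B_2$-preimage $w=CPv+QDv$ for an arbitrary $z=B_1v$, which is slightly slicker but amounts to the same computation.
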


\begin{proof}
($\Longrightarrow$) For $S = A_1/B_1 = A_2/B_2$, we have, 
\begin{equation}
    \label{eqn:t_b_i_a_i}
    SB_i x = A_i x. \; (\forall x \in \sH \text{ and } i = 1, 2).
\end{equation}

Since $\ran(B_1) = \ran(B_2) (=\dom(S))$, using Proposition \ref{prop:ran_equal}, we observe that there exists a right invertible element $C\in \kN$ and two projections $P, Q$ in the centre of $\kN$ with $P+Q=I$ such that 
\[
B_1 P = B_2 CP \text{ and } B_2 Q = B_1 C Q.
\]

Using equation (\ref{eqn:t_b_i_a_i}), for every $x \in \sH$, we have $SB_1(Px) = A_1(Px)$ and $SB_2(CPx)=A_2CPx$. Since $B_1 P = B_2 CP$, we note that $A_1Px = SB_1 Px = SB_2CPx = A_2 CPx$ for every $x \in \sH$, whence $A_1 P = A_2 CP$. In a similar way from $B_2 Q = B_1 C Q$ we get $A_2 Q = A_1 C Q$. This completes the forward implication of the theorem.
\vskip 0.05in

\noindent ($\Longleftarrow$) Assume that there are operators $C, P, Q$ in $\kM$ as described in the assertion. First we observe that the existence of such $C, P, Q$ necessarily implies (by Proposition \ref{prop:ran_equal}) that, $$\dom(A_1/B_1)= \ran(B_1) = \ran(B_2) = \dom(A_2/B_2).$$
Now note that for any $x\in \sH$,
\[
(A_1/B_1)(B_1Px) = A_1 Px=A_2 CPx = (A_2/B_2) (B_2 CPx) = (A_2/B_2)(B_1 P x).
\]
Hence $A_1/B_1= A_2/B_2$ on $\ran(B_1 P)$. A similar calculation shows that $(A_2/B_2) (B_2Qx)= (A_1/B_1) (B_2 Q x)$ for all $x\in \sH$, that is, $A_1/B_1 = A_2/B_2$ on $\ran(B_2 Q)$. As $C$ is surjective (being right-invertible), we have $\ran (B_2 Q) = \ran(B_1 CQ) = \ran(B_1QC) = \ran(B_1Q)$. Note that, $$\ran(B_1P) + \ran(B_1Q) = \ran\left( \sqrt{B_1P B_1^* + B_1 Q B_1^*} \right)=\ran \left( \sqrt{B_1B_1^*} \right) = \ran(B_1).$$ Therefore we get,
\[
A_1/B_1= A_2/B_2 \text{ on }\ran(B_1).
\]
Hence, $A_1/B_1 = A_2/B_2$.
\end{proof}

\begin{cor}
\textsl{
Let $\kM$ be a factor acting on $\sH$ and $A_1, B_1, A_2, B_2 \in \kM$. If $A_1/B_1 = A_2/B_2$, then there is a right-invertible element $C \in \kM$ such that, 
\begin{align*}
    \text{either, }A_1 = A_2 C, ~ B_1 = B_2 C,\\
    \text{or, } A_2 = A_1 C,~ B_2 = B_1 C.
\end{align*}
}
\end{cor}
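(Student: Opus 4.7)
The plan is to apply Theorem \ref{thm:uniqueness_of_quotients} directly and exploit the fact that a factor has trivial centre. Since $\kM$ is a factor, the only projections in the centre of $\kM$ are $0$ and $I$. Therefore, any pair of central projections $P, Q$ with $P+Q=I$ must be either $(P,Q)=(I,0)$ or $(P,Q)=(0,I)$.

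Given $A_1/B_1 = A_2/B_2$, Theorem \ref{thm:uniqueness_of_quotients} furnishes a right-invertible $C \in \kM$ together with central projections $P, Q$ satisfying $P+Q=I$ and the four identities
\[
A_1 P = A_2 C P, \; B_1 P = B_2 C P, \; A_2 Q = A_1 C Q, \; B_2 Q = B_1 C Q.
\]
In the first case $(P,Q) = (I,0)$, these identities collapse to $A_1 = A_2 C$ and $B_1 = B_2 C$; in the second case $(P,Q) = (0,I)$, they collapse to $A_2 = A_1 C$ and $B_2 = B_1 C$. Either way, the conclusion of the corollary holds with the same right-invertible element $C$ supplied by Theorem \ref{thm:uniqueness_of_quotients}.

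There is essentially no obstacle here; the corollary is a direct specialization of the uniqueness theorem to the factor case. The only minor point worth noting is that the dichotomy in the conclusion ``either $A_1=A_2C, B_1=B_2C$ or $A_2=A_1C, B_2=B_1C$'' arises naturally from the inability to split $I$ into two nontrivial central projections in a factor, which forces one of the two halves of the general decomposition to be vacuous.
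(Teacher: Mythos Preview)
Your proof is correct and follows exactly the same approach as the paper, which simply notes that a factor has only trivial central projections and invokes Theorem \ref{thm:uniqueness_of_quotients}. Your write-up is just a more explicit unpacking of that one-line observation.
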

\begin{proof}
Since a factor only has trivial central projections, the result immediately follows from Theorem \ref{thm:uniqueness_of_quotients}.
\end{proof}

Below we generalize \cite[Lemma 2.2]{izumino_quotient_bdd} to the setting of von Neumann algebras from which a weaker version of our Theorem \ref{thm:uniqueness_of_quotients} on uniqueness of quotient representation may be derived. This result plays a crucial role in establishing (see Definition \ref{defn:Phi_aff}) that the set of affiliated operators is a functorial construction.

\begin{lem}[{cf. \cite[Lemma 2.2]{izumino_quotient_bdd}}]
\label{lem:ext_maff}
\textsl{
Let $\kM$ be a von Neumann algebra acting on the Hilbert space $\sH$. For $i = 1, 2$, let $A_i, B_i$ be bounded operators in $\kM$ with $\nul (B_i) \subseteq \nul (A_i)$. Then $A_2/B_2$ is an extension of $A_1/B_1$ if and only if there is an operator $C \in \kM$ such that, $A_1 = A_2 C, B_1 = B_2 C$.
}
\end{lem}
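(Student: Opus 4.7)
The plan is to prove both directions directly from the definition of the quotient operation and the Douglas factorization lemma (Theorem \ref{thm:douglas_vNa}).

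For the backward implication ($\Leftarrow$), I would start by assuming the existence of $C \in \kM$ with $A_1 = A_2 C$ and $B_1 = B_2 C$. Then $\dom(A_1/B_1) = \ran(B_1) = \ran(B_2 C) \subseteq \ran(B_2) = \dom(A_2/B_2)$, giving the required domain inclusion. For any $x \in \sH$, observe
\begin{align*}
    (A_2/B_2)(B_1 x) &= (A_2/B_2)(B_2 C x) = A_2 C x = A_1 x = (A_1/B_1)(B_1 x),
\end{align*}
so the operators agree on $\ran(B_1)$, proving that $A_2/B_2$ extends $A_1/B_1$.

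For the forward implication ($\Rightarrow$), suppose $A_2/B_2$ extends $A_1/B_1$. In particular, this gives $\ran(B_1) = \dom(A_1/B_1) \subseteq \dom(A_2/B_2) = \ran(B_2)$. By the Douglas factorization lemma (Theorem \ref{thm:douglas_vNa}), the operator $C := B_2^{\dagger} B_1$ is a bounded element of $\kM$, and using the identity $B_2 B_2^{\dagger}|_{\ran(B_2)} = I|_{\ran(B_2)}$ from Lemma \ref{lem:kauf_inv_closed} together with $\ran(B_1) \subseteq \ran(B_2)$, we get $B_2 C = B_2 B_2^{\dagger} B_1 = B_1$.

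The main remaining step is to show that the \emph{same} $C$ also satisfies $A_1 = A_2 C$. Applying the extension hypothesis at vectors in $\ran(B_1)$: for every $x \in \sH$, since $B_1 x \in \ran(B_1) \subseteq \ran(B_2)$,
\begin{align*}
    A_2 C x = A_2 B_2^{\dagger}(B_1 x) = (A_2/B_2)(B_1 x) = (A_1/B_1)(B_1 x) = A_1 x.
\end{align*}
Hence $A_1 = A_2 C$, completing the proof. There is no substantive obstacle here; the argument is essentially a bookkeeping exercise once one chooses the correct candidate $C = B_2^{\dagger} B_1$ supplied by Douglas's lemma and then exploits the defining identities of the Kaufman inverse.
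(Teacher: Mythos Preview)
Your proof is correct, but it takes a different route from the paper's. The paper encodes the graph of $A'/B'$ as the range of the $2\times 2$ matrix $\fS_{A',B'} = \begin{bmatrix} B' & 0 \\ A' & 0 \end{bmatrix} \in M_2(\kM)$, so that the extension relation $A_1/B_1 \subseteq A_2/B_2$ becomes the range inclusion $\ran(\fS_{A_1,B_1}) \subseteq \ran(\fS_{A_2,B_2})$; it then applies the Douglas factorization lemma in $M_2(\kM)$ to produce a matrix $\mathfrak{X}$ whose $(1,1)$ entry serves as $C$. You instead apply Douglas directly in $\kM$ to the inclusion $\ran(B_1) \subseteq \ran(B_2)$, which yields the explicit candidate $C = B_2^{\dagger} B_1$, and then verify $A_1 = A_2 C$ by evaluating the extension hypothesis pointwise on $\ran(B_1)$. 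Your argument is more elementary and has the small bonus of producing $C$ explicitly; the paper's graph-based formulation is more conceptual and dovetails with its later use of graphs (e.g., Theorem~\ref{thm:graph_phi}).
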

\begin{proof}
For bounded (everywhere-defined) operators $A', B'$ acting on a Hilbert space $\sH'$, recall the notation $\fS_{A', B'}$ from \S \ref{sec:prelim}. If $\nul (B') \subseteq \nul (A')$, it is straightforward to see that,
\begin{equation}
\label{eqn:graph_formula}
\graph{A'/B'} 
= \big\{ (B'x, A'x): x \in \sH' \big\}
= \ran(\fS_{A',B'}).
\end{equation}
Thus we have the following equivalences, 
\begin{equation}
    \begin{aligned}
    A_1/B_1 \subseteq A_2/B_2 &\iff \graph{A_1/B_1} \subseteq \graph{A_2/B_2}\\
    &\iff \ran(\fS_{A_1, B_1}) \subseteq \ran(\fS_{A_2, B_2} ). 
    \end{aligned}
\label{eqn:grph_ran}
\end{equation}
\vskip 0.03in

\noindent ($\Longrightarrow$) Assume that $A_1/B_1 \subseteq A_2/B_2$. From Theorem \ref{thm:douglas_vNa} in the context of the von Neumann algebra $M_2(\kM)$ and equation (\ref{eqn:grph_ran}), there is a matrix $\mathfrak{X} \in M_2(\kM)$ such that $$\fS_{A_1, B_1} = \fS_{A_2, B_2} \mathfrak{X}.$$ 
Choosing $C = \mathfrak{X}_{11}$ which lies in $\kM$, the forward implication in the assertion follows.
\vskip 0.05in

\noindent ($\Longleftarrow$) Conversely, if $A_1 = A_2 C, B_1 = B_2 C$, for some $C \in \kM$, then $$\fS_{A_1, B_1} = \fS_{A_2, B_2} (C \oplus 0_{\sH}).$$
Again by using Theorem \ref{thm:douglas_vNa} in the context of $M_2(\kM)$ and equation (\ref{eqn:grph_ran}), we conclude that $A_1/B_1 \subseteq A_2/B_2$.
\end{proof}

\subsection{Algebraic Structure of $\afm$}
\label{subsec:alg_str_m_aff}
In this subsection, we equip the set of affiliated operators with the algebraic structure given by two binary operations, the {\it sum} and the {\it product}, and two unary operations, the {\it Kaufman-inverse} and the {\it adjoint}. Thus from Remark \ref{rem:distributivity_operators}, we note that $(\afm; + , \cdot)$ is a near-semiring (see Definition \ref{def:near-semiring}).

\begin{lem}
\label{lem:aff_times_bdd_op}
\textsl{
Let $\kM$ be a von Neumann algebra.
\begin{itemize}
    \item[(i)] For $A, B \in \kM$,  $A B^{\dagger}$ is in $\afm$. 
    \item[(ii)] Let $A, B, C \in \kM$ such that $\ran(C) \subseteq \ran(B)$ and $\nul (B) \subseteq \nul (A)$. Then $$(A/B) C \in \kM.$$
\end{itemize}
}
\end{lem}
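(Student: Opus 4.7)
The plan is to exhibit explicit quotient representations in both parts, drawing on the characterization of the Kaufman inverse from Lemma \ref{lem:kauf_inv_closed} and Douglas factorization (Theorem \ref{thm:douglas_vNa}).

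For part (i), I would observe that $\dom(AB^{\dagger}) = \ran(B)$ and that for every $x \in \sH$ we have $AB^{\dagger}(Bx) = A(I - \nulN(B))x$ by the defining property $B^{\dagger}B = I - \nulN(B)$. This suggests taking the quotient representation $AB^{\dagger} = A'/B$ with $A' := A(I - \nulN(B))$. Both $A'$ and $B$ lie in $\kM$, since $\nulN(B) \in \kM$. Moreover, if $x \in \nul(B)$ then $(I - \nulN(B))x = 0$, so $A'x = 0$; hence $\nul(B) \subseteq \nul(A')$, confirming that $A'/B$ is a well-defined element of $\afm$. A direct check that $(A'/B)(Bx) = A'x = AB^{\dagger}(Bx)$ for all $x \in \sH$, combined with $\dom(A'/B) = \ran(B) = \dom(AB^{\dagger})$, shows $AB^{\dagger} = A'/B \in \afm$.

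For part (ii), I would invoke Douglas factorization: since $\ran(C) \subseteq \ran(B)$ with $B, C \in \kM$, Theorem \ref{thm:douglas_vNa} gives $X \in \kM$ with $C = BX$. Since $\ran(C) \subseteq \ran(B) = \dom(A/B)$, the product $(A/B)C$ is everywhere defined on $\sH$. For every $y \in \sH$, using the defining property of $A/B$ (namely that $(A/B)(Bw) = Aw$ for all $w \in \sH$, valid because $\nul(B) \subseteq \nul(A)$), I compute
\[
    (A/B)(Cy) = (A/B)(B(Xy)) = A(Xy) = (AX)y.
\]
Thus $(A/B)C = AX$ as an equality of operators on $\sH$, and $AX \in \kM$.

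There is no real obstacle here; the content lies in packaging the Kaufman-inverse identity and Douglas factorization cleanly. The only subtle point is the verification in part (i) that the nullspace condition $\nul(B) \subseteq \nul(A')$ holds for the choice $A' = A(I - \nulN(B))$, which is what legitimizes the notation $A'/B$ in the sense of Definition \ref{defn:M_aff}.
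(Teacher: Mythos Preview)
Your proof is correct and follows essentially the same approach as the paper. In part (i) both you and the paper take $A' = A(I - \nulN(B))$ and verify $AB^{\dagger} = A'/B$; in part (ii) the paper uses the specific Douglas factor $B^{\dagger}C \in \kM$ to write $(A/B)C = A(B^{\dagger}C)$, whereas you use a generic Douglas factor $X$ with $C = BX$ to get $(A/B)C = AX$, but this is the same argument.
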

\begin{proof}
\begin{itemize}[leftmargin=0.8cm, itemsep=0.2cm]
    \item[(i)] Note that $\nul (B) = \nul \big( I - \nulN(B) \big) \subseteq \nul \left( A \big( I - \nulN(B) \big) \right)$ and $\ran(B^{\dagger}) \subseteq \nul (B)^{\perp}$. Thus, $$AB^\dagger = A \big( I-\nulN(B) \big) B^\dagger = \big( A (I-\nulN(B) \big)/ B \in \afm.$$
    \item[(ii)] Let $T = A/B$ and $C\in \kM$ as given. Since $\ran(C) \subseteq \dom(T)= \ran(B)$ we can see that $B^\dagger C$ lies in $\kM$ (see Proposition \ref{prop:douglas_soln}). Hence $TC = (A/B) C = A(B^\dagger C)\in \kM$. \qedhere
\end{itemize}
\end{proof}

The first step is to note that $\afm$ is closed under sums and products of operators as defined in Definition \ref{def:str_sum_pdt}. This generalizes \cite[Theorem 3.1, 3.2]{izumino_quotient_bdd} to the setting of von Neumann algebras.

\begin{thm}
\label{thm:m_aff_monoid}
\textsl{
Let $\kM$ be a von Neumann algebra and $T_1, T_2 \in \afm$. Let $A_1, B_1, A_2, B_2 \in \kM$ be such that $T_1 = A_1/B_1, T_2 = A_2/B_2$. Then
\begin{itemize}
    \item[(i)] $T_1 + T_2$ is in $\afm$ with quotient representation $A/B$, where 
\begin{align*}
  A &= (A_1/B_1) (B_1 \boxdot B_2) + (A_2/B_2)(B_1 \boxdot B_2),\\
  B &= B_1 \boxdot B_2.
\end{align*}
    \item[(ii)] $T_1 T_2 $ is in $\afm $ with quotient representation $A/B$, where 
\begin{align*}
  A &= (A_1/B_1) (A_2 (A_2\invran B_1)),\\
  B &= B_2 (A_2\invran B_1).
\end{align*}
\end{itemize}
}
\end{thm}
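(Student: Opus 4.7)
The plan is to prove both parts by tracking domains carefully using Theorem \ref{thm:boxplusdot} (in its notation from Remark \ref{rem:aff_lat}) and then verifying that the proposed quotient representations agree pointwise with the defined sum/product, with the nullspace condition following painlessly from $\nul(B_i) \subseteq \nul(A_i)$.

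For part (i), I would first identify the domain: by Definition \ref{def:str_sum_pdt}, $\dom(T_1 + T_2) = \ran(B_1) \cap \ran(B_2)$, which by Remark \ref{rem:aff_lat}-(ii) equals $\ran(B_1 \boxdot B_2) = \ran(B)$. Since $\ran(B) \subseteq \ran(B_i)$ for $i=1,2$, Lemma \ref{lem:aff_times_bdd_op}-(ii) gives $(A_i/B_i)B \in \kM$, so $A = (A_1/B_1)B + (A_2/B_2)B \in \kM$. Next I would verify $\nul(B) \subseteq \nul(A)$: for $x \in \nul(B)$, $Ax = T_1(Bx) + T_2(Bx) = T_1(0) + T_2(0) = 0$, since $0$ lies in every operator's domain and is sent to $0$. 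Finally, for any $y = Bx \in \ran(B)$,
\[
(A/B)(y) = Ax = (T_1 B)x + (T_2 B)x = T_1(y) + T_2(y) = (T_1+T_2)(y),
\]
so $A/B = T_1 + T_2$ on the common domain $\ran(B)$.

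For part (ii), I would again begin by unwinding the domain of $T_1 T_2$. An element $x \in \dom(T_2) = \ran(B_2)$ lies in $\dom(T_1 T_2)$ iff $T_2 x \in \dom(T_1) = \ran(B_1)$. Writing $x = B_2 y$ so that $T_2 x = A_2 y$, the condition becomes $y \in A_2^{-1}(\ran(B_1))$, which by Theorem \ref{thm:boxplusdot}-(iii) equals $\ran(A_2 \invran B_1)$. Consequently $\dom(T_1 T_2) = \ran(B_2(A_2 \invran B_1)) = \ran(B)$, matching the proposed $B$. Moreover, $A_2(A_2 \invran B_1)$ has range contained in $\ran(B_1) = \dom(T_1)$, so another application of Lemma \ref{lem:aff_times_bdd_op}-(ii) gives $A = (A_1/B_1) \cdot A_2(A_2 \invran B_1) \in \kM$.

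To finish part (ii), I would check the nullspace containment and the pointwise identity together. Setting $C := A_2 \invran B_1$, suppose $By = B_2 Cy = 0$, i.e., $Cy \in \nul(B_2) \subseteq \nul(A_2)$; then $A_2 Cy = 0$ and $Ay = (A_1/B_1)(0) = 0$, giving $\nul(B) \subseteq \nul(A)$. For any $z = By \in \ran(B)$, the same calculation yields
\[
(T_1 T_2)(z) = T_1(T_2 B_2 C y) = T_1(A_2 C y) = (A_1/B_1)(A_2 C y) = Ay = (A/B)(z),
\]
so $A/B = T_1 T_2$. There is no real obstacle here beyond bookkeeping: the heavy lifting — that $\ran(B_1) \cap \ran(B_2)$ and $A_2^{-1}(\ran(B_1))$ are themselves ranges of explicit operators in $\kM$ — has been done in Theorem \ref{thm:boxplusdot}, and Lemma \ref{lem:aff_times_bdd_op} guarantees that all the resulting composite operators remain in $\kM$.
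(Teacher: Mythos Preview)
Your proposal is correct and follows essentially the same route as the paper's proof: identify $\dom(T_1+T_2)$ and $\dom(T_1T_2)$ via Remark \ref{rem:aff_lat}, use Lemma \ref{lem:aff_times_bdd_op}-(ii) to see that $A\in\kM$, and then verify that $(T_1+T_2)B=A$ and $(T_1T_2)B=A$. The only cosmetic difference is that the paper carries out these last verifications as operator identities (inserting $B_iB_i^\dagger$ where needed) rather than pointwise, and leaves the nullspace containment as ``straightforward,'' whereas you spell it out explicitly.
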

\begin{proof}

\begin{itemize}[leftmargin=0.8cm, itemsep=0.2cm]
    \item[(i)] Using Lemma \ref{lem:aff_times_bdd_op}-(ii), the operators $A, B$ lie in $\kM$ and it is straightforward to see that $\nul (B) \subseteq \nul(A)$. 
Note that, $$\dom(T_1 + T_2) = \dom(T_1) \cap \dom(T_2) = \ran(B_1) \cap \ran(B_2) = \ran(B_1 \boxdot B_2) = \ran(B).$$ 
Since $T_i B_i = A_i \, (i= 1, 2)$ we have
\begin{align*} 
(T_1 + T_2)B &= (T_1 + T_2) (B_1 \boxdot B_2) =  T_1 (B_1 \boxdot B_2) +  T_2 (B_1 \boxdot B_2)\\
&= T_1 B_1 B_1 ^{\dagger} (B_1 \boxdot B_2) + T_2 B_2 B_2 ^{\dagger} (B_1 \boxdot B_2)\\
&=  A_1 B_1 ^{\dagger} (B_1 \boxdot B_2) + A_2 B_2 ^{\dagger} (B_1 \boxdot B_2) \\
&= A.
\end{align*}
Thus $T_1 + T_2 = A/B$.
    \item[(ii)] From Remark \ref{rem:aff_lat}, note that $A_2 \invran B_1 \in \kM$, and, 
\begin{equation*}
    A_2^{-1} \big( \ran(B_1) \big) = \ran(A_2 \invran B_1),
\end{equation*}
which implies that $$\ran \left( A_2(A_2\invran B_1) \right) = A_2 \ran(A_2 \invran B_1) \subseteq  \ran(B_1).$$ Hence by Lemma \ref{lem:aff_times_bdd_op}-(ii), it follows that $A = (A_1/B_1) (A_2 (A_2 \invran B_1)) \in \kM$. Clearly, $B\in \kM$. Furthermore,
\begin{align*}
\dom\left( T_1T_2  \right) &= \left\{ x \in \dom(T_2) : T_2x \in \dom(T_1) \right\} = \left\{ B_2 x : A_2x \in \ran(B_1), x\in \sH \right\}\\
&= B_2\left( A_2^{-1} \big( \ran(B_1) \big) \right)  
= B_2\left( \ran(A_2\invran B_1) \right) = \ran \left( B_2 (A_2\invran B_1) \right)\\
&= \ran(B).    
\end{align*}

A straightforward algebraic computation shows that,
\begin{align*} (T_1T_2)B = T_1 T_2 B_2 (A_2\invran B_1) =  T_1 A_2 (A_2\invran B_1) = A,
\end{align*}
whence $T_1 T_2 = A/B$. \qedhere
\end{itemize}
\end{proof}

Below we show that $\afm$ is closed under the unary operation, Kaufman-inverse, and give a quotient representation of $T^\dagger$ for $T\in \afm$.

\begin{thm}
\label{thm:m_aff_is_kau_closed}
\textsl{
Let $\kM$ be a von Neumann algebra acting on the Hilbert space $\sH$, and let $T \in \afm$. Let $A/B$ be a quotient representation of $T$ with $A, B \in \kM$ satisfying $\nul (A) \subseteq \nul (B)$. Then
\begin{itemize}
    \item[(i)] $\raR(T), \nulN(T)$ are both contained in $\kM$. In fact, $\nulN(T) = \raR \big( B\nulN(A) \big)$.
    \item[(ii)] $T^{\dagger}$ is contained in $\afm$ with quotient representation given by 
    \[
    T^{\dagger} = \Big( \big( I- \nulN(T) \big) \, B \Big)/A.
    \]
    \item[(ii)] $T$ is one-to-one if and only if $\nul (A) = \nul (B)$, in case of which $T^{-1}$ lies in $\afm$ with quotient representation $B/A$. 
\end{itemize}
}
\end{thm}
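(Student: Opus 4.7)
The plan for part (i) is to unwind the definition of $T=A/B$ directly. Since $\dom(T)=\ran(B)$ and $T(Bx)=Ax$ for every $x\in\sH$, one reads off immediately that $\ran(T)=\ran(A)$, whence $\raR(T)=\raR(A)\in\kM$. For the null projection, a vector $y=Bx\in\ran(B)=\dom(T)$ lies in $\nul(T)$ iff $Ax=0$, so
\[
\nul(T)=B\big(\nul(A)\big)=\ran\big(B\,\nulN(A)\big),
\]
which gives $\nulN(T)=\raR\big(B\,\nulN(A)\big)\in\kM$ since $B\,\nulN(A)$ lies in $\kM$.

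For part (ii), I will let $S:=\big((I-\nulN(T))B\big)/A$ and verify it equals $T^{\dagger}$. First I need the quotient to be syntactically well-defined, i.e.\ that $\nul(A)\subseteq\nul\big((I-\nulN(T))B\big)$: for $x\in\nul(A)$ one has $Bx\in B\big(\nul(A)\big)\subseteq\overline{\nul(T)}=\ran\nulN(T)$, hence $(I-\nulN(T))Bx=0$. By part (i) and Lemma~\ref{lem:aff_times_bdd_op}-(ii), both numerator and denominator lie in $\kM$, so $S\in\afm$. Now $\dom(S)=\ran(A)=\ran(T)=\dom(T^{\dagger})$, and for every $x\in\sH$,
\[
S\big(Ax\big)=(I-\nulN(T))Bx,\qquad T^{\dagger}\big(T(Bx)\big)=(I-\nulN(T))(Bx),
\]
where I use the relation $A=TB$ so that $Ax=T(Bx)$. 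These two identities say $S$ and $T^{\dagger}$ agree pointwise on $\ran(A)=\ran(T)$, and since their domains coincide this yields $S=T^{\dagger}$. (Alternatively, one can invoke Remark~\ref{rem:kauf_inv_2}: the identity $SA=I-\nulN(A)$ forces $S\supseteq A^{\dagger}$, but a domain count then gives equality in the appropriate sense; the direct verification above is cleaner.)

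Part (iii) drops out as an immediate corollary of (i) and (ii). Indeed, $T$ is one-to-one iff $\nulN(T)=0$ iff $\raR\big(B\,\nulN(A)\big)=0$ iff $B\,\nulN(A)=0$, i.e.\ $\nul(A)\subseteq\nul(B)$; combined with the standing hypothesis $\nul(B)\subseteq\nul(A)$, this is equivalent to $\nul(A)=\nul(B)$. When this holds, the quotient representation in (ii) collapses to $T^{-1}=T^{\dagger}=\big(I\cdot B\big)/A=B/A$, since $I-\nulN(T)=I$.

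I do not anticipate any real obstacle here; the only points worth pausing over are (a) the nullspace containment needed for $\big((I-\nulN(T))B\big)/A$ to be a legitimate quotient representation, which rests on the explicit formula $\nulN(T)=\raR(B\,\nulN(A))$ from (i), and (b) making sure the domains of $S$ and $T^{\dagger}$ coincide before comparing their values, which is why the computation of $\ran(T)=\ran(A)$ in (i) is carried out first.
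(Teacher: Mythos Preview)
Your proof is correct and follows essentially the same line as the paper's: both arguments unwind the quotient $T=A/B$ to read off $\ran(T)=\ran(A)$ and $\nul(T)=B(\nul(A))$, then verify directly that the map $Ax\mapsto(I-\nulN(T))Bx$ realizes $T^{\dagger}$, with (iii) falling out by specialization. The only minor remark is that your citation of Lemma~\ref{lem:aff_times_bdd_op}-(ii) for the numerator is superfluous---$(I-\nulN(T))B$ is just a product of bounded operators in $\kM$---and the parenthetical alternative via Remark~\ref{rem:kauf_inv_2} does not quite apply here (that remark concerns $A^{\dagger}$ for bounded $A$, not $T^{\dagger}$), but since you correctly flag the direct verification as the actual argument this does not affect the proof.
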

\begin{proof} 
\begin{itemize}[leftmargin=0.8cm, itemsep=0.2cm]
    \item[(i)] Clearly $\raR(T) = \raR(A)$, which  lies in $\kM$. Since $T$ is given by the mapping $Bx \mapsto Ax$ ($x \in \sH$), note that \begin{equation}
\label{eqn:T_null}
\nul(T) = \{Bx:Ax = 0, x\in \sH\} = B\Big(\nul(A)\Big).
\end{equation}
Thus $\nulN (T) = \raR \big( B\nulN(A) \big)$, which lies in $\kM$.

\item[(ii)] From equation (\ref{eqn:T_null}), note that the mapping $Ax \mapsto \big( I-\nulN(T) \big)Bx$ ($x \in \sH$) is well-defined and in fact, gives the Kaufman inverse of $T$ (see Definition \ref{def:kaufman_inv}). Thus,
\[
T^{\dagger} = \Big(\big( I-\nulN(T) \big) \; B\Big)/A.
\]
From this expression it is also clear that if $T\in \afm$, then we also have $T^\dagger \in\afm$, that is, $\afm$ is closed under Kaufman-inverse.

\item[(iii)] Keeping in mind the nullspace condition $\nul(B) \subseteq \nul(A)$ and equation (\ref{eqn:T_null}), we have the following equivalences, 

\begin{align*}
T \text{ is one-to-one } &\Longleftrightarrow \nul (T) = \{ 0 \}_{\sH} \Longleftrightarrow B\big( \nul(A) \big) = \{ 0 \}_{\sH}\\
&\Longleftrightarrow \nul(A) \subseteq \nul(B) \Longleftrightarrow \nul(A) = \nul(B).
\end{align*}
When $T$ is one-to-one, it is clear that $T^{-1}$ is given by the mapping $Ax \mapsto Bx$ ($x \in \sH$) so that $T^{-1} = B/A$. \qedhere
\end{itemize}
\end{proof}

The adjoint of an unbounded operator (not necessarily densely-defined) is defined in Definition \ref{def:adj_not_dd_op}. Below we show that $\afm$ is closed under adjoint. 
\begin{thm}
\label{thm:adj_of_aff_opes}
\textsl{
Let $A, B\in \kM$ with $\nul(B) \subseteq \nul(A)$. Then
    \[
        (A/B)^* = (B^*)^\dagger A^*.
    \]
Thus for $T \in \afm$, its adjoint, $T^*$, also lies in $\afm$. In other words, $\afm$ is $*$-closed.
}
\end{thm}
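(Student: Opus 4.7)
The plan is to compute $(A/B)^*$ directly from Definition \ref{def:adj_not_dd_op} and match it with $(B^*)^\dagger A^*$. Writing $T = A/B$, we have $\dom(T) = \ran(B)$, $T(Bx) = Ax$ for every $x \in \sH$ (using $\nul(B) \subseteq \nul(A)$), and $\overline{\dom(T)} = \overline{\ran(B)} = \nul(B^*)^\perp$. On the other hand, $(B^*)^\dagger A^*$ is defined precisely when $A^*y \in \ran(B^*)$, and sends such a $y$ to $(I - \nulN(B^*))u$ for any $u$ with $B^*u = A^*y$; in particular its range lies in $\nul(B^*)^\perp$, matching the codomain in the paper's definition of the adjoint.

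The key step is to identify $\dom(T^*)$. If $y \in \dom(T^*)$, with $z := T^*y \in \overline{\dom(T)}$, then for all $v \in \sH$,
\[
\ipdt{v}{A^*y} = \ipdt{Av}{y} = \ipdt{T(Bv)}{y} = \ipdt{Bv}{z} = \ipdt{v}{B^*z},
\]
so $A^*y = B^*z \in \ran(B^*)$. Conversely, if $A^*y = B^*u$ for some $u \in \sH$, the same chain in reverse shows that $x \mapsto \ipdt{Tx}{y}$ is continuous on $\dom(T) = \ran(B)$, so $y \in \dom(T^*)$. Hence $\dom(T^*) = \{y \in \sH : A^*y \in \ran(B^*)\} = \dom\bigl((B^*)^\dagger A^*\bigr)$.

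To check that the two operators agree on this common domain, the argument above also identifies $z = T^*y$ as the unique element of $\nul(B^*)^\perp$ satisfying $B^*z = A^*y$. By definition of the Kaufman inverse (and since $B^*$ is everywhere-defined), $(B^*)^\dagger A^*y = (B^*)^\dagger B^*u = (I - \nulN(B^*))u$ lies in $\nul(B^*)^\perp$ and maps to $A^*y$ under $B^*$; uniqueness then forces $T^*y = (B^*)^\dagger A^*y$. This establishes the formula $(A/B)^* = (B^*)^\dagger A^*$. Since $B^* \in \kM \subseteq \afm$, Theorem \ref{thm:m_aff_is_kau_closed} gives $(B^*)^\dagger \in \afm$, and Theorem \ref{thm:m_aff_monoid}(ii) yields $(B^*)^\dagger A^* \in \afm$, proving $\afm$ is $*$-closed. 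The main subtlety is ensuring that the codomain choice in Definition \ref{def:adj_not_dd_op} ($\overline{\dom(T)}$ rather than $\sH$) is compatible with the Kaufman-inverse output; this is precisely what the identity $\overline{\ran(B)} = \nul(B^*)^\perp$ guarantees.
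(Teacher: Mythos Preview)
Your proof is correct and follows essentially the same route as the paper: both first identify $\dom(T^*) = (A^*)^{-1}(\ran(B^*)) = \dom((B^*)^\dagger A^*)$ via the identity $\langle Av, y\rangle = \langle Bv, z\rangle$, and then verify agreement on this domain. Your uniqueness argument (that $B^*$ is injective on $\nul(B^*)^\perp$, so the solution of $B^*z = A^*y$ in $\overline{\ran(B)}$ is unique) is just a cleaner phrasing of the paper's orthogonality argument, where the difference $v = (AB^\dagger)^*y - (B^*)^\dagger A^*y$ is shown to lie in $\overline{\ran(B)}$ and be orthogonal to it.
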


\begin{proof} We prove the assertion in two steps below.

\noindent \textbf{Claim 1.} $\dom \left( (A/B)^* \right) 
    = (A^*)^{-1}\Big( \ran(B^*) \Big) 
    = \dom\left( (B^*)^\dagger A^* \right).$
\vskip 0.02in
\noindent {\it Proof of Claim 1:} Note that
$
    \dom\left( (B^*)^\dagger A^* \right)= (A^*)^{-1}\Big( \ran(B^*) \Big),
$
and from Definition \ref{def:adj_not_dd_op},
\begin{align}
    \dom((A/B)^*) &= \big\{ y \in \sH:  x \mapsto \ipdt{AB^{\dagger}x}{y} \in \C \text{ is continuous on } \dom \big( AB^{\dagger} \big)  \big\}  \nonumber \\
    &= \big\{ y \in \sH:  Bx \mapsto \ipdt{Ax}{y} \in \C \text{ is continuous on } \ran(B) \big\}.  
    \label{eqn:dom_adj}
\end{align}

Let $y\in \dom\left( (B^*)^\dagger A^* \right)$. Then there exists $z \in \sH$ such that $A^* y = B^* z$. For all $x \in \sH$, we have, $$\ipdt{Ax}{y}= \ipdt{x}{A^*y}=\ipdt{x}{B^*z}=\ipdt{Bx}{z}.$$ Since the mapping, $Bx \mapsto \ipdt{Bx}{z}(=\ipdt{Ax}{y})$, is continuous on $\ran(B)$, by (\ref{eqn:dom_adj}) we have $y\in \dom((AB^\dagger)^*)$. Thus, 
\begin{equation}
\label{eqn:adj_of_maff1}
\dom\left( (B^*)^\dagger A^* \right) \subseteq \dom\left( (AB^\dagger)^*\right).
\end{equation}

For the converse, suppose $y \in \dom((AB^\dagger)^*)$ so that the mapping, $Bx \mapsto \ipdt{Ax}{y}$, is continuous. By the Riesz representation theorem, there exists a $z\in \overline{\ran(B)}$ such that $\ipdt{Ax}{y}=\ipdt{Bx}{z}$ for all $x\in \sH$. Thus $A^*y = B^*z$ and $y\in (A^*)^{-1}\Big( \ran(B^*) \Big)$. Thus, 
\begin{equation}
\label{eqn:adj_of_maff2}
\dom\left( (AB^\dagger)^*\right) \subseteq \dom\left( (B^*)^\dagger A^* \right).
\end{equation}

From (\ref{eqn:adj_of_maff1}) and (\ref{eqn:adj_of_maff2}), we conclude that $\dom\left( (AB^\dagger)^*\right) = \dom\left( (B^*)^\dagger A^* \right).
$
\vspace{0.2cm}

\noindent \textbf{Claim 2.} $(A/B)^* y = (B^*)^\dagger A^* y$ for every vector $y$ in $(A^*)^{-1}\Big( \ran(B^*) \Big)$.
\vskip 0.02in
\noindent {\it Proof of Claim 2.} Let $z\in \sH$ be such that $A^*y = B^*z$. Then, $$(B^*)^\dagger A^* y = (B^*)^\dagger B^* z = \big( I - \nulN(B^*) \big) z =  \raR(B)z \in \overline{\ran(B)} .$$
 From the definition of $T^*$, it is clear that $\ran(T^*)\subseteq \overline{\dom(T)}$ whence 
\[
(AB^\dagger)^*y \in \ran\left( (AB^\dagger)^* \right)\subseteq \overline{\ran(B)}.
\]
Thus the vector $v:=(AB^\dagger)^*y - (B^*)^\dagger A^*y$ is in $\overline{\ran(B)}$. For every vector $x\in \sH$, we have 
\begin{align*}
\ipdt{(AB^\dagger)^*y}{Bx} &= \ipdt{y}{AB^{\dagger}Bx} = \ipdt{y}{Ax} = \ipdt{A^*y}{x}= \ipdt{B^*z}{x}  \\
&= \ipdt{z}{Bx} = \ipdt{z}{\raR(B)Bx}  = \ipdt{\raR(B)z}{Bx} \\
&= \ipdt{(B^*)^\dagger A^*y}{Bx},
\end{align*}
which implies that $v$ is orthogonal to $\overline{\ran(B)}$. Thus $v=0$, that is, $(AB^\dagger)^* y = (B^*)^\dagger A^* y$.

Since $(B^*)^\dagger, A^* \in \afm$, by Theorem \ref{thm:m_aff_monoid}-(ii), we conclude that $(A/B)^* = (B^*)^\dagger A^*$ is in $\afm$. 
\end{proof}

\subsection{Functoriality of the Construction $\kM \mapsto \afm$}
\label{subsec:functoriality_m_to_afm}

For a unital normal $*$-homomorphism $\Phi$ between represented von Neumann algebras $\kM$ and $\kN$, we define a mapping $\Phi_{\tn\aff}$ between their respective near-semirings of affiliated operators, $\afm$ and $\afn$, and show that it respects the four algebraic operations discussed in \S \ref{subsec:alg_str_m_aff}; In particular, it is a near-semiring homomorphism. Thus the construction $$\kM \mapsto \afm, \Phi \mapsto \Phi_{\tn\aff},$$ is functorial. In Theorem \ref{thm:uniqueness_phi_aff}, we argue that $\Phi_{\tn\aff}$ is the canonical extension of $\Phi$ to $\afm$ (in an algebraic sense) by showing that it is the unique extension of $\Phi$ which respects product and Kaufman-inverse; in Theorem \ref{thm:m_aff_monoid}-(i), we have already noted that $\Phi_{\tn\aff}$ respects sum and adjoint.

\begin{prop}
\label{prop:phi_aff_is_well_def}
\textsl{
Let $(\kM; \sH)$ and $(\kN; \sK)$ be represented von Neumann algebras and $\Phi:\kM\to \kN$ be a unital normal $*$-homomorphism. For $i = 1, 2$, let $A_i, B_i$ be operators in $\kM$ with $\nul (B_i) \subseteq \nul (A_i)$. Then we have the following:
\begin{itemize}
    \item[(i)] $\nul \big( \Phi(B_i) \big) \subseteq \nul \big( \Phi(A_i) \big)$;
    \item[(ii)] If $A_1/B_1 \subseteq A_2/B_2$, then $\Phi(A_1)/\Phi(B_1) \subseteq \Phi(A_2)/\Phi(B_2)$.
    \item[(iii)] If $A_1/B_1 = A_2/B_2$, then $\Phi(A_1)/\Phi(B_1) =\Phi(A_2)/\Phi(B_2).$
\end{itemize}
}
\end{prop}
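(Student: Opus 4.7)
The plan is to prove the three parts in order, with (i) serving as a prerequisite for (ii) and (iii), and (ii) then yielding (iii) by a simple mutual-containment argument.

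For part (i), I would translate the nullspace condition into an algebraic identity in $\kM$ that can be pushed through $\Phi$. Specifically, the inclusion $\nul(B_i) \subseteq \nul(A_i)$ is equivalent to $A_i\,\nulN(B_i) = 0$, since $\nul(B_i) = \ran(\nulN(B_i))$ and $A_i$ annihilates $\nul(B_i)$ precisely when $A_i\,\nulN(B_i) = 0$. Applying the homomorphism $\Phi$ yields $\Phi(A_i)\,\Phi(\nulN(B_i)) = 0$, and by Lemma~\ref{lem:morph_prop} we have $\Phi(\nulN(B_i)) = \nulN(\Phi(B_i))$. Hence $\Phi(A_i)\,\nulN(\Phi(B_i)) = 0$, which is equivalent to $\nul(\Phi(B_i)) \subseteq \nul(\Phi(A_i))$.

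For part (ii), I plan to use Lemma~\ref{lem:ext_maff} as the main lever. By the forward direction applied in $\kM$, the inclusion $A_1/B_1 \subseteq A_2/B_2$ produces an operator $C \in \kM$ with $A_1 = A_2 C$ and $B_1 = B_2 C$. Applying the unital normal homomorphism $\Phi$ yields $\Phi(A_1) = \Phi(A_2)\,\Phi(C)$ and $\Phi(B_1) = \Phi(B_2)\,\Phi(C)$ in $\kN$. Part (i) guarantees that the nullspace conditions required to form the quotients $\Phi(A_i)/\Phi(B_i)$ in $\afn$ are satisfied. The converse direction of Lemma~\ref{lem:ext_maff}, now applied in $\kN$ with the witness $\Phi(C)$, yields the desired inclusion $\Phi(A_1)/\Phi(B_1) \subseteq \Phi(A_2)/\Phi(B_2)$.

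For part (iii), the equality $A_1/B_1 = A_2/B_2$ is equivalent to the conjunction of the two inclusions $A_1/B_1 \subseteq A_2/B_2$ and $A_2/B_2 \subseteq A_1/B_1$. Invoking part (ii) twice (once with the roles of the indices swapped) gives both $\Phi(A_1)/\Phi(B_1) \subseteq \Phi(A_2)/\Phi(B_2)$ and $\Phi(A_2)/\Phi(B_2) \subseteq \Phi(A_1)/\Phi(B_1)$, whence the two quotients in $\afn$ coincide.

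I do not expect a genuine obstacle here: the heavy lifting has been done in Lemmas~\ref{lem:morph_prop} and~\ref{lem:ext_maff}, which convert inclusions of semi-closed quotients into purely algebraic factorization statements that transport through $\Phi$. The only conceptual subtlety is recognizing that part (i) must be established first so that the expressions $\Phi(A_i)/\Phi(B_i)$ in (ii) and (iii) are syntactically meaningful as elements of $\afn$.
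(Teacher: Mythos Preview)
Your proposal is correct and follows essentially the same route as the paper: part (i) via Lemma~\ref{lem:morph_prop}, part (ii) by transporting the factorization witness $C$ from Lemma~\ref{lem:ext_maff} through $\Phi$, and part (iii) as an immediate consequence of (ii). The only difference is that you spell out the details in (i) and (iii) a bit more explicitly than the paper does.
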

\begin{proof}
\begin{itemize}[leftmargin=0.8cm, itemsep=0.2cm]
    \item[(i)] Follows from Lemma \ref{lem:morph_prop}. Thus the quotients in parts (ii) and (iii) make sense. 
    \item[(ii)] By Lemma \ref{lem:ext_maff}, there is an operator $C \in \kM$ such that $A_1 = A_2C, B_1 = B_2 C$. Thus $\Phi(A_1) = \Phi(A_2) \Phi(C), \Phi(B_1) = \Phi(B_2) \Phi(C)$. Again from Lemma \ref{lem:ext_maff} in the context of $\kN$, the desired conclusion follows.
    \item[(iii)] Immediately follows from part (ii). \qedhere
\end{itemize}
\end{proof}

\begin{definition}
\label{defn:Phi_aff}
Let $(\mathscr{M}; \sH), (\mathscr{N}; \sK)$ be represented von Neumann algebras and $\Phi : \mathscr{M} \to \mathscr{N}$ be a unital normal $*$-homomorphism. Let $\Phi_{\tn\aff} : \afm \to \afn$ be the mapping, which for every pair of operators $A, B \in \kM$ with $\nul (B) \subseteq \nul (A)$, sends $A/B$ to $\Phi(A)/\Phi(B)$; by Proposition \ref{prop:phi_aff_is_well_def}, this mapping makes sense and is well-defined. In short,
\[
\Phi_{\tn\aff}(A/B) := \Phi(A)/\Phi(B).
\]
\end{definition}

\begin{lem}
\label{lem:phi_pres_a_b_c}
\textsl{
Let $(\mathscr{M}; \sH), (\mathscr{N}; \sK)$ be represented von Neumann algebras and $\Phi : \mathscr{M} \to \mathscr{N}$ be a unital normal $*$-homomorphism. Let $A, B, C\in \kM$ such that $\ran(C)\subseteq \ran(B)$ and $\nul(B) \subseteq \nul(A)$. Then $(A/B) C \in \kM$, $\big( \Phi(A) /\Phi(B) \big) \Phi(C) \in \kN$, and we have,
\[
    \Phi\big( (A/B) C \big) = \big( \Phi(A) /\Phi(B) \big) \Phi(C).
\]
}
\end{lem}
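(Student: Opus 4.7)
The plan is to unwind the definition of the quotient $A/B$ as $A B^{\dagger}$ and reduce everything to the already-established transport rule $\Phi(B^{\dagger}A) = \Phi(B)^{\dagger}\Phi(A)$ from Lemma \ref{lem:phi_pres_douglas}-(ii). First I would check that the three membership claims make sense. The claim $(A/B)C \in \kM$ is exactly Lemma \ref{lem:aff_times_bdd_op}-(ii). For the $\kN$-side, Proposition \ref{prop:phi_aff_is_well_def}-(i) ensures the quotient $\Phi(A)/\Phi(B)$ is well-defined, and Lemma \ref{lem:phi_pres_douglas}-(i) gives $\ran(\Phi(C)) \subseteq \ran(\Phi(B))$, so Lemma \ref{lem:aff_times_bdd_op}-(ii) (applied now in $\kN$) yields $(\Phi(A)/\Phi(B))\Phi(C) \in \kN$.

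For the equality, the idea is to factor through the bounded operator $B^{\dagger}C$. Since $\ran(C)\subseteq \ran(B)$, Theorem \ref{thm:douglas_vNa} (or the discussion preceding Proposition \ref{prop:douglas_soln}) shows $B^{\dagger}C \in \kM$. For every $x \in \sH$ we have $Cx \in \ran(B) = \dom(B^{\dagger})$, so
\[
(A/B)C\, x \;=\; (A B^{\dagger})(Cx) \;=\; A(B^{\dagger}C)x,
\]
giving the operator identity $(A/B)C = A\,(B^{\dagger}C)$ inside $\kM$. Applying the homomorphism $\Phi$ and then invoking Lemma \ref{lem:phi_pres_douglas}-(ii) to move $\Phi$ past the Kaufman inverse produces
\[
\Phi\big((A/B)C\big) \;=\; \Phi(A)\,\Phi(B^{\dagger}C) \;=\; \Phi(A)\,\Phi(B)^{\dagger}\Phi(C).
\]

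The final step is to recognize the right-hand side as $(\Phi(A)/\Phi(B))\Phi(C)$. Since $\ran(\Phi(C)) \subseteq \ran(\Phi(B)) = \dom(\Phi(B)^{\dagger})$, the same pointwise unwinding as above (this time in $\kN$) gives $(\Phi(A)/\Phi(B))\Phi(C) = \Phi(A)\Phi(B)^{\dagger}\Phi(C)$, completing the proof. There is no real obstacle here — the lemma is essentially a bookkeeping statement — and the only substantive ingredient is Lemma \ref{lem:phi_pres_douglas}-(ii), which has already done the hard work of showing that the Kaufman inverse behaves well under normal homomorphisms when the Douglas range condition is satisfied.
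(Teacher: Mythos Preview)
Your proof is correct and follows essentially the same route as the paper's: both invoke Lemma~\ref{lem:aff_times_bdd_op}-(ii) on each side for the membership claims, use the Douglas lemma to place $B^{\dagger}C$ in $\kM$, write $(A/B)C = A(B^{\dagger}C)$, and then apply Lemma~\ref{lem:phi_pres_douglas}-(ii) to push $\Phi$ through. Your version spells out the pointwise verification of $(A/B)C = A(B^{\dagger}C)$ more explicitly, but the argument is the same.
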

\begin{proof}
From Lemma \ref{lem:aff_times_bdd_op}-(ii), we know that $(A/B) C \in \kM$. From Lemma \ref{lem:phi_pres_douglas}-(i) and Proposition \ref{prop:phi_aff_is_well_def}-(i), it is clear that $\ran \big( \Phi(C) \big) \subseteq \ran \big( \Phi(B) \big)$ and $\nul \big( \Phi(B) \big) \subseteq \nul \big( \Phi(A) \big)$. Again  using Lemma \ref{lem:aff_times_bdd_op}-(ii), we see that $\big( \Phi(A) /\Phi(B) \big) \Phi(C) \in \kN$. The Douglas factorization lemma (Theorem \ref{thm:douglas_vNa}) tells us that $B^\dagger C \in \kM$. Thus, 
    \begin{align*}
        \Phi \big( (A/B) C \big) &= \Phi( AB^\dagger C ) 
            = \Phi(A) \Phi(B^\dagger C) \\
            &= \Phi(A) \Phi(B)^\dagger \Phi(C), \text{ by Lemma \ref{lem:phi_pres_douglas}-(ii),} \\
            &= \big( \Phi(A) / \Phi(B) \big) \Phi(C). \qedhere
    \end{align*}
\end{proof}

\begin{thm}
\label{thm:phi_monoid_morph}
\textsl{
Let $(\mathscr{M}; \sH), (\mathscr{N}; \sK)$ be represented von Neumann algebras and $\Phi : \mathscr{M} \to \mathscr{N}$ be a unital normal $*$-homomorphism. Let $\Phi_{\tn\aff} : \afm \to \afn$ be the extension of $\Phi$ as given in Definition \ref{defn:Phi_aff}. Then for $T_1, T_2 \in \afm$, we have,
\begin{itemize}
    \item[(i)] $\Phi_{\tn\aff}(T_1+T_2) = \Phi_{\tn\aff}(T_1) + \Phi_{\tn\aff}(T_2)$;
    \item[(ii)] $\Phi_{\tn\aff}(T_1 T_2) = \Phi_{\tn\aff}(T_1) \;\Phi_{\tn\aff}(T_2)$;
    \item[(iii)] $\Phi_{\tn\aff}(T_1 ^{\dagger}) = \Phi(T_1)^{\dagger}$, and in particular, if $T \in \afm$ is non-singular, then $\Phi_{\tn\aff}(T)$ is non-singular with $\Phi_{\tn\aff}(T^{-1}) = \Phi_{\tn\aff}(T)^{-1}$;
    \item[(iv)] $\Phi_{\tn\aff}(T_1 ^*) = \Phi_{\tn\aff}(T_1)^*$.
\end{itemize}
In other words, $\Phi_{\tn\aff}$ preserves sum, product, Kaufman-inverse, adjoint.
}
\end{thm}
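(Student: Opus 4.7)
The plan is to verify each of the four properties by invoking the explicit quotient representations of sum, product, Kaufman-inverse, and adjoint established in Theorems \ref{thm:m_aff_monoid}, \ref{thm:m_aff_is_kau_closed}, and \ref{thm:adj_of_aff_opes}, and then feeding them through the three functoriality lemmas already available: Lemma \ref{lem:phi_pres_boxes} (so that $\Phi$ commutes with $\boxplus$, $\boxdot$, $\invran$), Lemma \ref{lem:phi_pres_a_b_c} (so that $\Phi$ commutes with expressions of the form $(A/B)C$), and Lemma \ref{lem:morph_prop} (so that $\Phi$ commutes with $\raR$ and $\nulN$). Throughout, I will fix quotient representations $T_i = A_i/B_i$ with $\nul(B_i)\subseteq \nul(A_i)$ and write $\Phi_{\tn\aff}(T_i) = \Phi(A_i)/\Phi(B_i)$ by Definition \ref{defn:Phi_aff}.

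For (i), apply Theorem \ref{thm:m_aff_monoid}-(i) to write $T_1+T_2 = A/B$ with $B = B_1\boxdot B_2$ and $A$ the sum of the two terms $(A_i/B_i)(B_1\boxdot B_2)$. Each summand lives in $\kM$ by Lemma \ref{lem:aff_times_bdd_op}-(ii), so applying $\Phi$ and invoking Lemmas \ref{lem:phi_pres_boxes} and \ref{lem:phi_pres_a_b_c} yields
\[
\Phi(B) = \Phi(B_1)\boxdot\Phi(B_2), \quad \Phi(A) = \sum_{i=1}^{2}\bigl(\Phi(A_i)/\Phi(B_i)\bigr)\bigl(\Phi(B_1)\boxdot \Phi(B_2)\bigr).
\]
By Theorem \ref{thm:m_aff_monoid}-(i) applied in $\kN$, this is precisely a quotient representation of $\Phi_{\tn\aff}(T_1)+\Phi_{\tn\aff}(T_2)$, giving (i). Part (ii) proceeds identically, using Theorem \ref{thm:m_aff_monoid}-(ii) with $B_2(A_2\invran B_1)$ and $(A_1/B_1)(A_2(A_2\invran B_1))$, and observing via Lemma \ref{lem:phi_pres_boxes} that $\Phi(A_2\invran B_1) = \Phi(A_2)\invran\Phi(B_1)$.

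For (iii), let $T=A_1/B_1$. Theorem \ref{thm:m_aff_is_kau_closed} gives $T^{\dagger} = \bigl((I-\nulN(T))B_1\bigr)/A_1$ with $\nulN(T)=\raR(B_1\nulN(A_1))$. Applying $\Phi$ and Lemma \ref{lem:morph_prop} twice,
\[
\Phi(\nulN(T)) = \Phi(\raR(B_1\nulN(A_1))) = \raR(\Phi(B_1)\Phi(\nulN(A_1))) = \raR(\Phi(B_1)\nulN(\Phi(A_1))),
\]
which, again by Theorem \ref{thm:m_aff_is_kau_closed}-(i) applied in $\kN$, equals $\nulN(\Phi_{\tn\aff}(T))$. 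Therefore
\[
\Phi_{\tn\aff}(T^{\dagger}) = \bigl((I-\Phi(\nulN(T)))\Phi(B_1)\bigr)/\Phi(A_1) = \bigl((I-\nulN(\Phi_{\tn\aff}(T)))\Phi(B_1)\bigr)/\Phi(A_1),
\]
which is the quotient representation of $\Phi_{\tn\aff}(T)^{\dagger}$ coming from Theorem \ref{thm:m_aff_is_kau_closed}-(ii) in $\kN$. The nonsingular case follows since $T$ being one-to-one is equivalent to $\nul(A_1)=\nul(B_1)$, a property preserved by $\Phi$ via Lemma \ref{lem:morph_prop}.

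For (iv), the cleanest route is to bootstrap from (ii) and (iii). By Theorem \ref{thm:adj_of_aff_opes}, $T_1^{*} = (B_1^{*})^{\dagger}A_1^{*}$ in $\afm$. Since $\Phi_{\tn\aff}$ restricts to $\Phi$ on $\kM$ (because $\Phi_{\tn\aff}(C) = \Phi_{\tn\aff}(C/I) = \Phi(C)/I = \Phi(C)$ for $C\in\kM$), parts (ii) and (iii) give
\[
\Phi_{\tn\aff}(T_1^{*}) = \Phi_{\tn\aff}\bigl((B_1^{*})^{\dagger}\bigr)\,\Phi_{\tn\aff}(A_1^{*}) = \bigl(\Phi(B_1)^{*}\bigr)^{\dagger}\,\Phi(A_1)^{*},
\]
which equals $\Phi_{\tn\aff}(T_1)^{*}$ by one more application of Theorem \ref{thm:adj_of_aff_opes} in $\kN$. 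No step here is conceptually hard; the main bookkeeping obstacle is ensuring that each intermediate quotient continues to satisfy the nullspace condition so that $\Phi_{\tn\aff}$ may be applied to it, but this is guaranteed at every stage by Proposition \ref{prop:phi_aff_is_well_def}-(i) combined with Lemma \ref{lem:morph_prop}.
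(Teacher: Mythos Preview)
Your proposal is correct and follows essentially the same approach as the paper's proof: each part is handled by taking the explicit quotient representation from Theorems \ref{thm:m_aff_monoid}, \ref{thm:m_aff_is_kau_closed}, \ref{thm:adj_of_aff_opes}, pushing it through $\Phi$ via Lemmas \ref{lem:phi_pres_boxes}, \ref{lem:phi_pres_a_b_c}, \ref{lem:morph_prop}, and recognizing the result as the corresponding quotient representation in $\kN$; part (iv) is bootstrapped from (ii) and (iii) exactly as in the paper. Your computation of $\Phi(\nulN(T))$ in (iii) is slightly more explicit than the paper's one-line appeal to Lemma \ref{lem:morph_prop}, but the content is identical.
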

\begin{proof} Let $T_i = A_i/B_i$, where $A_i, B_i \in \kM$ for $i = 1, 2$. Then we have 
\[
    \Phi_{\tn\aff}(T_i) = \Phi(A_i)/ \Phi(B_i), ~ (i = 1, 2).
\]
\begin{itemize}[leftmargin=0.8cm, itemsep=0.2cm]
    \item[(i)] From Theorem \ref{thm:m_aff_monoid}-(i), $T_1 + T_2= A/B$, where \begin{align*}
    A &= (A_1/B_1) (B_1\boxdot B_2) + (A_2/B_2) (B_1\boxdot B_2),\\
    B &= B_1\boxdot B_2.
    \end{align*} Using Lemma \ref{lem:phi_pres_a_b_c} and Lemma \ref{lem:phi_pres_boxes}, it is easy to see that,
    \begin{equation*}
        \begin{aligned}
            \Phi(A) &= \Big( \Phi(A_1) / \Phi(B_1) \Big) \Big( \Phi(B_1) \boxdot \Phi(B_2) \Big) + \Big( \Phi(A_2) / \Phi(B_2) \Big) \Big( \Phi(B_1) \boxdot \Phi(B_2) \Big),\\
            \Phi(B) &= \Phi(B_1) \boxdot \Phi(B_2).
        \end{aligned}
    \end{equation*}
    Now applying Theorem \ref{thm:m_aff_monoid}-(i) on the two quotients $\Phi(A_i)/\Phi(B_i)$, for $i = 1, 2$ we get,
    \[
    \Phi_{\tn\aff}(T_1) + \Phi_{\tn\aff}(T_2) = \Big(\Phi(A_1) / \Phi(B_1)\Big) + \Big(\Phi(A_2) / \Phi(B_2)\Big) = \Phi(A) / \Phi(B).
    \]
    Therefore we have,
    \begin{align*}
        \Phi_{\tn\aff}(T_1 + T_2) = \Phi_{\tn\aff}(A/B) 
                    = \Phi(A) / \Phi(B) = \Phi_{\tn\aff}(T_1) + \Phi_{\tn\aff}(T_2).
    \end{align*}    

    \item[(ii)] The proof is almost identical to that of part (i) using Theorem \ref{thm:m_aff_monoid}-(ii) in lieu of Theorem \ref{thm:m_aff_monoid}-(i).

    \item[(iii)] From Theorem \ref{thm:m_aff_is_kau_closed}-(ii), we have the following quotient representation of $T_1^{\dagger}$, $$T_1^\dagger = \Big( \big( I-\nulN(T_1) \big) B_1\Big)/A_1.$$ Since by Theorem \ref{thm:m_aff_is_kau_closed}-(i), we have $\nulN(T_1) = \raR \big( B_1 \nulN(A_1) \big)$, Lemma \ref{lem:morph_prop} tells us that $\Phi \big( \nulN(T_1) \big) = \nulN \big( \Phi_{\tn\aff}(T_1) \big)$; thus,
    \begin{align*}
        \Phi_{\tn\aff}(T_1^\dagger) 
        = \Big( \left( I - \nulN \big( \Phi_{\tn\aff}(T_1) \big) \right) \Phi(B_1) \Big) / \Phi(A_1)
        = \Phi_{\tn\aff}(T_1)^\dagger.
    \end{align*}
    
     For a non-singular (that is, one-to-one) affiliated operator $T'$, clearly $T'^{-1} = T'^{\dagger}$. From Theorem \ref{thm:m_aff_is_kau_closed}-(iii) and Lemma \ref{lem:morph_prop}, we have,
    \begin{align*}
    T_1 \text{ is one-to-one } &\implies \nulN(A_1) = \nulN(B_1) \implies \nulN \big( \Phi(A_1) \big) = \nulN \big( \Phi(B_1) \big)\\
    &\implies \Phi_{\tn\aff}(T_1) = \Phi(A_1)/\Phi(B_1) \text{ is one-to-one.}
    \end{align*}
   Thus, if $T_1$ is one-to-one, then $\Phi_{\tn\aff}(T_1$ is one-to-one and we have $\Phi_{\tn\aff}(T_1^{-1}) = \Phi_{\tn\aff}(T_1)^{-1}$.

   \item[(iv)] By Theorem \ref{thm:adj_of_aff_opes}, $T_1^* = (B_1^*)^\dagger A_1^*$. Hence using part (ii) and (iii), we get,
    \begin{align*}
        \Phi_{\tn\aff}(T_1^*) 
        &= \Phi_{\tn\aff}\left( (B_1^*)^\dagger A_1^* \right)
        \overset{\text{(ii)}}{=} \Phi_{\tn\aff}\left( (B_1^*)^\dagger \right) \Phi_{\tn\aff}(A_1^*)\\
        &\overset{\text{(iii)}}{=} \left(\Phi(B_1)^*\right)^\dagger \Phi(A_1)^*
        = \Phi_{\tn\aff}(T_1)^*. \qedhere
    \end{align*}
\end{itemize}
\end{proof}

In the theorem below, we note that $\Phi_{\tn\aff}$ is the canonical extension of $\Phi$ in an algebraic sense.

\begin{thm}
\label{thm:uniqueness_phi_aff}
\textsl{
Let $(\mathscr{M}; \sH), (\mathscr{N}; \sK)$ be represented von Neumann algebras and $\Phi : \mathscr{M} \to \mathscr{N}$ be a unital normal $*$-homomorphism.Let $\Phi_{\tn\aff} : \afm \to \afn$ be the extension of $\Phi$ as given in Definition \ref{defn:Phi_aff}.  Then  $\Phi_{\tn\aff}$ is the unique mapping from $\afm$ to $\afn$ which extends $\Phi$, and respects product and Kaufman-inverse.
}
\end{thm}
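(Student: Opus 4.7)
The plan is to show that any extension $\Psi : \afm \to \afn$ of $\Phi$ that respects product and Kaufman-inverse is forced to agree with $\Phi_{\aff}$ pointwise, by unwinding the very definition of an affiliated operator. The main point is that an arbitrary element of $\afm$ is already, by definition, built from elements of $\kM$ using only product and Kaufman-inverse, leaving no slack in which any two such extensions could differ.

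The key observation I would record first is this: for $A, B \in \kM$ with $\nul(B)\subseteq\nul(A)$, the affiliated operator $T = A/B$ agrees (as an unbounded operator on $\sH$) with the ordinary operator product $A\cdot B^{\dagger}$ of $A \in \kM\subseteq \afm$ with $B^{\dagger}\in\afm$. Indeed, since $A$ is everywhere defined, $\dom(A\cdot B^{\dagger}) = \dom(B^{\dagger}) = \ran(B) = \dom(A/B)$, and on this common domain both operators send $Bx$ to $Ax$; this is immediate from Definition \ref{def:kaufman_inv} and Definition \ref{def:affiliation}. That $B^{\dagger}\in\afm$ is provided by Theorem \ref{thm:m_aff_is_kau_closed} (or more elementarily by Lemma \ref{lem:aff_times_bdd_op}-(i)), and that $A\cdot B^{\dagger}\in\afm$ follows from Theorem \ref{thm:m_aff_monoid}-(ii).

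With this presentation in hand, the uniqueness argument writes itself. Given any $\Psi$ with the stated properties and any $T = A/B \in \afm$, I would successively apply product-preservation, Kaufman-inverse-preservation, and $\Psi|_{\kM} = \Phi$ to obtain
$$\Psi(T) \;=\; \Psi(A\cdot B^{\dagger}) \;=\; \Psi(A)\,\Psi(B^{\dagger}) \;=\; \Psi(A)\,\Psi(B)^{\dagger} \;=\; \Phi(A)\,\Phi(B)^{\dagger} \;=\; \Phi(A)/\Phi(B) \;=\; \Phi_{\aff}(T),$$
where the penultimate equality uses the same identification $\Phi(A)/\Phi(B) = \Phi(A)\,\Phi(B)^{\dagger}$ applied inside $\kN$ (with the nullspace condition $\nul(\Phi(B))\subseteq\nul(\Phi(A))$ supplied by Proposition \ref{prop:phi_aff_is_well_def}-(i)), and the final equality is Definition \ref{defn:Phi_aff}. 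Since $T$ was arbitrary, $\Psi = \Phi_{\aff}$.

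There is essentially no technical obstacle here; the substantive work — well-definedness of $\Phi_{\aff}$ and the fact that it does indeed respect product, Kaufman-inverse (and also sum and adjoint) — has been carried out in Proposition \ref{prop:phi_aff_is_well_def} and Theorem \ref{thm:phi_monoid_morph}. The present statement is the mild but conceptually satisfying remark that among the four operations that $\Phi_{\aff}$ respects, the two unary/binary operations of product and Kaufman-inverse already suffice to pin the extension down, because $\afm$ is generated as a set of expressions in $\kM$ using precisely these two operations.
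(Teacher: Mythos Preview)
Your proof is correct and follows essentially the same approach as the paper: write $T = AB^{\dagger}$, then successively apply product-preservation, Kaufman-inverse-preservation, and $\Psi|_{\kM} = \Phi$ to conclude $\Psi(T) = \Phi(A)\Phi(B)^{\dagger} = \Phi_{\aff}(T)$. The paper's proof is the same chain of equalities, just stated more tersely without the surrounding justifications you provide.
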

\begin{proof} 
Let $\Psi$ be another such extension of $\Phi$. Then for $T=AB^\dagger$ in $\afm$, 
\[
\Psi(T) = \Psi(A) \Psi(B^{\dagger}) = \Psi(A) \Psi(B)^{\dagger} = \Phi(A) \Phi(B)^{\dagger} = \Phi_{\tn\aff}(AB^{\dagger}) = \Phi_{\tn\aff}(T). \qedhere
\]
\end{proof}

We rephrase Proposition \ref{prop:phi_aff_is_well_def}-(ii) in terms of $\Phi_{\tn\aff}$ and note that $\Phi_{\tn\aff}$ is well-behaved with respect to extensions of operators. In \S\ref{sec:misc_alg_props} and \S \ref{sec:krein_friedrichs}, we will see other algebraic properties that are preserved by $\Phi_{\tn\aff}$.

\begin{thm}
\label{thm:phi_pres_ext}
\textsl{
Let $(\mathscr{M}; \sH), (\mathscr{N}; \sK)$ be represented von Neumann algebras and $\Phi : \mathscr{M} \to \mathscr{N}$ be a unital normal $*$-homomorphism. Let $\Phi_{\tn\aff} : \afm \to \afn$ be the extension of $\Phi$ as given in Definition \ref{defn:Phi_aff}. Let $T, T_0$ be operators in $\afm$ with $T_0$ being an extension of $T$. Then $\Phi_{\tn\aff}(T_0)$ is an extension of $\Phi_{\tn\aff}(T)$.
}
\end{thm}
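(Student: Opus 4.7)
The plan is to observe that Theorem~\ref{thm:phi_pres_ext} is essentially a reformulation of Proposition~\ref{prop:phi_aff_is_well_def}-(ii) in the $\Phi_{\tn\aff}$-notation, so the proof reduces to unwrapping Definition~\ref{defn:Phi_aff}. First I would pick quotient representations $T = A/B$ and $T_0 = A_0/B_0$ with $A, B, A_0, B_0 \in \kM$ satisfying $\nul(B) \subseteq \nul(A)$ and $\nul(B_0) \subseteq \nul(A_0)$, which exist by definition of $\afm$. By Definition~\ref{defn:Phi_aff}, this yields
\[
    \Phi_{\tn\aff}(T) = \Phi(A)/\Phi(B), \qquad \Phi_{\tn\aff}(T_0) = \Phi(A_0)/\Phi(B_0).
\]

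Next, since $T \subseteq T_0$, that is, $A/B \subseteq A_0/B_0$, a direct application of Proposition~\ref{prop:phi_aff_is_well_def}-(ii) gives
\[
    \Phi(A)/\Phi(B) \subseteq \Phi(A_0)/\Phi(B_0),
\]
which is precisely the statement $\Phi_{\tn\aff}(T) \subseteq \Phi_{\tn\aff}(T_0)$. Thus $\Phi_{\tn\aff}(T_0)$ is an extension of $\Phi_{\tn\aff}(T)$, as desired.

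There is no substantive obstacle in this proof; the real content has been absorbed into Proposition~\ref{prop:phi_aff_is_well_def}-(ii) (which in turn rests on Lemma~\ref{lem:ext_maff} applied first in $\kM$ and then in $\kN$, with the intertwining operator $C \in \kM$ being transported via $\Phi$ to $\Phi(C) \in \kN$). The only point worth flagging is that the conclusion is independent of the chosen quotient representations of $T$ and $T_0$: this is guaranteed by the well-definedness of $\Phi_{\tn\aff}$ established in Proposition~\ref{prop:phi_aff_is_well_def}-(iii). The proof therefore fits in a few lines.
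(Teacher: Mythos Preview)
Your proof is correct and matches the paper's approach exactly: the paper's proof is the single line ``Immediate from Proposition~\ref{prop:phi_aff_is_well_def}-(ii),'' and you have simply made explicit the unwrapping of Definition~\ref{defn:Phi_aff} that this entails.
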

\begin{proof}
Immediate from Proposition \ref{prop:phi_aff_is_well_def}-(ii).
\end{proof}

In our pursuit of functoriality, we have taken primarily an algebraic viewpoint of the set of affiliated operators. An alternative geometric viewpoint becomes available by looking at the graphs of affiliated operators. Let $T \in \afm$. From equation (\ref{eqn:graph_formula}), clearly $\graph{T}$ is in $\affs{M_2(\kM)}$. Theorem \ref{thm:graph_phi} below shows that it is possible to provide a geometric definition of $\Phi_{\tn\aff}$ by considering the mapping induced by $\Phi$ (more precisely, $\Phi_{(2)}$) on the affiliated subspaces for $M_2(\kM)$.
    
\begin{thm}
\label{thm:graph_phi}
\textsl{
Let $(\mathscr{M}; \sH), (\mathscr{N}; \sK)$ be represented von Neumann algebras and $\Phi : \mathscr{M} \to \mathscr{N}$ be a unital normal $*$-homomorphism. Let $\Phi_{\tn\aff} : \afm \to \afn$ be the extension of $\Phi$ as given in Definition \ref{defn:Phi_aff}. Then the following diagram commutes.
}
\[
\begin{tikzcd}
\afm \arrow{r}{\tn{Graph}} \arrow[swap]{d}{\Phi_{\tn\aff}} & \tn{Aff}_{s}\Big( M_2(\kM) \Big) \arrow{d}{\big( \Phi_{(2)} \big)_{\tn\aff}^s} \\
\afn \arrow{r}{\tn{Graph}} & \tn{Aff}_{s}\Big( M_2(\kN) \Big)
\end{tikzcd}
\]
\textsl{
In other words, $\Phi_{\tn\aff}(T)$ is the unique operator in $\afn$ which satisfies,
\[
\graph{\Phi_{\tn\aff}(T)} = (\Phi_{(2)})^{s}_{\tn\aff} \left( \graph{T} \right).
\]
}
\end{thm}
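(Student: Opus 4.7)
The plan is to unwind the definitions on both sides of the diagram and observe that they meet in the middle via the matrix $\fS_{A,B}$, which already appears in equation (\ref{eqn:graph_formula}) as the vehicle that realizes the graph of a quotient as the range of a bounded operator in $M_2(\kM)$.

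First, I would fix $T \in \afm$ and choose a quotient representation $T = A/B$ with $A, B \in \kM$ and $\nul(B) \subseteq \nul(A)$; by Definition \ref{defn:Phi_aff}, $\Phi_{\tn\aff}(T) = \Phi(A)/\Phi(B)$. Equation (\ref{eqn:graph_formula}), applied to both $A/B$ and $\Phi(A)/\Phi(B)$ (the latter being legitimate since $\nul(\Phi(B)) \subseteq \nul(\Phi(A))$ by Proposition \ref{prop:phi_aff_is_well_def}-(i)), gives
\[
\graph{T} = \ran(\fS_{A,B}) \in \affs{M_2(\kM)}, \qquad \graph{\Phi_{\tn\aff}(T)} = \ran(\fS_{\Phi(A), \Phi(B)}) \in \affs{M_2(\kN)}.
\]

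Next, I would apply the lattice homomorphism $(\Phi_{(2)})_{\tn\aff}^s$ from Theorem \ref{thm:lat_morph} to $\graph{T}$. By its very definition, this map sends $\ran(X) \mapsto \ran(\Phi_{(2)}(X))$ for $X \in M_2(\kM)$, so
\[
(\Phi_{(2)})_{\tn\aff}^s(\graph{T}) = \ran\big(\Phi_{(2)}(\fS_{A,B})\big).
\]
The final ingredient is the entry-wise identity $\Phi_{(2)}(\fS_{A,B}) = \fS_{\Phi(A), \Phi(B)}$, which is immediate from the block form of $\fS_{A,B}$ and the definition of $\Phi_{(2)}$. Chaining everything together yields the claimed commutativity:
\[
(\Phi_{(2)})_{\tn\aff}^s(\graph{T}) = \ran(\fS_{\Phi(A), \Phi(B)}) = \graph{\Phi_{\tn\aff}(T)}.
\]

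For the uniqueness clause, I would simply invoke the elementary fact that an unbounded operator on a Hilbert space is determined by its graph: if two operators in $\afn$ have the same graph as a subset of $\sK \oplus \sK$, they coincide. Hence $\Phi_{\tn\aff}(T)$ is the unique element of $\afn$ whose graph equals $(\Phi_{(2)})_{\tn\aff}^s(\graph{T})$. I do not anticipate any real obstacle here; the only subtlety worth flagging is that equation (\ref{eqn:graph_formula}) is a set equality rather than an equality up to closure, so no passage to closures is required and the functoriality of $(-)_{\tn\aff}^s$ from Theorem \ref{thm:lat_morph} can be applied verbatim to the graph viewed as an affiliated subspace for $M_2(\kM)$.
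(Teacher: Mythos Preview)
Your proposal is correct and follows essentially the same approach as the paper: pick a quotient representation, invoke equation (\ref{eqn:graph_formula}) on both sides, use the entrywise identity $\Phi_{(2)}(\fS_{A,B}) = \fS_{\Phi(A),\Phi(B)}$, and apply the definition of $(\Phi_{(2)})_{\tn\aff}^s$. Your explicit remark on uniqueness (an operator is determined by its graph) is a nice addition that the paper leaves implicit.
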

\begin{proof}
Recall the notation $\fS_{A', B'}$ from \S \ref{sec:prelim}. Let $T \in \afm$ and $A/B$ be a quotient representation of $T$ for $A, B \in \kM$ with $\nul (B) \subseteq \nul (A)$. As observed before in Proposition \ref{prop:phi_aff_is_well_def}, $\Phi(A) / \Phi(B)$ is a quotient representation of $\Phi_{\tn\aff}(T)$. Since $\fS_{\Phi(A), \Phi(B)} = \Phi_{(2)}(\fS_{A, B})$, using equation (\ref{eqn:graph_formula}) we have,
\begin{align*}
    \graph{\Phi_{\tn\aff}(T)} &\overset{(\ref{eqn:graph_formula})}{=}
    \ran\left( \fS_{\Phi(A), \Phi(B) }\right)
    = \ran \left( \Phi_{(2)} (\fS_{A, B} )\right)\\
    &= (\Phi_{(2)})^{s}_{\aff} \big( \ran \left( \fS_{A, B} \right)
    \big) \overset{(\ref{eqn:graph_formula})}{=} (\Phi_{(2)})^{s}_{\aff} \big( \graph{T} \big).\qedhere
\end{align*}
\end{proof}

\section{Closed Affiliated Operators}%
\label{sec:closed_aff_operators}%

In this section, our main goal is to establish that our definition of affiliation (Definition \ref{defn:M_aff}) expands the scope in which the term is traditionally used. Let $\kM$ be a von Neumann algebra acting on the Hilbert space $\sH$. We show that a \emph{closed} operator $T$ on $\sH$ is MvN-affiliated (see Definition \ref{def:mvn_aff}) if and only if it is affiliated according to our definition. Thus our definition does not affect the traditional usage of the term in the sense of Murray and von Neumann while providing significant algebraic simplicity; there is everything to gain and little to lose by embracing our definition of $\afm$. 

\subsection{Quotient Representation of Closed Affiliated Operators}
We have mentioned some notation below to facilitate our discussion.
\begin{align*}
    \afm &:=\{A/B: A, B \in \kM \text{ satisfying } \nul(B)\subseteq \nul(A) \}\\
    \affc{\kM} &:= \afm \cap \sC(\sH) = \{ T \in \afm : T \text{ is closed} \} \\
    \kM_{\aff} ^{cb} &:= \afm \cap \sC\sB(\sH) =  \{ T \in \afm : T \text{ is closed, and bounded on } \dom(T) \}\\
    \mvnaff{\kM} &:=\{ T \in \sC(\sH): U^*TU = T \text{ for all unitary } U \in \kM' \}.\\
\end{align*}

The following containment relations hold, $$\kM \subseteq \afm^{cb}  \subseteq \afm ^c = \afm^{\text{MvN}}.$$
Note that the last equality is the content of Theorem \ref{thm:quotient}.

\begin{remark}
\label{rem:closed_bdd}
An operator in $\afm$ is {\it closed and bounded} if and only if it is the restriction of an operator in $\kM$ to a closed affiliated subspace for $\kM$.
Note that $$\afm^{cb} = \{ AE^{\dagger} : A, E \in \kM \text{ with } E  \text{ a projection in } \kM\}.$$
\end{remark}

\begin{lem}
\label{lem:char_aff}
\textsl{
Let $\mathscr{M}$ be a von Neumann algebra acting on the Hilbert space $\sH$. A closed operator $T$ on $\sH$ is in $\afm^{\text{MvN}}$ if and only if its characteristic projection, $\rchi(T)$, is in $M_2(\mathscr{M})$.
}
\end{lem}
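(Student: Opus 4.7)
The plan is to invoke the double commutant theorem for the von Neumann algebra $M_2(\kM)$ acting on $\sH \oplus \sH$. Since $M_2(\kM) = (M_2(\kM)')'$, and any von Neumann algebra is the norm-closed linear span of its unitaries, the membership $\rchi(T) \in M_2(\kM)$ is equivalent to $\rchi(T)$ commuting with every unitary in $M_2(\kM)'$.

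First I would identify $M_2(\kM)'$ explicitly. Writing an arbitrary operator on $\sH \oplus \sH$ as a $2 \times 2$ matrix $\left[X_{ij}\right]$ with $X_{ij} \in \sB(\sH)$, and imposing commutation with the elements $\begin{pmatrix} A & 0 \\ 0 & 0 \end{pmatrix}$, $\begin{pmatrix} 0 & 0 \\ 0 & A \end{pmatrix}$ of $M_2(\kM)$ (for all $A \in \kM$) together with the off-diagonal matrix unit $\begin{pmatrix} 0 & I \\ I & 0 \end{pmatrix}$, forces $X_{12} = X_{21} = 0$, $X_{11} = X_{22}$, and $X_{11} \in \kM'$. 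Hence $M_2(\kM)' = \{U \oplus U : U \in \kM'\}$, and its unitaries are precisely $U \oplus U$ for $U$ unitary in $\kM'$.

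Next, since $T$ is closed, $\ran \rchi(T) = \graph{T}$, so for a unitary $V$ on $\sH \oplus \sH$ the commutation $V \rchi(T) = \rchi(T) V$ is equivalent to $V$ (and automatically $V^*$) preserving $\graph{T}$. Specialising to $V = U \oplus U$ for a unitary $U \in \kM'$ and unwrapping coordinates, the invariance $(U \oplus U)(x, Tx) \in \graph{T}$ for every $x \in \dom(T)$ reads: $Ux \in \dom(T)$ and $T(Ux) = U T x$, which is exactly $UT \subseteq TU$. Ranging $U$ over all unitaries of $\kM'$, this is the third equivalent formulation listed immediately after Definition \ref{def:mvn_aff}, hence is tantamount to $T \in \afm^{\textnormal{MvN}}$.

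Concatenating these three equivalences yields the biconditional. The least routine step is the explicit computation of $M_2(\kM)'$; everything else is unwinding definitions or appealing to standard facts about characteristic projections. Conceptually, this bridges Stone's geometric viewpoint via $\rchi(T)$ with the commutant-based notion of MvN-affiliation, and sets the stage for the quotient-representation result of Theorem \ref{thm:quotient}, where one leverages $\rchi(T) \in M_2(\kM)$ to extract bounded operators in $\kM$ implementing $T$.
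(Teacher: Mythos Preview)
Your proposal is correct and follows essentially the same route as the paper's proof: both identify the unitaries of $M_2(\kM)'$ as the diagonal operators $U \oplus U$ with $U$ unitary in $\kM'$, then translate commutation of $\rchi(T)$ with such operators into invariance of $\graph{T}$, and finally read off the condition $UT \subseteq TU$ (equivalently $UT = TU$) for all such $U$. The only cosmetic difference is that you package the argument as a single chain of equivalences, whereas the paper treats the two implications separately.
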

\begin{proof}
Note that every unitary operator in $M_2(\kM)'$ is of the form 
	$\begin{bmatrix}
		U & 0\\
		0 & U
	\end{bmatrix}$
	for some unitary $U\in \kM'$. 
 
 Let $T \in \afm^{\text{MvN}}$. For a vector $x\in \dom(T)$ and $U\in \kM'$, note that $Ux \in \dom(T)$, and 
	
	\[
		\begin{bmatrix}
			U & 0\\
			0 & U
		\end{bmatrix}
		\begin{bmatrix}
			x\\Tx
		\end{bmatrix}
		=
		\begin{bmatrix}
			Ux\\UTx
		\end{bmatrix}
		=
		\begin{bmatrix}
			Ux\\TUx
		\end{bmatrix}.
	\]
Thus $\text{Graph}(T)$ is invariant under the action of any unitary operator in $M_2(\kM)'$ and by  the double commutant theorem $\rchi(T)\in M_2(\kM)$.
	
Conversely, assume that $T$ is a closed operator on $\sH$ with $\rchi(T)\in M_2(\kM)$. Let $U$ be a unitary in $\kM'$. Then $\rchi(T)$ commutes with 
	$
		\begin{bmatrix}
			U & 0\\
			0 & U
		\end{bmatrix}
	$ so that 
	$
	\begin{bmatrix}
		U & 0\\
		0 & U
	\end{bmatrix}
	$ leaves $\text{Graph}(T)$ invariant, that is, 
	\[
		\begin{bmatrix}
			Ux\\
			UTx
		\end{bmatrix}
		=
		\begin{bmatrix}
			U & 0\\
			0 & U
		\end{bmatrix}
		\begin{bmatrix}
			x\\
			Tx
		\end{bmatrix}
		\in \text{Graph}(T)
		~\text{ for all }
		x \in \dom(T).
	\]
Thus the second coordinate of 
$
	\begin{bmatrix}
		Ux\\
		UTx
	\end{bmatrix}
	$
must be $T(Ux)$ so that $UTx = TUx$ for all $x\in \dom(T)$. Hence $T$ commutes with every unitary $U$ in $\kM '$, that is,  $T\in \kM_{\aff}^{\text{MvN}}$. 
\end{proof}

The following theorem generalizes \cite[Theorem 1.1]{fillmore-williams} and shows that the lattice of affiliated subspaces, $\affs{\kM}$, is precisely the set of domains of MvN-affiliated operators.

\begin{thm}
\label{thm:dom_ran}
\textsl{
Let $\mathscr{M}$ be a von Neumann algebra acting on the Hilbert space $\sH$. Let $\sV$ be a linear subspace of $\sH$. Then the following are equivalent:
\begin{itemize}
    \item[(i)] $\sV$ is the range of an operator in $\mathscr{M}$;
    \item[(ii)] $\sV$ is the range of an operator in $\afm^{\text{MvN}}$;
    \item[(iii)] $\sV$ is the domain of an operator in $\afm^{\text{MvN}}$;
    \item[(iv)] $\sV$ is the domain of an operator in $\afm$.
\end{itemize}
}
\end{thm}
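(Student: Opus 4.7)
The plan is to prove (i) $\Rightarrow$ (ii) $\Rightarrow$ (iii) $\Rightarrow$ (i) cyclically, and then close the loop with (i) $\Leftrightarrow$ (iv).

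First I would dispense with (i) $\Rightarrow$ (ii) and (ii) $\Rightarrow$ (iii). The first implication is immediate since $\kM \subseteq \afm^{\text{MvN}}$: every $A \in \kM$ is closed (by the closed graph theorem) and commutes with each unitary in $\kM'$. For (ii) $\Rightarrow$ (iii), if $T \in \afm^{\text{MvN}}$ has $\ran(T) = \sV$, then Lemma \ref{lem:unbdd} gives $T^{\dagger} \in \afm^{\text{MvN}}$, and Definition \ref{def:kaufman_inv} identifies $\dom(T^{\dagger}) = \ran(T) = \sV$.

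The substantive step is (iii) $\Rightarrow$ (i). Given $T \in \afm^{\text{MvN}}$ with $\dom(T) = \sV$, Lemma \ref{lem:char_aff} places the characteristic projection
\[
\rchi(T) = \begin{bmatrix} E_{11} & E_{12} \\ E_{21} & E_{22} \end{bmatrix}
\]
inside $M_2(\kM)$; in particular $E_{11}, E_{12} \in \kM$. Equation (\ref{eqn:dom_from_char_mat}) then gives $\sV = \ran(E_{11}) + \ran(E_{12})$, and by Remark \ref{rem:aff_lat}(i) this subspace coincides with $\ran(E_{11} \boxplus E_{12})$, where $E_{11} \boxplus E_{12} \in \kM$. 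Hence $\sV$ is the range of an operator in $\kM$, establishing (i).

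Finally, I would close the loop via (i) $\Leftrightarrow$ (iv). If $\sV = \ran(B)$ for some $B \in \kM$, note that $B = B/I$ exhibits $B$ as an element of $\afm$, and Theorem \ref{thm:m_aff_is_kau_closed} yields $B^{\dagger} \in \afm$ with $\dom(B^{\dagger}) = \ran(B) = \sV$, giving (iv). Conversely, any operator in $\afm$ with domain $\sV$ is of the form $A/B$, and the identity $\dom(A/B) = \ran(B)$ (recorded at the start of Section \ref{sec:aff_operators}) recovers (i). The only genuinely nontrivial ingredient in the chain is (iii) $\Rightarrow$ (i), which rests on the characteristic-projection description of MvN-affiliation together with the $\boxplus$-operation provided by Theorem \ref{thm:boxplusdot}; once these are in hand, everything else is a formal unwinding of the quotient identity $\dom(A/B) = \ran(B)$ and the stability of the relevant classes under Kaufman inverse.
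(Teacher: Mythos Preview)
Your proof is correct and follows essentially the same route as the paper: the cycle (i) $\Rightarrow$ (ii) $\Rightarrow$ (iii) $\Rightarrow$ (i) together with (i) $\Leftrightarrow$ (iv), with the only substantive step (iii) $\Rightarrow$ (i) handled via Lemma \ref{lem:char_aff}, equation (\ref{eqn:dom_from_char_mat}), and the $\boxplus$-operation of Remark \ref{rem:aff_lat}. The references you cite for the auxiliary steps (Lemma \ref{lem:unbdd} for $T^{\dagger} \in \afm^{\text{MvN}}$, Theorem \ref{thm:m_aff_is_kau_closed} for $B^{\dagger} \in \afm$) are appropriate and match the paper's own justifications.
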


\begin{proof}
\noindent {\large (i) $\Rightarrow$ (ii)}. Straightforward to see.
\vskip 0.08in

\noindent {\large (ii) $\Rightarrow$ (iii)}. Let $T$ be a closed operator in $\afm^{\text{MvN}}$. Then $T^{\dagger}$ is a closed operator in $\afm^{\text{MvN}}$ and $\dom(T^{\dagger}) = \ran(T)$.
\vskip 0.08in

\noindent {\large (iii) $\Rightarrow$ (i)}.
Let $\sV$ be the domain of an operator $T\in \kM_{\aff}^{\text{MvN}}$, that is, $\sV= \dom(T)$. From Lemma \ref{lem:char_aff}, the characteristic projection of $T$, $$\chi(T) = \begin{bmatrix}
E_{11} & E_{12}\\
E_{21} & E_{22}
\end{bmatrix},$$ is contained in $M_2(\kM)$. By equation (\ref{eqn:dom_from_char_mat}) and Remark \ref{rem:aff_lat}, we have $$\sV = \dom(T) = \ran(E_{11}) + \ran (E_{12}) = \ran (E_{11} \boxplus E_{12}),$$
and $E_{11} \boxplus E_{12} \in \kM$ as $E_{11}, E_{12} \in \kM$.
\vskip 0.08in
\noindent {\large (i) $\Leftrightarrow$ (iv)}. If a subspace $\sV \subseteq \sH$ is the range of $B \in \kM$, since $\dom(B^{\dagger}) = \ran(B)$, we have $\sV$ is the domain of $B^{\dagger} \in \afm$. Conversely, if $\sV$ is the domain of an operator $T$ in $\afm$ and $A/B$ is a quotient representation of $T$ for $A, B \in \kM$ with $\nul (B) \subseteq \nul (A)$, then $\sV = \dom(T) = \ran(B)$.
\end{proof}

\begin{thm}[Quotient representation of closed affiliated operators]
\label{thm:quotient}
\textsl{
Let $\mathscr{M}$ be a von Neumann algebra acting on the Hilbert space $\sH$. Let $T$ be a closed operator on $\sH$. 
\begin{itemize}
    \item[(i)]  Then $T$ is affiliated with $\kM$ in the sense of Murray and von Neumann if and only if $T \in \afm$, that is,  $$\afm^{\text{MvN}} = \afm^c.$$
    \item[(ii)] We have $T \in \afm ^c$ if and only if $T= P^{-1}AE^{\dagger}$ for operators $P, A, E \in \kM$ with $P$ being a positive one-to-one operator with dense range, and $E$ a projection. 
\end{itemize}
}
\end{thm}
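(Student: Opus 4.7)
My plan for the two inclusions is the following. For $\afm^{c} \subseteq \afm^{\text{MvN}}$, take $T = AB^{\dagger}$ with $A, B \in \kM$; any unitary $U \in \kM'$ commutes with $A$, $B$, and $\nulN(B)$, so $UB^{\dagger} = B^{\dagger}U$ on $\ran(B) = \dom(B^{\dagger})$ follows directly from $B^{\dagger}B = I - \nulN(B)$, whence $UT = TU$. Since $T$ is closed by hypothesis, $T \in \afm^{\text{MvN}}$. Conversely, given $T \in \afm^{\text{MvN}}$, Theorem \ref{thm:dom_ran} supplies $B \in \kM$ with $\ran(B) = \dom(T)$; Lemma \ref{lem:unbdd} then gives $A := TB \in \kM$ (since $\ran(B) \subseteq \dom(T)$), $\nul(B) \subseteq \nul(A)$ is automatic, and a short computation verifies $T = A/B \in \afm$.

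\textbf{Part (ii), reverse direction and reduction to the densely-defined case.} The reverse direction is a quick assembly: since $P$ is quasi-invertible, $P^{-1} = P^{\dagger} \in \afm$; together with $AE^{\dagger} \in \afm$ (Lemma \ref{lem:aff_times_bdd_op}(i)), their product lies in $\afm$ by Theorem \ref{thm:m_aff_monoid}(ii). Closedness of $T$ follows from Lemma \ref{lem:unbdd}, since $P^{\dagger}$ is closed (Lemma \ref{lem:kauf_inv_closed}) and $AE^{\dagger}$ is closed and bounded. For the forward direction, my plan is to first extend $T$ by zero to a densely-defined operator. Fix a quotient representation $T = A_{1}/B$ from (i), set $E := \raR(B) \in \kM$ (the projection onto $\overline{\dom(T)}$), and define $\tilde{T}$ on $\dom(T) + \ran(I-E)$ by extending $T$ by zero on $\ran(I-E)$. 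Its graph is the orthogonal direct sum of two closed subspaces, so $\tilde{T}$ is closed and densely-defined; since every unitary in $\kM'$ preserves both $\ran(E)$ and $\ran(I-E)$, $\tilde{T}$ is MvN-affiliated with $\kM$. A short check that $\dom(\tilde{T}) \cap \ran(E) = \dom(T)$ then gives $T = \tilde{T}E^{\dagger}$.

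\textbf{Part (ii), forward direction, main step.} For the densely-defined closed MvN-affiliated operator $\tilde{T}$, my plan is to apply the classical von Neumann transform. Since $\tilde{T}\tilde{T}^{*}$ is positive self-adjoint and MvN-affiliated with $\kM$, spectral functional calculus yields a positive contraction $P := (I + \tilde{T}\tilde{T}^{*})^{-1/2} \in \kM$ with trivial nullspace (hence quasi-invertible), satisfying $\ran(P) = \dom(\tilde{T}^{*})$. By Lemma \ref{lem:unbdd}, $A_{0} := \tilde{T}^{*}P$ lies in $\kM$, and $\tilde{T}^{*} = A_{0}P^{-1}$ as closed operators (both sides have domain $\ran(P)$ and agree on it). Taking adjoints --- using that $A_{0}P^{-1}$ is densely-defined ($\ran(P)$ is dense), $A_{0}$ is bounded, $P^{-1}$ is positive self-adjoint, and $\tilde{T}^{**} = \tilde{T}$ --- produces $\tilde{T} = (A_{0}P^{-1})^{*} = P^{-1}A_{0}^{*}$. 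Setting $A := A_{0}^{*} \in \kM$ yields $T = \tilde{T}E^{\dagger} = P^{-1}AE^{\dagger}$. The main obstacle will be carefully justifying the adjoint identity $(A_{0}P^{-1})^{*} = P^{-1}A_{0}^{*}$ and confirming that $P$ itself (not merely its MvN-affiliation class) belongs to $\kM$; both are standard once the density of $\ran(P)$ and the self-adjointness of $P^{-1}$ are in place.
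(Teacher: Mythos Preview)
Your proposal is correct and follows essentially the same architecture as the paper's proof: for part (ii), pass to a densely-defined closed affiliated operator, produce a positive quasi-invertible $P \in \kM$ whose range equals its domain, write the operator as $A_0/P$, and take the adjoint (via Theorem \ref{thm:adj_of_aff_opes}) to obtain the form $P^{-1}A E^{\dagger}$.

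The differences are minor and worth noting. The paper bypasses your extension-by-zero step entirely by working directly with $T^{*}$, which is automatically densely-defined and closed (Proposition \ref{prop:kr_I_278}); your $\tilde{T}$ is precisely the operator $TE$ appearing in the proof of Proposition \ref{prop:t_bar}, and indeed $\tilde{T}^{*} = T^{*}$, so both routes land on the same object. For the choice of $P$, the paper invokes the lattice theory already built up (Corollary \ref{cor:ran_aa_star} applied to $\dom(T^{*}) \in \affs{\kM}$) to obtain \emph{some} positive $P$ with the right range, whereas you pick the explicit von Neumann transform $P = (I+\tilde{T}\tilde{T}^{*})^{-1/2}$. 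Your choice is more concrete but carries the extra burden of verifying $\ran(P) = \dom(\tilde{T}^{*})$ (via $\dom\big((I+\tilde{T}\tilde{T}^{*})^{1/2}\big) = \dom(|\tilde{T}^{*}|) = \dom(\tilde{T}^{*})$) and that spectral calculus of the affiliated self-adjoint $\tilde{T}\tilde{T}^{*}$ lands in $\kM$; the paper's choice avoids these checks at the cost of being less explicit. Either way, the adjoint identity you flag as the main obstacle, $(A_{0}P^{-1})^{*} = P^{-1}A_{0}^{*}$, is exactly Theorem \ref{thm:adj_of_aff_opes} specialized to self-adjoint invertible $P$.
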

\begin{proof}
\begin{itemize}[leftmargin=0.8cm, itemsep=0.2cm]
    \item[(i)] Let $T = AB^{\dagger}$ for $A, B \in \kM$, and $U$ be a unitary in $\kM'$. By Lemma \ref{lem:unbdd} $B^\dagger \in \afm^{\text{MvN}}$ and thus $UT = UAB^{\dagger} = AUB^{\dagger} = AB^{\dagger}U = TU$. Hence $T \in \afm^{\text{MvN}}$, and we have,
    $$\afm ^{c} \subseteq \afm^{\text{MvN}}.$$

    Next suppose $T\in \afm^{\text{MvN}}$. By Theorem \ref{thm:dom_ran}, there is an operator $B'$ in $\mathscr{M}$ such that $\ran(B') = \dom(T)$. Then by Lemma \ref{lem:unbdd}, $TB'$ is a bounded operator in $\mathscr{M}$ with $\ran(T) = \ran(TB')$ and $\nul(B') \subseteq \nul(TB')$. Defining $A' := TB'$, we have $T = (TB')B'^{\dagger}=A'B'^\dagger$ with $\dom(T) = \ran(B')$, $\ran(T) = \ran(A')$, and $\nul(B') \subseteq \nul(A')$. Thus $T = A'/B'$ is in $\afm^c$, and we conclude that $$\afm^{\text{MvN}} \subseteq \afm ^{c}.$$

    \item[(ii)] ($\Longleftarrow$) Let $A, P, E$ be as given in the assertion. Note that $P^{-1}$ is a closed operator and $AE^{\dagger} = A|_{\ran(E)}$ is closed and bounded. Thus the operator $P^{-1} A E^{\dagger}$ is closed. Since $P^{-1}, AE^{\dagger} \in \afm$ and $\afm$ is closed under product by Theorem \ref{thm:m_aff_monoid}, $P^{-1} AE^{\dagger}$ lies in $\afm$. In summary, $P^{-1} AE^{\dagger} \in \afm^{c}$.
    \vskip 0.08in

    \noindent($\Longrightarrow$) Let $T\in \afm$ be closed. By Theorem \ref{thm:adj_of_aff_opes}, $T^* \in \afm$ and by Proposition \ref{prop:t_bar}, it is closed and densely-defined. Since $\dom(T^*) \in \affs{\kM}$, using Corollary \ref{cor:ran_aa_star}, we get a positive operator $P \in \kM$ such that $\dom(T^*) = \ran(P)$. Since $T^*$ is densely-defined, $P$ has dense range and by Proposition \ref{prop:rank_nullity}, it has trivial nullspace. Since $T^* \in \afm^{c}$, part (i) tells us that $T^*$ is in $\afm^{\text{MvN}}$. By Lemma \ref{lem:unbdd}, the operator $A' := T^*P$ lies in $\kM$, and we see that $T^* = A'P^{-1} = A'/P$. Let $E$ be the projection onto $\overline{\dom(T)}$. Using Proposition \ref{prop:t_bar}, Theorem \ref{thm:adj_of_aff_opes}, and our hypothesis that $T$ is closed, we see that, 
\[
T = \overline{T} = T^{**}E^\dagger=(A'/P)^* E^{\dagger} =  P^{-1} A'^* E^\dagger. \qedhere
\] 
\end{itemize}
\end{proof}

\subsection{Functoriality of the Construction $\mathscr{M} \mapsto \mathscr{M}_{\textnormal{aff}}^{c}$}
\label{subsec:closed_functoriality}

Since the existing literature about affiliation deals with MvN-affiliated operators, we must discuss what our theory has to say in this context. Throughout this subsection, $\kM, \kN$ are von Neumann algebras and $\Phi : \kM \to \kN$ is a unital normal $*$-homomorphism. We first demonstrate that the restriction of $\Phi_{\tn\aff}$ (see Definition \ref{defn:Phi_aff}) to $\afm ^{\text{MvN}}$ maps into $\afn ^{\text{MvN}}$, and sends densely-defined operators to densely-defined operators. After that, we direct our energies towards convincing the reader that $\Phi_{\tn\aff}$ is the canonical extension of $\Phi$ to $\afm$, from various perspectives.

\begin{thm}
\label{thm:phi_pres_closed}
\textsl{
Let $(\mathscr{M}; \sH), (\mathscr{N}; \sK)$ be represented von Neumann algebras and $\Phi : \mathscr{M} \to \mathscr{N}$ be a unital normal $*$-homomorphism. Let $\Phi_{\tn\aff} : \afm \to \afn$ be the extension of $\Phi$ as given in Definition \ref{defn:Phi_aff}. Let $T \in \afm$.
\begin{itemize}
    \item[(i)] Then $\dom \big( \Phi_{\tn\aff}(T) \big) = \Phi_{\tn\aff}^s \big( \dom (T) \big)$. If $T$ is densely-defined, then so is $\Phi_{\tn\aff}(T)$.
    \item[(ii)] If $T$ is closed and bounded, then so is $\Phi_{\tn\aff}(T)$.
    \item[(iii)] If $T$ is closed, then so is $\Phi_{\tn\aff}(T)$.
    \item[(iv)] If $T$ is pre-closed, then so is $\Phi_{\tn\aff}(T)$. Furthermore, $\overline{T} \in \afm$ and $\Phi_{\tn\aff}(\overline{T}) = \overline{\Phi_{\tn\aff}(T)}$.
\end{itemize}
Thus $\Phi_{\tn\aff}$ maps $\afm^{\text{MvN}}$ into $\afn^{\text{MvN}}$, and in fact, sends densely-defined closed affiliated operators for $\kM$ to densely-defined closed affiliated operators for $\kN$.
}
\end{thm}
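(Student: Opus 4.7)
The plan is to leverage a quotient representation of $T$ tailored to each regime—general, closed-and-bounded, closed, and pre-closed—and push it through $\Phi$ using the algebraic preservation results already established. Part (i) is essentially definitional: if $T = A/B$, then $\dom(T) = \ran(B)$ and $\dom\big(\Phi_{\tn\aff}(T)\big) = \dom\big(\Phi(A)/\Phi(B)\big) = \ran\big(\Phi(B)\big) = \Phi_{\tn\aff}^s\big(\dom(T)\big)$ by the definition of $\Phi_{\tn\aff}^s$ in Theorem \ref{thm:lat_morph}-(i); density is then preserved by Theorem \ref{thm:lat_morph}-(iv). For (ii), Remark \ref{rem:closed_bdd} gives $T = AE^\dagger$ with $A \in \kM$ and $E \in \kM$ a projection, so $\Phi_{\tn\aff}(T) = \Phi(A)\Phi(E)^\dagger$ is the restriction of $\Phi(A) \in \kN$ to $\ran\big(\Phi(E)\big)$, which is again closed and bounded.

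For (iii), the key input is Theorem \ref{thm:quotient}-(ii): any closed $T \in \afm$ has a representation $T = P^{-1}AE^\dagger$ with $A,P,E \in \kM$, $P$ a positive quasi-invertible operator, and $E$ a projection. Applying Theorem \ref{thm:phi_monoid_morph}, we get $\Phi_{\tn\aff}(T) = \Phi(P)^{-1}\Phi(A)\Phi(E)^\dagger$. The only thing to check is that $\Phi(P)$ remains quasi-invertible: it is positive because $\Phi$ is a $*$-homomorphism, and Lemma \ref{lem:morph_prop} yields $\nulN\big(\Phi(P)\big) = \Phi\big(\nulN(P)\big) = 0$, so $\Phi(P)$ has trivial nullspace and hence dense range. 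The ``if'' direction of Theorem \ref{thm:quotient}-(ii) then gives closedness of $\Phi_{\tn\aff}(T)$.

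The main subtlety is in (iv), where we must simultaneously establish pre-closedness, affiliation of $\overline{T}$, and compatibility of closure with $\Phi_{\tn\aff}$. Proposition \ref{prop:t_bar} writes $\overline{T} = T^{**}E^\dagger$, where $E$ is the projection onto $\overline{\dom(T)}$. Writing $T = A/B$, we identify $E = \raR(B) \in \kM$; combining this with Theorems \ref{thm:adj_of_aff_opes}, \ref{thm:m_aff_monoid}, and \ref{thm:m_aff_is_kau_closed} gives $\overline{T} \in \afm$. By part (iii), $\Phi_{\tn\aff}(\overline{T})$ is closed, and since it extends $\Phi_{\tn\aff}(T)$ by Theorem \ref{thm:phi_pres_ext}, $\Phi_{\tn\aff}(T)$ must be pre-closed. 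For the equality $\Phi_{\tn\aff}(\overline{T}) = \overline{\Phi_{\tn\aff}(T)}$, I would apply Proposition \ref{prop:t_bar} to the (now pre-closed) $\Phi_{\tn\aff}(T)$ to obtain $\overline{\Phi_{\tn\aff}(T)} = \Phi_{\tn\aff}(T)^{**}\tilde{E}^\dagger$, where $\tilde{E}$ projects onto $\overline{\dom\big(\Phi_{\tn\aff}(T)\big)} = \overline{\ran\big(\Phi(B)\big)}$. Expanding the left-hand side via Theorem \ref{thm:phi_monoid_morph}-(ii)-(iv), $\Phi_{\tn\aff}(\overline{T}) = \Phi_{\tn\aff}(T)^{**}\Phi(E)^\dagger$, and Lemma \ref{lem:morph_prop} yields $\Phi(E) = \Phi\big(\raR(B)\big) = \raR\big(\Phi(B)\big) = \tilde{E}$, finishing the identification. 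The last clause of the theorem—that densely-defined closed operators go to densely-defined closed operators—is now immediate from (i) and (iii).
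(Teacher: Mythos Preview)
Your proof is correct and follows essentially the same route as the paper's: parts (i)--(iii) are identical, and in (iv) you use the same formula $\overline{T} = T^{**}E^\dagger$ with $E = \raR(B)$ and the identification $\Phi(E) = \raR\big(\Phi(B)\big)$ via Lemma \ref{lem:morph_prop}. The one difference is how you establish that $\Phi_{\tn\aff}(T)$ is pre-closed: the paper argues via Proposition \ref{prop:kr_I_278}-(i), noting that $\dom\big(\Phi_{\tn\aff}(T)^*\big) = \dom\big(\Phi_{\tn\aff}(T^*)\big)$ is dense by part (i), whereas you deduce pre-closedness from the existence of the closed extension $\Phi_{\tn\aff}(\overline{T}) \supseteq \Phi_{\tn\aff}(T)$; both are valid and equally short.
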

\begin{proof}

\begin{itemize}[leftmargin=0.8cm, itemsep=0.2cm]
    \item[(i)] Let $T = A/B$ for $A, B \in \kM$ with $\nul (B) \subseteq \nul (A)$, so that $\Phi_{\tn\aff}(T) = \Phi(A)/\Phi(B)$. From Theorem \ref{thm:lat_morph}-(i), we have
\begin{equation}
\label{eqn:dom_func}
\dom \big( \Phi_{\tn\aff}(T) \big)= \ran\; \Phi(B) = \Phi_{\tn\aff}^s\big( \ran(B) \big) = \Phi_{\tn\aff}^s \big( \dom(T) \big).
\end{equation}

Let $T$ be densely-defined, that is, $\dom(T)$ is dense in $\sH$. From Theorem \ref{thm:lat_morph}-(iv), note that $\Phi_{\tn\aff}^s \big( \dom(T) \big)$ is dense in $\sK$. Equation (\ref{eqn:dom_func}) tells us that $\Phi_{\tn\aff}(T)$ is densely-defined.

    \item[(ii)] If $T$ is closed and bounded, then $T = AE^{\dagger}$ for $A \in \kM$ and $E$ a projection in $\kM$. Thus $\Phi_{\tn\aff}(T) = \Phi(A) \Phi(E)^{\dagger}$ where $\Phi(A) \in \kN$ and $\Phi(E)$ is a projection in $\kN$. By Remark \ref{rem:closed_bdd}, the assertion follows.

    \item[(iii)] By Theorem \ref{thm:quotient}-(ii), there are operators $P \in \kM^{+}, A \in \kM, E \in \kM_{\text{proj}}$ with $P$ quasi-invertible such that $T = P^{-1} A E^{\dagger}$. Theorem \ref{thm:phi_monoid_morph}-(iii) tells us that $\Phi(P)$ is a positive quasi-invertible operator with $\Phi_{\tn\aff}(P^{-1}) = \Phi(P)^{-1}$, and clearly, $\Phi(E)$ is a projection in $\kN$. From Theorem \ref{thm:phi_monoid_morph}-(ii), we have $\Phi_{\tn\aff}(T) = \Phi(P)^{-1}\Phi(A) \Phi(E)^{\dagger}$, so that $\Phi_{\tn\aff}(T)$ is closed.

    \item[(iv)] Let $T$ be pre-closed. By Proposition \ref{prop:kr_I_278}, $\dom(T^*)$ is dense in $\sH$. By Theorem \ref{thm:phi_monoid_morph}-(iv) and part (i) above, $\Phi_{\tn\aff}(T^*) = \Phi_{\tn\aff}(T)^*$ is densely-defined on $\sK$. Again using Proposition \ref{prop:kr_I_278}-(i), we conclude that $\Phi_{\tn\aff}(T)$ is pre-closed. 

    Let $A/B$ be a quotient representation of $T$ for $A, B \in \kM$ with $\nul (B) \subseteq \nul (A)$. 
    Let $E$ denote the projection onto $\overline{\dom(T)}$. Since $\dom(T) = \ran(B)$, clearly $E = \raR(B)$. Since $\Phi(A)/\Phi(B)$ is a quotient representation of $\Phi_{\tn\aff}(T)$ and $\Phi(E) = \raR \big( \Phi(B) \big)$ (by Lemma \ref{lem:morph_prop}), we note that $\Phi(E)$ is the projection onto $\overline{\dom(\Phi_{\tn\aff}(T))}$. From Proposition \ref{prop:t_bar} and Theorem \ref{thm:phi_monoid_morph}, we have 
    \[
    \Phi_{\tn\aff}(\overline{T}) =  \Phi_{\tn\aff}(T^{**} E^{\dagger}) = \Phi_{\tn\aff}(T)^{**} \Phi(E)^{\dagger} = \overline{\Phi_{\tn\aff}(T)}.\qedhere
    \]
\end{itemize}
\end{proof}

Let $\kM$ be a finite von Neumann algebra. The set of densely-defined operators in $\afm^{\text{MvN}}$ is a $*$-algebra under strong-sum, strong-product, and adjoint. For the sake of brevity, we denote this $*$-algebra by $\afm^{dd-c}$. In fact, it is a monotone-complete partially-ordered $*$-algebra under the order structure given by the cone of positive operators in $\afm^{dd-c}$ (see \cite[Proposition 4.21]{Nayak_2021}). Let $\kN$ be another finite von Neumann algebra and $\Phi : \kM \to \kN$ be a unital normal $*$-homomorphism. In \cite[Theorems 4.9, 4.24]{Nayak_2021} and \cite[Proposition 5.14]{hiroshi_2012}, it is shown that $\Phi$ has a unique extension to a unital normal $*$-homomorphism between $\afm^{dd-c}$ and $\afn^{dd-c}$. From the corollary below, we observe that this extension is nothing but the restriction of our $\Phi_{\tn\aff}$ to $\afm^{dd-c}$. This shows the compatibility of $\Phi_{\tn\aff}$ with the canonical extension of $\Phi$ in the context of finite von Neumann algebras.

\begin{cor}
\label{cor:phi_pres_st_sum_pdt}
\textsl{
Let $(\mathscr{M}; \sH), (\mathscr{N}; \sK)$ be represented von Neumann algebras and $\Phi : \mathscr{M} \to \mathscr{N}$ be a unital normal $*$-homomorphism. Let $\Phi_{\tn\aff} : \afm \to \afn$ be the extension of $\Phi$ as given in Definition \ref{defn:Phi_aff}. Let $T_1, T_2$ be closed operators affiliated with $\kM$. 
\begin{itemize}
    \item[(i)] If $T_1 + T_2$ is pre-closed, then $\Phi_{\tn\aff}(T_1) + \Phi_{\tn\aff}(T_2)$ is pre-closed, and $$\Phi_{\tn\aff}(T_1 \;\hat{+}\; T_2) = \Phi_{\tn\aff}(T_1) \;\hat{+}\; \Phi_{\tn\aff}(T_2).$$
    \item[(ii)] If $T_1 T_2$ is pre-closed, then $\Phi_{\tn\aff}(T_1) \Phi_{\tn\aff}(T_2)$ is pre-closed, and $$\Phi_{\tn\aff}(T_1 \;\hat{\cdot}\; T_2) = \Phi_{\tn\aff}(T_1) \;\hat{\cdot}\; \Phi_{\tn\aff}(T_2).$$
\end{itemize}
In other words, $\Phi_{\tn\aff}$ preserves strong-sum and strong-product.
}
\end{cor}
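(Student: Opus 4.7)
The plan is to reduce both claims to the previously established results that $\Phi_{\tn\aff}$ preserves sum, product, and closures of pre-closed affiliated operators. The strong-sum and strong-product are defined as closures of the ordinary sum and product, and we already know that $\afm$ is closed under sum and product (Theorem \ref{thm:m_aff_monoid}), that $\Phi_{\tn\aff}$ respects these operations (Theorem \ref{thm:phi_monoid_morph}), and that $\Phi_{\tn\aff}$ commutes with taking closures of pre-closed affiliated operators (Theorem \ref{thm:phi_pres_closed}-(iv)). Assembling these three ingredients in the right order should give both (i) and (ii) in just a few lines each.

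For part (i), I would first observe that $T_1 + T_2 \in \afm$ by Theorem \ref{thm:m_aff_monoid}-(i), and apply Theorem \ref{thm:phi_monoid_morph}-(i) to identify $\Phi_{\tn\aff}(T_1 + T_2) = \Phi_{\tn\aff}(T_1) + \Phi_{\tn\aff}(T_2)$. Under the hypothesis that $T_1 + T_2$ is pre-closed, Theorem \ref{thm:phi_pres_closed}-(iv) tells us that $\Phi_{\tn\aff}(T_1 + T_2)$, which equals $\Phi_{\tn\aff}(T_1) + \Phi_{\tn\aff}(T_2)$, is likewise pre-closed in $\afn$, and moreover
\[
\Phi_{\tn\aff}\bigl(\overline{T_1 + T_2}\bigr) = \overline{\Phi_{\tn\aff}(T_1 + T_2)} = \overline{\Phi_{\tn\aff}(T_1) + \Phi_{\tn\aff}(T_2)}.
\]
Unwrapping the definition of strong-sum (Definition \ref{def:str_sum_pdt}) then yields $\Phi_{\tn\aff}(T_1 \,\hat{+}\, T_2) = \Phi_{\tn\aff}(T_1) \,\hat{+}\, \Phi_{\tn\aff}(T_2)$, which is precisely the desired identity.

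Part (ii) is entirely parallel: replace Theorem \ref{thm:m_aff_monoid}-(i) with Theorem \ref{thm:m_aff_monoid}-(ii), and Theorem \ref{thm:phi_monoid_morph}-(i) with Theorem \ref{thm:phi_monoid_morph}-(ii), to conclude $\Phi_{\tn\aff}(T_1 T_2) = \Phi_{\tn\aff}(T_1)\,\Phi_{\tn\aff}(T_2)$ as elements of $\afn$, and then invoke Theorem \ref{thm:phi_pres_closed}-(iv) on the pre-closed operator $T_1 T_2$ to obtain pre-closedness of $\Phi_{\tn\aff}(T_1)\,\Phi_{\tn\aff}(T_2)$ together with the closure identity $\Phi_{\tn\aff}(\overline{T_1 T_2}) = \overline{\Phi_{\tn\aff}(T_1)\,\Phi_{\tn\aff}(T_2)}$.

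Since the proof is essentially a diagram-chase through results already in hand, there is no real obstacle; the only subtlety worth double-checking is that the hypothesis ``$T_1 + T_2$ is pre-closed'' (which a priori is a statement in the setting of Definition \ref{def:str_sum_pdt}) is the same notion of pre-closedness as the one used in Theorem \ref{thm:phi_pres_closed}-(iv). Since $T_1 + T_2$ is explicitly an element of $\afm$ by Theorem \ref{thm:m_aff_monoid}-(i), this hypothesis is exactly the hypothesis required to apply Theorem \ref{thm:phi_pres_closed}-(iv) to it, so no gap arises.
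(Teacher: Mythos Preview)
Your proposal is correct and follows essentially the same approach as the paper: invoke Theorem \ref{thm:phi_monoid_morph} to identify $\Phi_{\tn\aff}(T_1 + T_2) = \Phi_{\tn\aff}(T_1) + \Phi_{\tn\aff}(T_2)$ and $\Phi_{\tn\aff}(T_1 T_2) = \Phi_{\tn\aff}(T_1)\Phi_{\tn\aff}(T_2)$, then apply Theorem \ref{thm:phi_pres_closed}-(iv) to handle pre-closedness and closures. The paper's proof is more terse, but the logical content is identical.
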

\begin{proof}
Since $T_1, T_2, \in \afm$, from Theorem \ref{thm:phi_monoid_morph} we have,
$$\Phi_{\tn\aff}(T_1 + T_2) = \Phi_{\tn\aff}(T_1) + \Phi_{\tn\aff}(T_2), \Phi_{\tn\aff}(T_1 T_2) = \Phi_{\tn\aff}(T_1) \Phi_{\tn\aff}(T_2).$$
Then the assertion is an immediate consequence of Theorem \ref{thm:phi_pres_closed}-(iv).
\end{proof}

A geometric way to build an extension of $\Phi$ to $\afm^{\text{MvN}}$ is via Stone's idea of characteristic projection matrices (see \cite{stone51}). Note that there is a one-to-one correspondence between a closed operator on $\sH$ and its characteristic projection matrix acting on $\sH \oplus \sH$. In Lemma \ref{lem:char_aff}, we have shown that if $T \in \afm^{\text{MvN}}$, then its characteristic projection, $\rchi(T)$, lies in $M_2(\kM)$. The mapping $\Phi$ induces a mapping via $\Phi_{(2)}$ between $M_2(\kM)$ to $M_2(\kN)$ and clearly, sends projection matrices to projection matrices. In the theorem below, we show that this strategy for obtaining an extension of $\Phi$ to $\afm^{\text{MvN}}$ also begets our now-familiar extension $\Phi_{\tn\aff}$.  

\begin{thm}
\label{thm:phi_aff_geom}
\textsl{
Let $(\mathscr{M}; \sH), (\mathscr{N}; \sK)$ be represented von Neumann algebras and $\Phi : \mathscr{M} \to \mathscr{N}$ be a unital normal $*$-homomorphism. Let $\Phi_{\tn\aff} : \afm \to \afn$ be the extension of $\Phi$ as given in Definition \ref{defn:Phi_aff}. Then the following diagram commutes.
}{
\[ 
\begin{tikzcd}
\afm^{\mathrm{MvN}} \arrow{r}{\rchi } \arrow[swap]{d}{\Phi_{\tn\aff}} & M_2(\kM) \arrow{d}{\Phi_{(2)}}\\
\afn^{\mathrm{MvN}} \arrow{r}{\rchi }& M_2(\kN)
\end{tikzcd}
\]
}
\textsl{
In other words, the extension $\Phi_{\tn\aff} : \afm^{\text{MvN}} \to \afn^{\text{MvN}}$ of $\Phi$, may be defined in a geometric manner via characteristic projections as the mapping, 
\[
T \mapsto (\rchi ^{-1} \circ \Phi_{(2)} \circ \rchi ) (T), \text{ for } T \in \afm^{\text{MvN}}.
\]
}
\end{thm}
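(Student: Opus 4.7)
The plan is to show that both $\rchi(\Phi_{\tn\aff}(T))$ and $\Phi_{(2)}(\rchi(T))$ are projections in $M_2(\kN)$ with the same range; since a projection in a von Neumann algebra is determined by its range, the identity then follows. For the first side, Theorem \ref{thm:phi_pres_closed}-(iii) guarantees that $\Phi_{\tn\aff}(T) \in \afn^{\mathrm{MvN}}$, so by definition $\rchi(\Phi_{\tn\aff}(T))$ is the projection in $M_2(\kN)$ onto the closed subspace $\graph{\Phi_{\tn\aff}(T)}$. For the second side, Lemma \ref{lem:char_aff} tells us that $\rchi(T) \in M_2(\kM)$, and since $\Phi_{(2)}$ is a unital normal $*$-homomorphism between $M_2(\kM)$ and $M_2(\kN)$, it sends the projection $\rchi(T)$ to a projection $\Phi_{(2)}(\rchi(T))$ in $M_2(\kN)$.

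The key computation is to identify $\ran \big( \Phi_{(2)}(\rchi(T)) \big)$. Since $T$ is closed we have $\ran(\rchi(T)) = \graph{T}$, and by the very definition of $\big( \Phi_{(2)} \big)^{s}_{\aff}$ on the lattice of affiliated subspaces (together with the fact that $\rchi(T)$ is itself an element of $M_2(\kM)$), we obtain
\[
\ran \big( \Phi_{(2)}(\rchi(T)) \big) = \big( \Phi_{(2)} \big)^{s}_{\aff} \big( \ran(\rchi(T)) \big) = \big( \Phi_{(2)} \big)^{s}_{\aff} \big( \graph{T} \big).
\]
Applying Theorem \ref{thm:graph_phi} to the right-hand side yields $\graph{\Phi_{\tn\aff}(T)}$. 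Thus $\rchi(\Phi_{\tn\aff}(T))$ and $\Phi_{(2)}(\rchi(T))$ are two projections in $M_2(\kN)$ whose ranges both coincide with $\graph{\Phi_{\tn\aff}(T)}$, so they are equal.

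I do not foresee any substantive obstacle, as the bulk of the content is already packaged in Theorem \ref{thm:graph_phi} and in the functoriality of $\Phi \mapsto \Phi^{s}_{\aff}$ established in Theorem \ref{thm:lat_morph}. The only small care needed is to note that for closed $T$ the graph is already closed, so one never has to pass to closures when identifying $\ran(\rchi(T))$ with $\graph{T}$; and dually, that the range of the projection $\Phi_{(2)}(\rchi(T))$ is itself automatically closed, which is why applying $\big( \Phi_{(2)} \big)^{s}_{\aff}$ directly (rather than up to closure) is legitimate.
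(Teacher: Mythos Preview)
Your argument is correct and is essentially the paper's own proof, just packaged one level higher: the paper computes directly with the operator $\fS_{A,B}$ and invokes Lemma \ref{lem:morph_prop} to pass $\raR$ through $\Phi_{(2)}$, whereas you invoke Theorem \ref{thm:graph_phi} (which already encodes that computation at the level of affiliated subspaces) and then finish by the observation that two projections with the same range coincide. No gap.
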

\begin{proof}
Recall the notation $\fS_{A', B'}$ from \S \ref{sec:prelim}. Let $T \in \afm^{\text{MvN}}$ and $A/B$ be a quotient representation of $T$ for $A, B \in \kM$ with $\nul (B) \subseteq \nul (A)$. By Proposition \ref{prop:phi_aff_is_well_def}, $\Phi(A) / \Phi(B)$ is a quotient representation of $\Phi_{\tn\aff}(T)$. Since $T$ is a closed operator, Theorem \ref{thm:phi_pres_closed}-(iii) tells us that $\Phi_{\tn\aff}(T)$ must be a closed operator, and therefore its graph, $\graph{\Phi_{\tn\aff}(T)}$, is a closed subspace of $\sK \oplus \sK$. Thus,
\begin{align*}
\rchi \big( \Phi_{\tn\aff}(T) \big) &= \rchi \big( \Phi(A) / \Phi(B) \big) \overset{(\ref{eqn:graph_formula})}{=} \raR(\fS_{\Phi(A), \Phi(B)}) \\
&= \raR \left( \Phi_{(2)}(\fS_{A, B}) \right) = \Phi_{(2)}\Big( \raR(\fS_{A, B}) \Big), \text{ by Lemma \ref{lem:morph_prop}}\\
&\overset{(\ref{eqn:graph_formula})}{=} \Phi_{(2)} \big( \rchi(A/B) \big) = \Phi_{(2)} \big( \rchi(T) \big). \qedhere
\end{align*}
\end{proof}

Traditionally, $\afm ^{\text{MvN}}$ is viewed as being equipped with the binary operations of strong-sum and strong-product (which are not necessarily defined for every pair of operators in $\afm^{\text{MvN}}$), and the unary operation of adjoint. In the theorem below, we show that there is a unique extension of $\Phi$ that respects these algebraic operations; yet again, this extension turns out to be the restriction of $\Phi_{\tn\aff}$ (see Definition \ref{defn:Phi_aff}) to $\afm^{\text{MvN}}$. At this point, we hope that we have been able to persuade the reader through various perspectives that $\Phi_{\tn\aff}$ indeed gives the canonical extension of $\Phi$ to $\afm^{\text{MvN}}$.

\begin{thm}
\label{thm:uniqueness_phi_aff_2}
\textsl{
Let $(\mathscr{M}; \sH), (\mathscr{N}; \sK)$ be represented von Neumann algebras and $\Phi : \mathscr{M} \to \mathscr{N}$ be a unital normal $*$-homomorphism. Let $\Phi_{\tn\aff} : \afm \to \afn$ be the extension of $\Phi$ as given in Definition \ref{defn:Phi_aff}. Then the restriction of $\Phi_{\tn\aff}$ to $\afm^{\text{MvN}}$ is the unique mapping from $\afm^{\text{MvN}}$ to $\afn^{\text{MvN}}$ which extends $\Phi$, and respects strong-sum, strong-product and adjoint.
}
\end{thm}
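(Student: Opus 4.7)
The plan is to show $\Psi = \Phi_{\tn\aff}|_{\afm^{\text{MvN}}}$ by exploiting the quotient representation from Theorem \ref{thm:quotient}-(ii): every $T \in \afm^{\text{MvN}}$ has the form $T = P^{-1} A E^\dagger$ for some $A, P, E \in \kM$ with $P$ positive and quasi-invertible and $E$ a projection. The operator $AE^\dagger$ is bounded and closed, and $P^{-1}(AE^\dagger) = T$ is closed, so at each stage the ordinary composition agrees with the strong product. Preservation of strong-product by $\Psi$ therefore forces
\[
\Psi(T) \;=\; \Psi(P^{-1}) \,\hat\cdot\, \Phi(A) \,\hat\cdot\, \Psi(E^\dagger),
\]
reducing the task to identifying $\Psi$ on the two special types of factors $P^{-1}$ and $E^\dagger$.

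For the positive quasi-invertible factor, the identities $P \hat\cdot P^{-1} = I_\sH = P^{-1} \hat\cdot P$ (the first because $PP^{-1} = I_\sH|_{\ran P}$ is pre-closed with closure $I_\sH$ by density of $\ran P$, the second because $P^{-1}P = I_\sH$ is already closed) transport under $\Psi$ to $\Phi(P) \hat\cdot \Psi(P^{-1}) = I_\sK = \Psi(P^{-1}) \hat\cdot \Phi(P)$. By Lemma \ref{lem:morph_prop}, $\Phi(P)$ is positive and quasi-invertible in $\kN$, and a short closure argument—using boundedness of $\Phi(P)$, density of $\ran \Phi(P)$, and closedness of $\Psi(P^{-1})$—then pins down $\Psi(P^{-1}) = \Phi(P)^{-1}$.

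For a projection $E \in \kM$, I would combine three ingredients. First, adjoint preservation applied to the easily verified identity $(E^\dagger)^* = E$ gives $\Psi(E^\dagger)^* = \Phi(E)$, so by Proposition \ref{prop:t_bar} we obtain $\Psi(E^\dagger) = \Phi(E) \,\hat\cdot\, E_0^\dagger$, where $E_0 \in \kN$ is the projection onto $\overline{\dom \Psi(E^\dagger)}$ (which lies in $\affs{\kN}$ by Theorem \ref{thm:dom_ran}); matching the adjoint forces $E_0 \geq \Phi(E)$. Second, the strong-product identity $E^\dagger \hat\cdot E = E$ transports to $\Psi(E^\dagger) \hat\cdot \Phi(E) = \Phi(E)$. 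Third, invoking the algebraic characterization in Remark \ref{rem:kauf_inv_2}—that $E^\dagger$ is the unique closed operator $F$ on $\sH$ satisfying $FE = E$ and $F(I-E) = 0_\sH|_{\{0\}_\sH}$, both of which are strong-product equalities because their right-hand sides are closed—together with the hypothesis $\Psi(0_\sH|_{\{0\}_\sH}) = 0_\sK|_{\{0\}_\sK}$, the transported relations single out $\Psi(E^\dagger)$ as the unique solution of the analogous characterization in $\afn^{\text{MvN}}$ for the projection $\Phi(E)$, namely $\Psi(E^\dagger) = \Phi(E)^\dagger$; equivalently, $E_0 = \Phi(E)$.

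Substituting both identifications back into the displayed decomposition gives $\Psi(T) = \Phi(P)^{-1} \,\hat\cdot\, \Phi(A) \,\hat\cdot\, \Phi(E)^\dagger = \Phi_{\tn\aff}(T)$, establishing uniqueness. The projection step is the main subtlety: the strong-product and adjoint relations satisfied by $E^\dagger$ alone leave an essentially one-parameter family of candidates of the form $\Phi(E) \,\hat\cdot\, E_0^\dagger$ with $E_0 \geq \Phi(E)$ in $\afn^{\text{MvN}}$, and it is precisely the trivial-to-trivial hypothesis, in tandem with Remark \ref{rem:kauf_inv_2}, that eliminates the spurious candidates and forces the canonical choice $\Phi(E)^\dagger$.
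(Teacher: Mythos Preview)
Your argument is correct and takes a genuinely different route from the paper's. The paper does not use the decomposition $T=P^{-1}AE^\dagger$ of Theorem \ref{thm:quotient}-(ii); instead it works with an arbitrary quotient $T=AB^\dagger$, first shows only the inclusion $\Phi(B)^\dagger\subseteq\Psi(B^\dagger)$ (via $B^\dagger B=I-\nulN(B)$ and Remark \ref{rem:kauf_inv_2}), hence $\Phi_{\tn\aff}(T)\subseteq\Psi(T)$, and then closes the gap by the adjoint sandwich $\Psi(S)^*\subseteq\Phi_{\tn\aff}(S)^*=\Phi_{\tn\aff}(S^*)\subseteq\Psi(S^*)=\Psi(S)^*$ for densely-defined $S$, finally reducing general $T$ to this case via $T=T^{**}E^\dagger$ and the projection identity $\Psi(E^\dagger)=\Phi(E)^\dagger$ (which is exactly your ``third ingredient''). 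Your approach trades that soft inclusion-plus-sandwich argument for the sharper structural decomposition, handling the new factor $P^{-1}$ directly from the two-sided relation $P\,\hat\cdot\,P^{-1}=I=P^{-1}\,\hat\cdot\,P$; the point that $\Psi(P^{-1})\Phi(P)$ is already closed by Lemma \ref{lem:unbdd} (so equals $I_\sK$, not merely has closure $I_\sK$) is what makes your ``short closure argument'' go through, and you should say so explicitly. One pleasant by-product worth noting: once you observe that your first adjoint-based ingredient for projections is redundant (the third ingredient alone, i.e.\ Remark \ref{rem:kauf_inv_2} together with the trivial-to-trivial hypothesis, already pins down $\Psi(E^\dagger)=\Phi(E)^\dagger$, exactly as in the paper's Observation 1), your argument actually never uses preservation of adjoint, so it yields the slightly stronger conclusion that uniqueness already holds among extensions preserving strong-product and the trivial operator.
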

\begin{proof}
Let $\Psi$ be a map from $\afm^{\text{MvN}}$ to $\afn^{\text{MvN}}$ that extends $\Phi$, respects strong-sum, strong-product and adjoint. Before we proceed with the proof, here are a couple of things to keep in mind. From our hypothesis that $\Psi$ maps into $\afn^{\text{MvN}}$, $\Psi(T)$ is a closed operator on $\sK$ for all $T \in \afm^{\text{MvN}}$. If $T_1, T_2$ are closed operators on a Hilbert space such that $T_1 T_2$ is closed, then by definition of the strong-product, $T_1 \hat{\cdot} T_2 = T_1 T_2$. 
\vskip 0.04in

\noindent\textbf{Observation 1.} For every $B \in \kM$, we have $\Psi(B)^\dagger \subseteq \Psi(B^\dagger)$.
\vskip 0.04in
{\noindent\it Proof of Observation 1.} From equation (\ref{eqn:char_of_Kaufman_inv}) note that $B^{\dagger} B = I - \nulN(B)$ is closed. Since $\Psi(B^\dagger) \in \afn^{\text{MvN}}$ is closed and $\Psi(B)=\Phi(B)\in \kN$, by Lemma \ref{lem:unbdd} we observe that $\Psi(B^\dagger) \Psi(B)$ is closed. Thus from the hypothesis on $\Psi$ and Lemma \ref{lem:morph_prop}, we get,
\[
\Psi(B^\dagger)\Psi(B) = \Psi(B^\dagger)\hat{\cdot}\Psi(B) = \Psi(B^{\dagger} \hat{\cdot} B) = \Psi \big( I- \nulN (B) \big) = I - \nulN \big( \Psi(B) \big).
\]
From Remark \ref{rem:kauf_inv_2} and the preceding equation, it follows that $\Psi(B)^\dagger \subseteq \Psi(B^\dagger)$.
\vspace{0.4cm}

\noindent\textbf{Observation 2.} For every $T \in \afm^{\text{MvN}}$, we have $\Phi_{\tn\aff}(T) \subseteq \Psi(T)$.
\vskip 0.04in

{\noindent\it Proof of Observation 2.} Let $A, B \in \kM$ such that $T = AB^\dagger$. From the hypothesis on $\Psi$, we have
\begin{align*}
\label{eqn:psi_extends_phi}
    \Psi(T) 
    = \Psi(A) \hat{\cdot} \Psi(B^{\dagger}) 
    &\supseteq \Psi(A) \hat{\cdot}  \Psi(B)^{\dagger}, \; \text{ by Observation 1 and Remark \ref{rem:prod_ext}} \\
    &= \Phi(A) \hat{\cdot}  \Phi(B)^{\dagger} = \Phi_{\tn\aff}(AB^{\dagger}), \; \text{ by Corollary \ref{cor:phi_pres_st_sum_pdt}-(ii)}\\
    &= \Phi_{\tn\aff}(T).
\end{align*}
\vskip 0.02in

\noindent {\bf Observation 3.} For every projection $E \in \kM$, we have $\Psi(E^{\dagger}) = \Psi(E)^{\dagger}$.
\vskip 0.04in

{\noindent\it Proof of Observation 3.} 
For a closed operator $T$ on $\sH$ and a bounded operator $B$  on $\sH$, note that $T+B$ is closed so that $T \hat{+} B = T+B$.  It is easy to verify that $E^\dagger + (I - E) = E^\dagger$. Applying $\Psi$ on both sides, we get 
\begin{align*}
\Psi(E^\dagger) &= \Psi \left( E^{\dagger} + (I-E) \right) =  \Psi \left( E^{\dagger} \hat{+} (I-E) \right) \\
&= \Psi(E^\dagger) \hat{+} \Psi(I-E) =\Psi(E^{\dagger}) \hat{+} \left(I - \Psi(E)\right)\\
&= \Psi(E^{\dagger}) + \left(I - \Psi(E)\right).
\end{align*}
Note that $\Psi(E) = \Phi(E)$ is a projection in $\kN$. Thus, we have $$\dom \left( \Psi(E^{\dagger}) \right) \subseteq \nul \left( I-\Psi(E) \right) =  \ran \left( \Psi(E) \right) = \dom \left( \Psi(E)^{\dagger} \right).$$
Together with Observation 1, this tells us that $\Psi(E^{\dagger}) = \Psi(E)^{\dagger}$.
\vspace{0.4cm}

\noindent\textbf{Observation 4.} For every $T \in \afm^{\text{MvN}}$, we have $\Psi(T) = \Phi_{\tn\aff}(T)$.   
\vskip 0.04in

{\noindent\it Proof of Observation 4.} Let $S \in \afm^{\text{MvN}}$ be densely-defined.  From Theorem \ref{thm:phi_pres_closed}-(i), we see that $\Phi_{\tn\aff}(S)$ is densely-defined on $\sK$. Since by Observation 2, $\Phi_{\tn\aff}(S) \subseteq \Psi(S)$, clearly $\Psi(S)$ is also densely-defined on $\sK$. Recall that $\Phi_{\tn\aff}$ preserves adjoint (by Theorem \ref{thm:phi_monoid_morph}-(iv)) and by the  hypothesis, $\Psi$ also preserves adjoint. Using \cite[Remark 2.7.5]{kr-I} and Observation 2, we get
\begin{align*}
    \Psi(S)^* \subseteq \Phi_{\tn\aff}(S)^* = \Phi_{\tn\aff}(S^*) \subseteq\Psi(S^*) = \Psi(S)^*.
\end{align*}
From the sandwich of extensions, we have
\begin{equation}
\label{eqn:dd_psi_phi}
\Phi_{\tn\aff}(S^*) = \Psi(S^*),
\end{equation} for all densely-defined $S$ in $\afm^{\text{MvN}}$. 

Let $T\in \afm^{\text{MvN}}$. By Proposition \ref{prop:t_bar}, $T = T^{**}E^\dagger$ for some projection $E\in \kM$. Since $T^{*}$ is densely-defined and $\Psi(E^\dagger)= \Psi(E)^\dagger$ (by Observation 3) the result follows from equation (\ref{eqn:dd_psi_phi}) for $S = T^*$. Indeed,
\[
    \Psi(T) = \Psi(T^{**})\Psi(E)^\dagger = \Phi_{\tn\aff}(T^{**}) \Phi(E)^\dagger = \Phi_{\tn\aff}(T^{**}E^\dagger) = \Phi_{\tn\aff}(T). \qedhere
\]
\end{proof}

There are several special classes of affiliated operators studied in the literature. In fact, non-commutative function theory primarily deals with non-commutative analogues of classical function spaces (such as $L^p$-spaces, Sobolev spaces, etc.)\ which are often related to the set of affiliated operators for a von Neumann algebra. In Remark \ref{rem:mes_op}, we consider the $*$-algebra of measurable operators and the $*$-algebra of locally measurable operators associated with a von Neumann algebra, demonstrating that, unlike the near-semiring of affiliated operators, they are {\bf not} functorial constructions.

\begin{definition}[Measurable Operator, {\cite{segal_integration, yeadon_1973}}]
    \label{def:meas_op}
	An operator $A \in \affc{\kM}$ is called \emph{measurable} iff $\dom(A)$ is dense and $I-E_\lambda$ is \emph{finite} (relative to $\kM$) for some $\lambda >0$ where $|A|=\int_{0}^{\infty}\lambda \mathrm{d}E_\lambda$ is the spectral resolution of $|A|$. The set of all measurable operators is denoted by $\sS(\kM)$.
\end{definition}

\begin{definition}[Locally Measurable Operator, {\cite{yeadon_1973}}]
    \label{def:loc_mes_op}
	An operator $A \in \affc{\kM}$ is said to be \emph{locally measurable} if there exist projections $Q_n\in\kC$, the center of $\kM$, and $E_n\in \kM$ such that $Q_n\uparrow I, E_n\uparrow I, \ran(E_n)\subseteq \dom(A)$ and $Q_n(I-E_n)$ is finite for all $n$. The set of all locally measurable operators associated with $\kM$ is denoted by $\sL(\kM)$. 
\end{definition}

\begin{remark}
\label{rem:mes_op}
Suppose $\Phi:\kM\to\kN$ is a unital normal $*$-homomorphism between two von-Neumann algebras. Now it is natural to ask whether or not $\Phi_{\tn\aff}$ preserves \emph{measurability} of affiliated operators. In general, it doesn't! Indeed, let $T$ be a densely-defined self-adjoint operator acting on $\sH$. Then $T$ generates an abelian von-Neumann algebras $\kA$ such that $T\in \kA_{\aff}$. Since $\kA$ is finite so $T\in \sS(\kA)$, the set of all measurable operators on $\kA$. Now if we consider $\Phi$ to be the inclusion $\kA\hookrightarrow \Bh$, then $\Phi_{\tn\aff}(T)$ is a densely-defined self-adjoint operator on $\sH$ which is not contained in $\sS \big( \Bh \big) = \Bh$; that is, $\Phi_{\tn\aff}$ doesn't preserve the measurability of operators. Similarly, since $\sL \big( \Bh \big) = \Bh$, $\Phi_{\tn\aff}$ doesn't preserve the local measurability either.
\end{remark}

\section{Some Operator Properties preserved by {$\Phi_{\textnormal{aff}}$}}%
\label{sec:misc_alg_props}%

In this section, we show that $\Phi_{\aff}$ preserves operator properties such as being \emph{symmetric}, \emph{positive}, \emph{self-adjoint}, \emph{normal}, \emph{accretive}, \emph{sectorial}. First we recall some definitions.

\begin{definition}
\label{defn:symm_pos}
An unbounded operator $T$ on the Hilbert space $\sH$ is said to be
\begin{itemize}
    \item[(i) ] \emph{symmetric} if $\ipdt{Tx}{x} \in \R$ for all $x\in \dom(T)$;
    \item[(ii)] \emph{positive} if $\ipdt{Tx}{x}\ge 0$ for all $x \in \dom(T)$;
    \item[(iii)] {\it accretive} if $\Re\ipdt{Tx}{x}\ge 0$ for all $x\in \dom(T)$;
    \item[(iv)] {\it sectorial} if $\ipdt{Tx}{x} \in S_{c,\theta}:=\{\lambda\in \C~:~|\arg(\lambda-c)|\le \theta\} \text{ for all } x\in \dom(T),$
 where $c\in \C$ and $\theta \in [0, \frac{\pi}{2})$.
 \end{itemize}
The \emph{numerical range} of $T$, denoted by $W(T)$, is defined by
	\[
	W(T):=\{\ipdt{Tx}{x}:x\in \mathrm{dom}(T), \|x\|=1\}.
	\]
\end{definition}

\begin{remark}
Let $T$ be an unbounded operator on the Hilbert space $\sH$. We denote the closed right half-plane in $\C$ by,
\[
	\mathbb{H}_{0, 0}  := \{ z \in \C~:~\Re(z)\ge 0 \}.
\] Table \ref{table:numerical_ran} recasts the operator properties described in Definition \ref{defn:symm_pos} in terms of numerical ranges.

\begin{table}[h!]
    \centering
    \begin{tabular}{|c | c|} 
        \hline
        $T$ is symmetric & $W(T) \subseteq \R$  \\[1ex]
        \hline
        
        $T$ is positive & $W(T) \subseteq \R_{\ge 0}$  \\[1ex]
        \hline

        $T$ is accretive & $W(T) \subseteq \mathbb{H}_{0, 0}$  \\[1ex]
        \hline

        $T$ is sectorial & $W(T) \subseteq S_{c, \theta}$  \\[1ex]
        \hline
    \end{tabular}
    \vspace{0.3cm}
    \caption{Operator properties in terms of their numerical ranges}
    \label{table:numerical_ran}
\end{table}

\end{remark}

For a complex number $\alpha\in \C$ and an angle $\theta\in [-\pi, \pi)$ we will denote the affine transformation $\C\to\C:z\mapsto e^{i\theta}(z+\alpha)$ by $\fL_{\theta, \alpha}$. Every \emph{closed half-space} in $\C$ is the image of $\mathbb{H}_{0, 0}$ under some $\fL_{\theta, \alpha}$ and is denoted by $\mathbb{H}_{\theta, \alpha}$, that is, 
\[
	\mathbb{H}_{\theta, \alpha} := \fL_{\theta, \alpha}(\mathbb{H}_{0, 0}).
\]
Note that the affine transformation $\fL_{\theta, \alpha}$ is invertible and its inverse $\fL_{\theta, \alpha}^{-1} = \fL_{-\theta, -e^{i \theta} \alpha}$ is an affine transformation. For the affine transformation, $\fL_{\theta, \alpha}$, we define $\fL_{\theta, \alpha}(T) := e^{i\theta}(T + \alpha I)$. Clearly, $\dom \big( \fL_{\theta, \alpha}(T) \big) = \dom(T).$

\begin{lem}
\textsl{
For an operator $T$ acting on the Hilbert space $\sH$ we have,
	\[
		W \big( \fL_{\theta, \alpha}(T) \big) = \fL_{\theta, \alpha} \big( W(T) \big),
	\]
	where $\alpha\in \C$ and $\theta\in [0, \frac{\pi}{2})$.
}
\end{lem}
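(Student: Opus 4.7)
The proof is essentially a direct unwinding of the definitions, so my plan is to verify the set equality by a pointwise calculation and then note that domains match. Concretely, I would first observe that by definition $\dom(\fL_{\theta,\alpha}(T)) = \dom(T)$, so the two numerical ranges are parametrized by the same set of unit vectors. Hence it suffices to check that for each unit vector $x \in \dom(T)$, we have $\langle \fL_{\theta,\alpha}(T)x, x\rangle = \fL_{\theta,\alpha}(\langle Tx, x\rangle)$.

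The verification is a one-line computation using sesquilinearity of the inner product in the first argument and $\|x\|^2 = 1$:
\[
\langle \fL_{\theta,\alpha}(T)x, x\rangle = \langle e^{i\theta}(T + \alpha I)x, x\rangle = e^{i\theta}\bigl(\langle Tx, x\rangle + \alpha \|x\|^{2}\bigr) = e^{i\theta}\bigl(\langle Tx, x\rangle + \alpha\bigr) = \fL_{\theta,\alpha}(\langle Tx, x\rangle).
\]
Taking the image over all unit $x \in \dom(T)$ on both sides immediately gives $W(\fL_{\theta,\alpha}(T)) = \fL_{\theta,\alpha}(W(T))$.

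There is essentially no obstacle here: the lemma is purely formal and does not invoke any property of the Kaufman inverse, affiliation, or functoriality. The only mild point worth flagging in the write-up is the equality of domains (so that the indexing set over which the numerical range is taken is unchanged), which follows directly from the definition of $\fL_{\theta,\alpha}(T) = e^{i\theta}(T + \alpha I)$ together with the convention that $\dom(T + \alpha I) = \dom(T)$.
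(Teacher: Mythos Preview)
Your proof is correct and matches the paper's approach exactly: both simply unwind the definitions via the computation $W(e^{i\theta}(T+\alpha I)) = e^{i\theta}(W(T)+\alpha)$. Your explicit mention of the equality of domains is a small bonus clarification that the paper leaves implicit.
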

\begin{proof}
Just unwrapping the appropriate definitions, we get 
\[
W \big( \fL_{\theta, \alpha}(T) \big) = W \left( e^{i\theta}(T+\alpha I) \right) = e^{i\theta}(W(T) + \alpha)=\fL_{\theta, \alpha} \big( W(T) \big).   \qedhere
\]
\end{proof}

\begin{prop}
\label{prop:phi_num_range}
\textsl{
Let $(\mathscr{M}; \sH), (\mathscr{N}; \sK)$ be represented von Neumann algebras and $\Phi : \mathscr{M} \to \mathscr{N}$ be a unital normal $*$-homomorphism. For $T \in \afm$, if $W (T) \subseteq \mathbb{H}_{0, 0}$, then $W \big( \Phi_{\aff}(T) \big) \subseteq \mathbb{H}_{0, 0}$. 
}
\end{prop}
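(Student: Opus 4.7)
The plan is to translate the numerical-range condition into a purely algebraic condition on the bounded operators in a quotient representation, use the fact that the homomorphism $\Phi$ preserves positivity, and then translate back.

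First I would fix a quotient representation $T = A/B$ with $A, B \in \kM$ and $\nul(B) \subseteq \nul(A)$, so that $\Phi_{\aff}(T) = \Phi(A)/\Phi(B)$ by Definition \ref{defn:Phi_aff}, and $\dom(T) = \ran(B)$. The key observation I would record is that for every $x \in \sH$, one has $T(Bx) = Ax$: indeed $T(Bx) = AB^{\dagger} Bx = A(I - \nulN(B))x$, and since $\nulN(B)x \in \nul(B) \subseteq \nul(A)$, the subtracted term vanishes, giving $T(Bx) = Ax$. Consequently $\ipdt{T(Bx)}{Bx} = \ipdt{Ax}{Bx} = \ipdt{(B^*A)x}{x}$.

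Next I would recast the hypothesis $W(T) \subseteq \mathbb{H}_{0,0}$ algebraically. Since every vector in $\dom(T) = \ran(B)$ is of the form $Bx$ for some $x \in \sH$, the above identity shows that $W(T) \subseteq \mathbb{H}_{0,0}$ is equivalent to $\Re\ipdt{(B^*A)x}{x} \ge 0$ for all $x \in \sH$, which in turn is equivalent to the operator inequality
\[
B^*A + A^*B \ge 0 \quad \text{in } \kM.
\]
At this point the rest is routine: applying the unital normal $*$-homomorphism $\Phi$, which preserves sums, adjoints, products, and the positive cone, yields
\[
\Phi(B)^*\Phi(A) + \Phi(A)^*\Phi(B) \ge 0 \quad \text{in } \kN.
\]

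Finally I would run the argument of the first paragraph in reverse for $\Phi_{\aff}(T) = \Phi(A)/\Phi(B)$ in $\afn$: since $\dom(\Phi_{\aff}(T)) = \ran(\Phi(B))$ and $\Phi_{\aff}(T)(\Phi(B)y) = \Phi(A)y$ (by the same identity applied in $\kN$), we obtain $\Re\ipdt{\Phi_{\aff}(T)(\Phi(B)y)}{\Phi(B)y} = \Re\ipdt{(\Phi(B)^*\Phi(A))y}{y} \ge 0$ for every $y \in \sK$. Thus $W(\Phi_{\aff}(T)) \subseteq \mathbb{H}_{0,0}$, completing the proof. There is no real obstacle here; the only subtlety worth double-checking is the elementary but essential identity $T(Bx) = Ax$ for all $x \in \sH$ (not merely modulo $\nul(B)$), which is what lets us drop the $\nulN(B)$ correction and obtain the clean condition $B^*A + A^*B \ge 0$ that transports through $\Phi$. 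This same template, replacing $\mathbb{H}_{0,0}$ by other affine images $\mathbb{H}_{\theta,\alpha}$ or $S_{c,\theta}$, will handle the symmetric, positive, and sectorial cases uniformly.
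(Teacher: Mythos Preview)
Your proof is correct and follows essentially the same route as the paper: translate $W(T)\subseteq\mathbb{H}_{0,0}$ into the bounded-operator inequality $B^*A+A^*B\ge 0$ via the identity $T(Bx)=Ax$, push this through $\Phi$ by positivity, and read off the conclusion from the quotient representation $\Phi(A)/\Phi(B)$. Your explicit justification of $T(Bx)=Ax$ using $\nul(B)\subseteq\nul(A)$ is a nice touch that the paper leaves implicit.
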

\begin{proof}
Let $A', B'$ be bounded operators on a Hilbert space with $\nul (B') \subseteq \nul (A')$, and let $T' = A'/B'$. Keeping in mind that $\dom (T') = \ran(B')$, we have,
\begin{align*}
W (T') \subseteq \mathbb{H}_{0, 0} 
& \iff \forall y\in \dom(T'), 2 \Re \ipdt{T'y}{y} =  \ipdt{T'y}{y} + \ipdt{y}{T'y}\ge 0\\
& \iff \forall x \in \sH, \ipdt{T' B'x}{B'x} + \ipdt{B'x}{T'B'x} \ge 0\\
& \iff \forall x \in \sH, \ipdt{A'x}{B'x} + \ipdt{B'x}{A'x} \ge 0\\
& \iff B'^*A' + A'^*B' \ge 0.
\end{align*}

Let $A/B$ be a quotient representation of $T$ so that $\Phi(A)/\Phi(B)$ is a quotient representation of $\Phi_{\aff}(T)$. From the above discussion, we see that $B^*A + A^*B \ge 0$ and by positivity of $\Phi$, clearly $\Phi(B)^*\Phi(A) + \Phi(A)^*\Phi(B) \ge 0$, whence $W (\Phi_{\aff}(T)) \subseteq \mathbb{H}_{0, 0}$.
\end{proof}

\begin{prop}
\label{prop:phi_num_range2}
\textsl{
Let $(\mathscr{M}; \sH), (\mathscr{N}; \sK)$ be represented von Neumann algebras and $\Phi : \mathscr{M} \to \mathscr{N}$ be a unital normal $*$-homomorphism.\ Let $T \in \afm$ and $K$ be a closed convex set in $\C$. If $W (T) \subseteq K$, then $W \big( \Phi_{\aff}(T) \big) \subseteq K$.
}
\end{prop}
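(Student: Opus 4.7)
The plan is to reduce the general closed convex set to the closed half-plane case already handled by Proposition~\ref{prop:phi_num_range}, exploiting the standard fact from convex geometry that every closed convex subset of $\C$ equals the intersection of the closed (real-)affine half-planes containing it. Concretely, if $W(T) \subseteq K$ and $K = \bigcap_{(\theta, \alpha) \in I} \mathbb{H}_{\theta, \alpha}$ for some index set $I \subseteq [-\pi, \pi) \times \C$, it suffices to show that $W \big( \Phi_{\aff}(T) \big) \subseteq \mathbb{H}_{\theta, \alpha}$ for every $(\theta, \alpha) \in I$.

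Fix such a pair $(\theta, \alpha)$. Since $\mathbb{H}_{\theta, \alpha} = \fL_{\theta, \alpha}(\mathbb{H}_{0, 0})$, the hypothesis combined with the preceding lemma yields
\[
W \big( \fL_{\theta, \alpha}^{-1}(T) \big) = \fL_{\theta, \alpha}^{-1} \big( W(T) \big) \subseteq \fL_{\theta, \alpha}^{-1} \big( \mathbb{H}_{\theta, \alpha} \big) = \mathbb{H}_{0, 0}.
\]
Next I would verify that $\fL_{\theta, \alpha}^{-1}$ preserves $\afm$ and commutes with $\Phi_{\aff}$: if $T = A/B$ with $\nul(B) \subseteq \nul(A)$, then $\nul(B) \subseteq \nul(e^{-i\theta} A - \alpha B)$ as well, and a brief check on $\ran(B) = \dom(T)$ shows
\[
\fL_{\theta, \alpha}^{-1}(T) = e^{-i\theta} T - \alpha I = \big( e^{-i\theta} A - \alpha B \big)/B \in \afm.
\]
Applying Definition~\ref{defn:Phi_aff} and using that $\Phi_{\aff}$ extends $\Phi$, respects sums and scalar multiples (Theorem~\ref{thm:phi_monoid_morph}), we obtain
\[
\Phi_{\aff} \big( \fL_{\theta, \alpha}^{-1}(T) \big) = \big( e^{-i\theta} \Phi(A) - \alpha \Phi(B) \big)/\Phi(B) = \fL_{\theta, \alpha}^{-1} \big( \Phi_{\aff}(T) \big).
\]

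Applying Proposition~\ref{prop:phi_num_range} to $\fL_{\theta, \alpha}^{-1}(T) \in \afm$ gives $W \big( \fL_{\theta, \alpha}^{-1}(\Phi_{\aff}(T)) \big) \subseteq \mathbb{H}_{0, 0}$, and appealing once more to the lemma we conclude $W \big( \Phi_{\aff}(T) \big) \subseteq \fL_{\theta, \alpha}(\mathbb{H}_{0, 0}) = \mathbb{H}_{\theta, \alpha}$, as required. No step of this argument is particularly delicate; the only ingredient not already present in the preceding text is the elementary convex-geometric reduction to half-planes, which is a standard consequence of the Hahn--Banach separation theorem in the real plane.
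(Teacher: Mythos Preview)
Your proof is correct and follows essentially the same route as the paper: reduce to the half-plane case via the convex-geometric fact that any closed convex set is an intersection of closed half-planes, then handle a single half-plane $\mathbb{H}_{\theta,\alpha}$ by pulling back through the affine map $\fL_{\theta,\alpha}^{-1}$ to land in $\mathbb{H}_{0,0}$ and invoking Proposition~\ref{prop:phi_num_range}. You are slightly more explicit than the paper in verifying that $\Phi_{\aff}$ commutes with $\fL_{\theta,\alpha}^{-1}$ via the quotient representation, which the paper leaves implicit as a consequence of Theorem~\ref{thm:phi_monoid_morph}.
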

\begin{proof}
If $K = \C$, there is nothing to prove. Thus we may assume that $K$ is a proper subset of $\C$. First, we prove the assertion for the closed half-plane $K=\mathbb{H}_{\theta, \alpha}$ where $\theta \in [-\pi, \pi)$ and $\alpha \in \C$. For $\theta ' = -\theta, \alpha' = -e^{i \theta} \alpha$, note that  $\fL_{\theta ',\alpha '} = \fL_{\theta, \alpha}^{-1}$. We have, \begin{align*}
W(T)\subseteq \mathbb{H}_{\theta, \alpha} & \implies W \left( \fL_{\theta ', \alpha '}(T) \right) = \fL_{\theta ', \alpha '} \big( W(T) \big)\subseteq \fL_{\theta ', \alpha '} (H_{\theta, \alpha}) = \mathbb{H}_{0, 0}\\
&\implies W \big( \Phi_{\aff}(\fL_{\theta ', \alpha '}(T)) \big) \subseteq \mathbb{H}_{0, 0}, \;\;\text{by Proposition \ref{prop:phi_num_range}}\\
& \implies W \Big( \fL_{\theta ', \alpha '} (\Phi_{\aff}(T) \big) \Big) = \fL_{\theta ', \alpha '} \Big( W \big( \Phi_{\aff}(T) \big) \Big) \subseteq \mathbb{H}_{0, 0}\\
& \implies W \big( \Phi_{\aff}(T) \big) \subseteq \fL_{\theta, \alpha} (\mathbb{H}_{0, 0}) = \mathbb{H}_{\theta, \alpha}.
\end{align*}

From here, the assertion follows from a basic theorem in convex geometry which asserts that every closed convex proper subset of $\C$ (in fact, for any Euclidean space) is an intersection of closed half-spaces (see \cite[Corollary 1.3.5]{schneider_convex}). 
\end{proof}

\begin{thm}
\label{thm:sym_acc_sect}
\textsl{
Let $(\mathscr{M}; \sH), (\mathscr{N}; \sK)$ be represented von Neumann algebras and $\Phi : \mathscr{M} \to \mathscr{N}$ be a unital normal $*$-homomorphism. Let $\Phi_{\aff} : \afm \to \afn$ be the extension of $\Phi$ as given in Definition \ref{defn:Phi_aff}, and let $T \in \afm$.
\begin{itemize}
    \item[(i)] If $T$ is symmetric, then so is $\Phi_{\aff}(T)$.
    \item[(ii)] If $T$ is positive, then so is $\Phi_{\aff}(T)$.
    \item[(iii)] If $T$ is accretive, then so is $\Phi_{\aff}(T)$.
    \item[(iv)] If $T$ is sectorial, then so is $\Phi_{\aff}(T)$.
    \item[(v)] If $T$ is self-adjoint, then so is $\Phi_{\aff}(T)$.
    \item[(vi)] If $T$ is normal, then so is $\Phi_{\aff}(T)$.
\end{itemize}
}
\end{thm}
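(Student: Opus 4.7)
The strategy is to split the six parts into two groups according to their natural characterization. Parts (i)--(iv) express conditions on the numerical range and will follow uniformly from Proposition \ref{prop:phi_num_range2}; parts (v) and (vi) are algebraic identities involving the adjoint and will follow from the structure-preserving properties of $\Phi_{\aff}$ established in Theorem \ref{thm:phi_monoid_morph}, together with the closedness and density preservation provided by Theorem \ref{thm:phi_pres_closed}.

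For (i)--(iv), by Table \ref{table:numerical_ran} the four conditions are exactly $W(T) \subseteq K$ for the closed convex sets $K = \R,\; \R_{\ge 0},\; \mathbb{H}_{0,0}$, and $S_{c,\theta}$, respectively (convexity of the sector uses the standing hypothesis $\theta \in [0, \tfrac{\pi}{2})$). Proposition \ref{prop:phi_num_range2} then immediately yields $W(\Phi_{\aff}(T)) \subseteq K$, which is the desired conclusion in each case.

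For (v), a self-adjoint $T \in \afm$ satisfies $T = T^*$; applying $\Phi_{\aff}$ and invoking the adjoint-preservation in Theorem \ref{thm:phi_monoid_morph}-(iv) gives
\[
\Phi_{\aff}(T) \;=\; \Phi_{\aff}(T^*) \;=\; \Phi_{\aff}(T)^*,
\]
while Theorem \ref{thm:phi_pres_closed}-(i),(iii) supplies closedness and density of $\dom\bigl(\Phi_{\aff}(T)\bigr)$, certifying self-adjointness. For (vi), normality of $T$ means that $T$ is closed and densely-defined with $T^*T = TT^*$; this is an equality within $\afm$, since for closed densely-defined $T$ both sides are already closed self-adjoint operators by von Neumann's theorem, and both lie in $\afm$ by Theorems \ref{thm:adj_of_aff_opes} and \ref{thm:m_aff_monoid}-(ii). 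Applying $\Phi_{\aff}$ and using parts (ii) and (iv) of Theorem \ref{thm:phi_monoid_morph}, we obtain
\[
\Phi_{\aff}(T)^* \Phi_{\aff}(T) \;=\; \Phi_{\aff}(T^*T) \;=\; \Phi_{\aff}(TT^*) \;=\; \Phi_{\aff}(T)\, \Phi_{\aff}(T)^*,
\]
which combined with the closedness and density of $\Phi_{\aff}(T)$ from Theorem \ref{thm:phi_pres_closed} certifies normality. The only mild subtlety throughout is reconciling the standard operator-theoretic definitions with the interpretation inside $\afm$; once that bookkeeping is acknowledged, the proof amounts to a direct application of the functoriality already in place.
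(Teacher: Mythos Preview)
Your proof is correct and follows essentially the same approach as the paper: parts (i)--(iv) via Proposition \ref{prop:phi_num_range2} applied to the closed convex sets in Table \ref{table:numerical_ran}, and parts (v)--(vi) via the algebraic preservation in Theorem \ref{thm:phi_monoid_morph}. Your additional invocation of Theorem \ref{thm:phi_pres_closed} to certify density and closedness of $\Phi_{\aff}(T)$ in parts (v) and (vi) is extra care that the paper's terse ``immediate from Theorem \ref{thm:phi_monoid_morph}'' leaves implicit, but it is consistent with the framework and arguably tightens the argument.
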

\begin{proof}
Since $\R, \R_{\ge 0}, \mathbb{H}_{0, 0}, S_{c, \theta}$ are closed convex sets in $\C$, the assertions in parts (i)-(iv) follow from Proposition \ref{prop:phi_num_range2} and the description of these operators in Table \ref{table:numerical_ran}. Parts (v) and (vi) are immediate from Theorem \ref{thm:phi_monoid_morph}.
\end{proof}

\section{The Krein Extension Theory for Positive Affiliated Operators}%
\label{sec:krein_friedrichs}%

Consider the following classical question: Given a \emph{positive (symmetric)} operator $S$ acting on the Hilbert space $\sH$, does there always exist a \emph{positive self-adjoint} extension of $S$? Since self-adjoint operators are closed, for an affirmative answer, $S$ must be pre-closed, and without loss of generality, we may assume that $S$ is closed. When $S$ is densely-defined, the answer to the above question is affirmative; two such classical examples are given by the Friedrichs extension (see \cite{friedrichs_1934}) which we denote by $\fried{S}$, and the Krein-von Neumann extension (see \cite{krein_1947}) which we denote by $\krein{S}$. In fact, among all possible such extensions, $\fried{S}$ is the \emph{maximal} one, and $\krein{S}$ is the {\it minimal} one. 

Throughout this section, $\kM$ denotes a von Neumann algebra acting on the Hilbert space $\sH$. Let $S$ be a densely-defined closed positive operator affiliated with $\kM$. The Friedrichs and Krein-von Neumann extensions, $\fried{S}, \krein{S}$, are both affiliated with $\kM$ (proofs can be found in \cite[Appendix]{kadison_dual_cone}, \cite[\S 4]{skau_paper}, respectively). Let $\mathscr{N}$ be a von Neumann algebra acting on the Hilbert space $\sK$ and $\Phi : \mathscr{M} \to \mathscr{N}$ be a unital normal $*$-homomorphism. By Theorem \ref{thm:phi_pres_closed}-(i) and Theorem \ref{thm:sym_acc_sect}, we note that $\Phi_{\tn\aff}(S)$ is a densely-defined closed positive operator affiliated with $\kN$. From Theorem \ref{thm:sym_acc_sect} and Theorem \ref{thm:phi_pres_ext}, it is clear that $\Phi_{\tn\aff}(\fried{S}), \Phi_{\tn\aff}(\krein{S})$ are positive self-adjoint extensions of $\Phi_{\tn\aff}(S)$ which are affiliated with $\kN$. In this section, our primary objective is to show that these are in fact respectively the Friedrichs and Krein extensions of $\Phi_{\tn\aff}(S)$, that is,
\[
	\Phi_{\tn\aff}(\fried{S})=\fried{\Phi_{\tn\aff}(S)}\text{ and }~\Phi_{\tn\aff}(\krein{S})=\krein{\Phi_{\tn\aff}(S)}.
\]
To this end, we need to discuss Krein's extension theory (see \cite{krein_1947}) for a positive operator on a Hilbert space. A rough sketch may be found in \cite{skau_paper} and a more comprehensive account can be found in \cite[Chapter 13]{konrad_unbdd}. Since Krein's extension theory has an algebraic flavour, we expect it to be compatible with morphisms in $\kW^*$-Alg. Below we give a reasonably self-contained account rephrasing the results from \cite[\S 4]{skau_paper} in a language which makes functoriality evident.

\begin{definition}
Let $\sC (\sH)^{+}$ denote the family of closed positive operators on $\sH$ and $\sF(\sH)$ denote the family of closed and bounded (not necessarily everywhere-defined) symmetric operators $B$ with $||B||\le 1$ and $\nulN(I-B)= 0 $.

For an operator $S \in \sC (\sH)^{+}$, note that $\nulN (S+I) = 0$ so that $S+I$ is one-to-one. The \emph{Krein transform} of $S$ is defined as,
\[
	\fK(S):=(S-I)(S+I)^{-1}; ~\dom\left(\fK(S)\right) = \ran(S+I).
\]
For $B\in \sF(\sH)$, the \emph{inverse Krein transform} of $B$ is defined as,
\[
	\fK^{-1}(B):=(I+B)(I-B)^{-1}; ~\dom \left( \fK^{-1}(B) \right) = \ran(I-B).
\]
Note that, 
\begin{equation}
    \label{eqn:krein_transform}
    \begin{aligned}
        \fK(S) &= (S-I)(S+I)^{-1} = (S+I - 2I)(S+I)^{-1} = I - 2(S+I)^{-1},\\
    \fK^{-1}(B) &= (I+B)(I-B)^{-1} = \big( 2I - (I-B) \big) (I-B)^{-1} = 2(I-B)^{-1} - I.
    \end{aligned}
\end{equation}

\end{definition}

\begin{prop}
\label{prop:kr_SplusI}
\textsl{
Let $S$ be a closed positive operator on $\sH$. Then we have the following.
\begin{itemize}
    \item[(i)] $\nul (S+I) = \{ 0 \}_{\sH}$, and $\ran(S+I)$ is a closed subspace of $\sH$. If $\nul (S) = \{ 0 \}_{\sH}$, then $S^{-1}$ is a closed positive operator on $\sH$.
    \item[(ii)] $(S+I)^{-1}$ is a closed bounded positive operator with norm less than or equal to $1$, and has trivial nullspace. $\big($Note that $\dom \left( (S+I)^{-1} \right) = \ran (S+I)$.$\big)$
    \item[(iii)] $S$ is self-adjoint if and only if $\ran(S+I) = \sH$;
    \item[(iv)] $S_0$ is a positive extension of $S$ if and only if $(S_0+I)^{-1}$ is a positive extension of $(S+I)^{-1}$.
\end{itemize}
}
\end{prop}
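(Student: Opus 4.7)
The plan is to prove the four parts sequentially, with the positivity inequality $\|(S+I)x\|^2 \geq \|x\|^2$ (for $x \in \dom(S)$) as the main engine for parts (i) and (ii), and then leveraging (i) and (ii) for the rest.

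For part (i), the key inequality is immediate from expanding $\|(S+I)x\|^2 = \|Sx\|^2 + 2\,\Re\langle Sx, x\rangle + \|x\|^2$ and invoking positivity of $S$. This directly yields $\nul(S+I) = \{0\}$ and shows that $(S+I)^{-1} : \ran(S+I) \to \dom(S)$ is a contraction. To prove $\ran(S+I)$ is closed, take a sequence $(S+I)x_n \to y$; the contraction estimate forces $\{x_n\}$ to be Cauchy, hence $x_n \to x$, and then $Sx_n = (S+I)x_n - x_n \to y - x$, so closedness of $S$ furnishes $x \in \dom(S)$ with $(S+I)x = y$. For the last claim, when $\nul(S)=\{0\}$, the graph of $S^{-1}$ is obtained by flipping coordinates of $\graph{S}$, hence is closed; positivity follows from $\langle S^{-1}(Sx), Sx\rangle = \langle x, Sx\rangle \geq 0$.

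For part (ii), I combine (i) with Remark \ref{rem:cl_bdd}: a contraction defined on the closed subspace $\ran(S+I)$ is a closed bounded operator. Positivity and trivial nullspace are direct computations by substituting $y = (S+I)x$, namely $\langle (S+I)^{-1} y, y\rangle = \langle x, Sx\rangle + \|x\|^2 \geq 0$, and if $(S+I)^{-1}y = 0$ with $y = (S+I)x$ then injectivity of $S+I$ gives $x = 0$ and hence $y = 0$.

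For part (iii), the forward direction invokes the rank-nullity theorem (Proposition \ref{prop:rank_nullity}): if $S = S^*$, then $\ran(S+I)^\perp = \nul((S+I)^*) = \nul(S+I) = \{0\}$ by (i), so $\ran(S+I)$ is dense, and closedness from (i) promotes this to equality with $\sH$. The converse is the more substantive direction: if $\ran(S+I) = \sH$, then (ii) tells us $(S+I)^{-1}$ is a bounded positive everywhere-defined operator, hence bounded self-adjoint. Since it has trivial nullspace, its range $\dom(S)$ is dense in $\sH$, and the classical fact that the inverse of a bounded self-adjoint injection with dense range is self-adjoint yields $S+I$ self-adjoint; subtracting the bounded self-adjoint $I$ preserves self-adjointness.

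For part (iv), the forward direction is an unwrapping: if $S \subseteq S_0$ with both closed positive, then for $y = (S+I)x$ with $x \in \dom(S) \subseteq \dom(S_0)$, we have $(S_0+I)x = (S+I)x = y$ and injectivity of $S_0+I$ (by (i) applied to $S_0$) gives $(S_0+I)^{-1}y = x = (S+I)^{-1}y$. The converse uses the general fact that if $T \subseteq T_0$ and $T_0$ is injective then $T^{-1} \subseteq T_0^{-1}$; applied to the bounded inverses, this yields $S+I \subseteq S_0+I$ and hence $S \subseteq S_0$. The main conceptual hurdle is the converse of (iii), which depends on the classical characterization of self-adjointness via the Cayley/resolvent picture; everything else is a careful unwrapping of definitions against the contractive estimate.
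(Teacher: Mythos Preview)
Your proof is correct and complete. The paper itself does not give a self-contained argument: it simply cites \cite[Lemma 2.7.9]{kr-I} for parts (i) and (ii), \cite[Proposition 2.7.10]{kr-I} for part (iii), and for part (iv) notes in one line that the extension relation on the operators is equivalent to the containment $\ran(S+I) \subseteq \ran(S_0+I)$ of domains of the inverses. Your direct argument via the estimate $\|(S+I)x\|^2 \ge \|x\|^2$ is exactly the content of those Kadison--Ringrose references, so the underlying mathematics is the same; you have simply unpacked what the paper outsources.
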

\begin{proof}
Parts (i) and (ii) follows from \cite[Lemma 2.7.9]{kr-I} and (iii) follows from \cite[Proposition 2.7.10]{kr-I}. For (iv), note that $\ran(S+I) \subseteq \ran(S_0+I)$ or equivalently, $\dom \big( (S+I)^{-1} \big) \subseteq \dom \big( (S_0 +I)^{-1} \big)$, and then use part (ii).
\end{proof}

Using Proposition \ref{prop:kr_SplusI}, we have the following result.

\begin{prop}[cf.\ {\cite[Proposition 13.22]{konrad_unbdd}}]
\label{prop:krein_tranform}
\textsl{
The mapping $\fK$ gives a bijection between $\sC(\sH)^{+}$, and $\sF(\sH)$ and its inverse mapping is the inverse Krein transform, $\fK^{-1}$. Furthermore, $S_0$ is a positive self-adjoint extension of $S$ if and only if $\fK(S_0)$ is a positive self-adjoint extension of $\fK(S)$.
}
\end{prop}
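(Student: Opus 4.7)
The plan is to prove this in three stages: first, verify that $\fK$ and $\fK^{-1}$ really do map into $\sF(\sH)$ and $\sC(\sH)^+$ respectively; second, verify they are mutually inverse by direct algebraic manipulation using the identities in (\ref{eqn:krein_transform}); and third, translate the extension/self-adjointness correspondence through Proposition \ref{prop:kr_SplusI}.

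For the first stage, given $S \in \sC(\sH)^+$, Proposition \ref{prop:kr_SplusI}(ii) tells us that $(S+I)^{-1}$ is a closed bounded positive operator of norm at most $1$ with trivial nullspace, defined on the closed subspace $\ran(S+I)$. Rewriting via (\ref{eqn:krein_transform}) as $\fK(S) = I - 2(S+I)^{-1}$ makes it transparent that $\fK(S)$ is closed, bounded, and symmetric; the bound $\|\fK(S)\| \le 1$ comes from applying the continuous functional calculus to the positive contraction $(S+I)^{-1}$, and $\nulN(I-\fK(S)) = \nulN \big( 2(S+I)^{-1} \big) = 0$. Going the other way, for $B \in \sF(\sH)$ the hypothesis $\nulN(I-B) = 0$ means $(I-B)^{-1}$ is a well-defined closed operator on $\ran(I-B)$, and the corresponding identity $\fK^{-1}(B) = 2(I-B)^{-1} - I$ shows that $\fK^{-1}(B)$ is closed. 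For positivity, any vector in $\dom \big( \fK^{-1}(B) \big) = \ran(I-B)$ can be written as $x = (I-B)y$ with $y \in \dom(B)$, and a direct computation using the symmetry of $B$ gives
\[
\ipdt{\fK^{-1}(B)x}{x} = \ipdt{(I+B)y}{(I-B)y} = \|y\|^2 - \|By\|^2 \ge 0,
\]
since $\|B\| \le 1$.

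For the second stage, the identities in (\ref{eqn:krein_transform}) make the mutual inversion almost a one-line algebraic check: starting from $S \in \sC(\sH)^+$ we get $I - \fK(S) = 2(S+I)^{-1}$, so $(I - \fK(S))^{-1} = \tfrac{1}{2}(S+I)$ and hence $\fK^{-1}(\fK(S)) = (S+I) - I = S$; the reverse composition is symmetric. The only care required is domain bookkeeping, e.g.\ $\dom \big( \fK^{-1}(\fK(S)) \big) = \ran(I - \fK(S)) = \ran \big( 2(S+I)^{-1} \big) = \dom(S+I) = \dom(S)$, and similarly $\dom \big( \fK(\fK^{-1}(B)) \big) = \ran \big( \fK^{-1}(B) + I \big) = \dom(B)$.

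For the extension correspondence, Proposition \ref{prop:kr_SplusI}(iv) states that $S \subseteq S_0$ with both closed and positive if and only if $(S+I)^{-1} \subseteq (S_0+I)^{-1}$. Since $\fK$ is obtained from $(T+I)^{-1}$ by the affine transformation $X \mapsto I - 2X$ (and similarly $\fK^{-1}$ by an affine transformation), the extension relation is preserved: $\fK(S) \subseteq \fK(S_0)$ if and only if $S \subseteq S_0$, and analogously for $\fK^{-1}$. Finally, self-adjointness transfers via Proposition \ref{prop:kr_SplusI}(iii): a positive $S_0 \in \sC(\sH)^+$ is self-adjoint iff $\ran(S_0+I) = \sH$ iff $\dom(\fK(S_0)) = \sH$ iff $\fK(S_0)$ is everywhere-defined (and thus bounded and self-adjoint, being symmetric and everywhere-defined). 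Combining all of this, positive self-adjoint extensions of $S$ correspond bijectively to those elements of $\sF(\sH)$ that are everywhere-defined extensions of $\fK(S)$, which is the required correspondence.

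I expect the main obstacle to be purely bookkeeping: tracking domains through the compositions $\fK \circ \fK^{-1}$ and $\fK^{-1} \circ \fK$ and verifying that the algebraic manipulations on $2(I-B)^{-1} - I$ are valid when $B$ is only defined on a closed proper subspace of $\sH$. The conceptual content is already packaged into Proposition \ref{prop:kr_SplusI}, so once the affine equivalence between $(T+I)^{-1}$ and $\fK(T)$ is recognized, the extension and self-adjointness statements follow formally.
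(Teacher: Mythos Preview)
Your proposal is correct and follows exactly the route the paper indicates: the paper does not give a proof here but simply writes ``Using Proposition \ref{prop:kr_SplusI}, we have the following result,'' and your three stages (checking the targets of $\fK$ and $\fK^{-1}$, verifying mutual inversion via the affine identities (\ref{eqn:krein_transform}), and reading off the extension/self-adjointness correspondence from Proposition \ref{prop:kr_SplusI}(iii),(iv)) are precisely how one fills in those details. Your final remark that the correspondence is with \emph{everywhere-defined} self-adjoint extensions of $\fK(S)$ in $\sF(\sH)$ is in fact the correct reading---the word ``positive'' attached to $\fK(S_0)$ in the statement is inessential since $\fK(S_0)$ need not be positive (e.g.\ $\fK(0) = -I$).
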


\begin{lem}
\label{lem:fundamental}
\textsl{
Let $\kM$ be a von-Neumann algebra acting on the Hilbert space $\sH$ and $A\in \kM^+, E\in\kM_{\text{proj}}$ be given. Then the set 
	\[
		\{D \in \sB(\sH)_{sa}: \raR(D) \le E, D\le A\}
	\]
	contains a largest operator $\sD(A; E)$, which lies in $\kM$ and is given by the following expression,
\begin{equation}
\label{eqn:d_a_e}
	\sD(A; E) = A^{\frac{1}{2}} \nulN \left((I-E) A^{\frac{1}{2}}\right)
    A^{\frac{1}{2}}.   
\end{equation}
}
\end{lem}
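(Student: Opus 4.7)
The plan is first to verify that the displayed formula for $\sD(A;E)$ defines an element of the set which lies in $\kM$, and then to establish its maximality via a variational identity of Schur-complement type. I set $F := \nulN\bigl((I-E)A^{1/2}\bigr)$; since $(I-E)A^{1/2}\in\kM$, the projection $F$ onto its nullspace lies in $\kM$, and hence $\sD(A;E)=A^{1/2}FA^{1/2}\in\kM$. From $(I-E)A^{1/2}F=0$ I deduce $A^{1/2}F=EA^{1/2}F$, so $\ran(A^{1/2}F)\subseteq\ran(E)$; taking adjoints yields $FA^{1/2}=FA^{1/2}E$. Consequently $\sD(A;E)=(FA^{1/2})^{*}(FA^{1/2})\ge 0$ with $\raR(\sD(A;E))\le E$, while $A-\sD(A;E)=A^{1/2}(I-F)A^{1/2}\ge 0$ gives $\sD(A;E)\le A$; thus $\sD(A;E)$ indeed belongs to the set.

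For maximality, the key observation is that $I-F$ is the projection onto $\overline{\ran(A^{1/2}(I-E))}$: the orthogonal complement of $\nul((I-E)A^{1/2})$ equals the closure of the range of its adjoint, which is $A^{1/2}(I-E)$. Therefore, for each $x\in\sH$, the vector $(I-F)A^{1/2}x$ is the nearest point of $\overline{\ran(A^{1/2}(I-E))}$ to $A^{1/2}x$, and since the infimum of $\|v-w\|^{2}$ for $w$ ranging over a linear subspace equals the infimum over its closure, I arrive at the variational identity
\begin{align*}
\langle \sD(A;E)x,\,x\rangle &= \|FA^{1/2}x\|^{2} = \inf_{y\in\sH}\|A^{1/2}x - A^{1/2}(I-E)y\|^{2}\\
&= \inf_{y\in\sH}\bigl\langle A(x-(I-E)y),\,x-(I-E)y\bigr\rangle.
\end{align*}

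Finally, given $D\in\sB(\sH)_{sa}$ with $\raR(D)\le E$ and $D\le A$, the first condition gives $D(I-E)=0=(I-E)D$. For any $x,y\in\sH$, setting $u:=x-(I-E)y$, a direct expansion shows $\langle Du,u\rangle=\langle Dx,x\rangle$ because $D$ annihilates $(I-E)$-vectors on both sides; using $D\le A$,
\[
\langle Dx,x\rangle = \langle Du,u\rangle \le \langle Au,u\rangle = \bigl\langle A(x-(I-E)y),\,x-(I-E)y\bigr\rangle.
\]
Taking the infimum over $y\in\sH$ and invoking the variational identity yields $\langle Dx,x\rangle\le\langle \sD(A;E)x,x\rangle$ for all $x\in\sH$, whence $D\le\sD(A;E)$. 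The main obstacle is the recognition of the variational formula; once $I-F$ is identified with the range projection of $A^{1/2}(I-E)$, both the membership of $\sD(A;E)$ in the set and its maximality against arbitrary $D$ reduce to elementary Hilbert-space computations.
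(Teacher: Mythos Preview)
Your proof is correct and follows essentially the same approach as the paper, which makes the equivalent identification of $\nulN\bigl((I-E)A^{1/2}\bigr)$ as the projection onto $\{x\in\sH:A^{1/2}x\in\ran(E)\}$ and then defers to the ``Fundamental Lemma'' in \cite[pg.~179]{skau_paper}. Your argument is more self-contained: you supply the maximality step explicitly via the variational (shorted-operator) identity $\langle \sD(A;E)x,x\rangle=\inf_{y}\langle A(x-(I-E)y),x-(I-E)y\rangle$, which is exactly the standard route for this result.
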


\begin{proof}
Note that,
\begin{align*}
x \in \nul ((I-E)A^{\frac{1}{2}}) &\iff (I-E) A^{\frac{1}{2}} x = 0 \iff  E(A^{\frac{1}{2}} x) = A^{\frac{1}{2}} x\\
&\iff A^{\frac{1}{2}} x \in \ran(E).
\end{align*}
Thus $\nulN \left((I-E) A^{\frac{1}{2}}\right)$ is the projection onto the set  $\{ x \in \sH : A^{\frac{1}{2}}x \in \ran(E) \}$, and clearly it lies in $\kM$ as $(I-E)A^{\frac{1}{2}} \in \kM$. With this observation in mind, we just have to follow the line of argument in the (elementary) proof of the ``Fundamental Lemma'' in \cite[pg. 179]{skau_paper} to reach our desired conclusion.
\end{proof}

Let $S$ be a closed positive (symmetric) operator affiliated with $\kM$. When $S$ is densely-defined, a positive self-adjoint extension of $S$ always exists (\cite[Theorem 10.17]{konrad_unbdd}). But in general, this may not be true (see \cite[Example 13.2, Exercise 13.6.9(b)]{konrad_unbdd}).\ Nevertheless, from Proposition \ref{prop:krein_tranform} and Lemma \ref{lem:phi_krein}, it follows that there is a one-to-one correspondence between the set of positive self-adjoint extensions of $S$ which are affiliated with $\kM$, and the set of self-adjoint extensions of the Krein transform of $S$, $\fK(S)$, which lie in $\kM \cap \sF(\sH)$. Below we see how to find all such self-adjoint extensions of $\fK(S)$ using Lemma \ref{lem:fundamental}, starting with a particular example of such an extension.

\begin{prop}
\label{prop:kt_min_max}
\textsl{
Let $\kM$ be a von Neumann algebra acting on the Hilbert space $\sH$ and $S$ be a positive (symmetric) operator in $\afm$. Let $K$ be a self-adjoint extension of the Krein transform of $S$, $\fK(S)$, lying in $\kM \cap \sF(\sH)$, and $F$ be the projection onto $\dom(\fK(S))^\perp$. Let $\sD(\cdot\; ; \cdot)$ be as in Lemma \ref{lem:fundamental}.
\begin{itemize}
    \item[(i)] Then $K - \sD(I+K; F)$ is a self-adjoint extension of $\fK(S)$ lying in $\kM \cap \sF(\sH)$. For every self-adjoint extension $K'$ of $\fK(S)$, we have 
    \[
        K - \sD(I+K;F) \le K'.
    \] 
    In particular, for any two self-adjoint extensions $K, K'$ of $\fK(S)$ lying in $\kM \cap \sF(\sH)$, we have 
\begin{equation}
\label{eqn:K_m}
        K - \sD(I+K;F) = K' - \sD(I+K';F) (=: \fK(S)_{\min}).
\end{equation}
    \item[(ii)] In addition, if $S$ is densely-defined, then $K + \sD(I-K; F)$ is a self-adjoint extension of $\fK(S)$ lying in $\kM \cap \sF(\sH)$. For any other self-adjoint extension $K'$ of $\fK(S)$, we have 
    \[
        K' \le K + \sD(I-K;F).
    \]  
    Thus, for any two self-adjoint extensions $K, K'$ of $\fK(S)$ lying in $\kM \cap \sF(\sH)$, we have 
\begin{equation}
\label{eqn:K_M}
            K + \sD(I-K;F) = K' + \sD(I-K';F) (=: \fK(S)_{\max}).
\end{equation}
\end{itemize}
}
\end{prop}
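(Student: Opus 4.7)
The plan is to deploy the maximality property of Lemma \ref{lem:fundamental} at the core of both parts. The underlying observation common to both is: whenever $K_1, K_2$ are self-adjoint extensions of $\fK(S)$ lying in $\kM \cap \sF(\sH)$, the bounded self-adjoint operator $K_1 - K_2 \in \kM$ vanishes on the closed subspace $\dom(\fK(S))$ (closed by Proposition \ref{prop:kr_SplusI}(i)); by self-adjointness, $\overline{\ran(K_1 - K_2)} \subseteq \dom(\fK(S))^{\perp} = \ran(F)$, equivalently $\raR(K_1 - K_2) \le F$. I will also repeatedly use the positive-sum trick: if $A, B \ge 0$ and $(A+B)x = 0$, then $\ipdt{Ax}{x} + \ipdt{Bx}{x} = 0$ forces $Ax = Bx = 0$.

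\textbf{Part (i).} First I would check that $K - \sD(I+K;F) \in \kM$ is a self-adjoint extension of $\fK(S)$ that lies in $\sF(\sH)$. Since $\sD(I+K;F)$ is positive with $\raR(\sD(I+K;F)) \le F$, its nullspace contains $\ran(F)^\perp = \dom(\fK(S))$, so $K - \sD(I+K;F)$ agrees with $K = \fK(S)$ on $\dom(\fK(S))$. The routine chain $0 \le \sD(I+K;F) \le I+K$ yields $-I \le K - \sD(I+K;F) \le I$, while $\nulN\bigl(I - K + \sD(I+K;F)\bigr) = 0$ follows by applying the positive-sum trick to $(I-K) + \sD(I+K;F)$ and invoking $\nulN(I-K) = 0$. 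For minimality, given any self-adjoint extension $K'$ of $\fK(S)$ in $\kM \cap \sF(\sH)$, the bound $K' \ge -I$ gives $K - K' \le K + I$; combined with $\raR(K - K') \le F$, Lemma \ref{lem:fundamental} yields $K - K' \le \sD(I+K;F)$, i.e.\ $K - \sD(I+K;F) \le K'$. Equation (\ref{eqn:K_m}) is immediate by applying minimality both ways.

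\textbf{Part (ii).} An essentially symmetric argument handles $K + \sD(I-K;F) \in \kM$: it extends $\fK(S)$ (by the same nullspace-contains-$\dom(\fK(S))$ reasoning), and $-I \le K \le K + \sD(I-K;F) \le K + (I-K) = I$. Maximality together with (\ref{eqn:K_M}) then follows by applying Lemma \ref{lem:fundamental} to $K' - K \le I - K$. The step I expect to be the main obstacle, and the only place where densely-definedness of $S$ enters, is verifying $\nulN\bigl(I - K - \sD(I-K;F)\bigr) = 0$. Writing $(I-K) = \bigl(I - K - \sD(I-K;F)\bigr) + \sD(I-K;F)$ as a sum of positives and invoking the positive-sum trick, any $x$ in this nullspace satisfies $(I-K)x = \sD(I-K;F)x \in \overline{\ran(\sD(I-K;F))} \subseteq \dom(\fK(S))^\perp$; self-adjointness of $I-K$ and its agreement with $I - \fK(S)$ on $\dom(\fK(S))$ then give
\[
    0 = \ipdt{(I-K)x}{y} = \ipdt{x}{(I-K)y} = \ipdt{x}{(I - \fK(S))y} \quad \text{for every } y \in \dom(\fK(S)).
\]
By (\ref{eqn:krein_transform}), $I - \fK(S) = 2(S+I)^{-1}$, so $\{(I - \fK(S))y : y \in \dom(\fK(S))\} = 2\,\dom(S)$, which is dense in $\sH$ precisely because $S$ is densely-defined; this forces $x = 0$ and completes the proof.
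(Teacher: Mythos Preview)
Your proof is correct and follows essentially the same approach as the paper: parameterize self-adjoint extensions $K'$ of $\fK(S)$ in $\kM\cap\sF(\sH)$ via $C=K'-K$ with $\raR(C)\le F$ and $-(I+K)\le C\le I-K$, then invoke Lemma~\ref{lem:fundamental} to sandwich $C$ between $-\sD(I+K;F)$ and $\sD(I-K;F)$. The only noteworthy difference is in part~(ii), where your verification of $\nulN\bigl(I-K-\sD(I-K;F)\bigr)=0$ is more circuitous than necessary: since you have already shown that $K_M:=K+\sD(I-K;F)$ extends $\fK(S)$, the range of the self-adjoint operator $I-K_M$ contains $\ran(I-\fK(S))=2\,\dom(S)$, which is dense; hence $\nul(I-K_M)=\ran(I-K_M)^\perp=\{0\}$ directly.
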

\begin{proof}
Since $\dom (\fK(T)) \in \affs{\kM}$, clearly $F \in \kM$. For the sake of notational simplicity, we define, 
\[
K_{m} := K - \sD(I+K;F),\; K_{M} := K + \sD(I-K;F).
\]

Since $K \in \kM \cap \sF(\sH)$, we have $\|K\| \le 1$ so that $-I \le K \le I$. From Lemma \ref{lem:fundamental}, we know that 
\begin{align}
\sD(I+K; F) &\in \kM^{+}, &\sD (I-K; F) &\in \kM^{+}, \label{eqn:sa_1}\\
\sD(I+K; F) &\le I+K, &\sD(I-K; F) &\le I-K,\label{eqn:sa_2}\\
\raR \big( \sD(I + K; F) \big) &\le F, &\raR \big( \sD(I - K; F) \big) &\le F.\label{eqn:sa_3}
\end{align}

Let $K'$ be a self-adjoint extension of $\fK(T)$ lying in $\kM \cap \sF(\sH)$ and $C:=K'-K$, that is, $K' = K + C$. Then it is easy to see that $K'\in \sF(\sH)$ if and only if $C$ is self-adjoint, $\raR(C) \le F$ and $-(I+K) \le C \le I - K$. Hence applying Lemma \ref{lem:fundamental}, $C$ satisfies
\[
-\sD(I+K, F) \le C \le \sD(I-K, F).
\]
Adding $K$ to all terms of the inequality, we have 
\begin{equation}
\label{eqn:K_m_le_K_K_M}
K_m \le K' \le K_M.    
\end{equation}

A key thing to verify is whether $K_m, K_M$ lie in $\sF(\sH)$.
\vskip 0.1in

\noindent {\bf Observation 1:} $K_m, K_M$ are self-adjoint extensions of $\fK(S)$ lying in $\kM$.
\vskip 0.02in
\noindent {\it Proof of Observation 1}: Note that (\ref{eqn:sa_3}) is equivalent to,
$$I-F \le \nulN \big( \sD(I + K; F) \big),\;\; I-F \le \nulN \big( \sD(I - K; F) \big).$$
Since $\dom \big( \fK(S) \big) \subseteq \ran(I-F)$, for every $x\in \dom \big( \fK(S) \big)$ we get, $$\sD(I + K; F)x = \sD(I - K; F)x = 0,$$ whence $K_m = K_M = K$ on $\dom \big( \fK(S) \big)$. 
\vskip 0.1in

\noindent {\bf Observation 2: } $\|K_m \| \le 1, \|K_M\| \le 1$.
\vskip 0.02in
\noindent {\it Proof of Observation 2}: From (\ref{eqn:sa_1}) and (\ref{eqn:sa_2}), clearly
\[
-I \le K_m \le K \le K_M \le I.
\]

\vskip 0.1in
\begin{itemize}[leftmargin=0.8cm, itemsep=0.2cm]
    \item[(i)] Since $0 \le I - K \le I-K_m$, we have $\nul (I-K_m) \subseteq \nul (I-K) = \{ 0 \}_{\sH}$ so that $\nul (I-K_m) = \{ 0 \}_{\sH}$. Together with Observations 1 and 2, this tells us that $K_m \in \kM \cap \sF(\sH)$. In other words, for every self-adjoint extension $K$ of $\fK(S)$ lying in the $\kM \cap \sF(\sH)$, the operator $K_m$ is again such an extension of $\fK(T)$. From equation (\ref{eqn:K_m_le_K_K_M}), we have $$K_m\le K'.$$ 
    In particular, the above inequality is also valid for  $(K')_m$ so that $K_m \le (K')_m$ for every pair of self-adjoint extensions, $K, K'$, of $\fK(S)$ lying in $\kM \cap \sF(\sH)$. By symmetry, it is clear that 
    $$K_m = (K')_m.$$

    \item[(ii)] If $S$ is densely-defined, then $I-\fK(S)$ has dense range. Thus for any self-adjoint extension $K$ of $\fK(S)$, $I-K$ has dense range or equivalently, trivial nullspace. In conjunction with Observations 1 and 2, we have $K_M \in \kM \cap \sF(\sH)$. By a similar argument as in part (i) above, it follows that $K_M = (K')_M$ for every pair of self-adjoint extensions, $K, K'$, of $\fK(S)$ lying in $\kM \cap \sF(\sH)$.   \qedhere
\end{itemize}
\end{proof}

\begin{remark}
\label{rem:kr_fr_pres}
Let $S$ be a densely-defined positive symmetric operator. Let $K$ be the Krein transform of a positive self-adjoint extension of $S$ (such an extension always exists as discussed above). Note that $K$ is a positive self-adjoint extension of $\fK(S)$, and by Proposition \ref{prop:kt_min_max}, the extensions 
\begin{equation}
\label{eqn:min_max_formula}
    \fK(S)_{\min}:=K-\sD(I+K; F),\;\; \fK(S)_{\max}:=K+\sD(I-K; F),
\end{equation}
are respectively the minimal and maximal self-adjoint extensions of $\fK(T)$ lying inside $\kM\cap \sF(\sH)$. Although already noted in equations (\ref{eqn:K_m}) and (\ref{eqn:K_M}), we take a moment to emphasize that the expressions in (\ref{eqn:min_max_formula}) evaluate to the minimal and maximal extensions, respectively, {\it independent} of the choice of $K$. 

Using the inverse Krein transform and Proposition \ref{prop:krein_tranform}, we conclude that $S$ has a minimal and maximal self-adjoint extension, respectively given by, 
\[
\fK^{-1} \big( \fK(S)_{\min} \big), \;\; \fK^{-1} \big( \fK(S)_{\max} \big).
\]
In fact, as shown in \cite[\S 4]{skau_paper}, these extensions respectively recover the Krein-von Neumann and Friedrichs extensions of $S$, that is,
\begin{equation}
\label{eqn:kr_fr_min_max}
\krein{S} = \fK^{-1}(\fK(S)_{\min}),\;\; \fried{S}=\fK^{-1}(\fK(S)_{\max}). 
\end{equation}
\end{remark}

Let $(\afm^c)^{+}$ denote the set of closed positive operators affiliated with $\kM$. It is now clear that if $S \in (\afm ^c)^{+}$ and $ B \in \afm ^{cb} \cap \sF(\sH)$, then $\fK(S) \in \afm^{cb} \cap \sF(\sK)$ and $\fK^{-1}(B) \in (\afm ^c)^{+}$.

\begin{lem}
\label{lem:phi_krein}
\textsl{
Let $(\mathscr{M}; \sH), (\mathscr{N}; \sK)$ be represented von Neumann algebras and $\Phi : \mathscr{M} \to \mathscr{N}$ be a unital normal $*$-homomorphism. Then $\Phi_{\text{aff}} \circ \fK=\fK\circ \Phi_{\text{aff}}$ on $(\afm^c)^+$, that is, the following diagram commutes:
}
	\[
		\begin{tikzcd}
			(\afm^c)^+ \arrow[r, "\Phi_{\textnormal{aff}}"] \arrow[d, "\mathfrak{K}"'] & (\afn^c)^+ \arrow[d, "\mathfrak{K}"] \\
			\afm^{cb}\arrow[r, "\Phi_{\textnormal{aff}}"']  & \afn^{cb}         
		\end{tikzcd}
	\]
\end{lem}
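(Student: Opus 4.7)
The plan is to use the algebraic formula $\mathfrak{K}(T) = I - 2(T+I)^{-1}$ recorded in equation~(\ref{eqn:krein_transform}) and to exploit that $\Phi_{\text{aff}}$ preserves the four basic algebraic operations on $\afm$ (Theorem~\ref{thm:phi_monoid_morph}). The entire argument will reduce to a short chain of formal identities, with essentially no analysis.

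Fix $S \in (\afm^c)^+$. First I would check that both sides of the desired equality make sense. Since $\Phi_{\text{aff}}$ preserves closedness (Theorem~\ref{thm:phi_pres_closed}-(iii)) and positivity (Theorem~\ref{thm:sym_acc_sect}-(ii)), the operator $\Phi_{\text{aff}}(S)$ lies in $(\afn^c)^+$; thus $\mathfrak{K}(\Phi_{\text{aff}}(S))$ is defined. On the other side, $\mathfrak{K}(S)\in\afm^{cb}\subseteq\afm$, so $\Phi_{\text{aff}}(\mathfrak{K}(S))$ is defined as an element of $\afn$.

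The main computation then proceeds as follows. Since $S\ge 0$, the operator $S+I$ is one-to-one (Proposition~\ref{prop:kr_SplusI}-(i)); hence by Theorem~\ref{thm:m_aff_is_kau_closed}-(iii), its Kaufman inverse $(S+I)^{\dagger}$ agrees with the algebraic inverse $(S+I)^{-1}$ inside $\afm$. Invoking Theorem~\ref{thm:phi_monoid_morph} (preservation of sum, product, and Kaufman inverse) together with unitality of $\Phi$, we compute
\[
\Phi_{\text{aff}}\bigl(\mathfrak{K}(S)\bigr)=\Phi_{\text{aff}}\bigl(I-2(S+I)^{\dagger}\bigr)=I-2\,\bigl(\Phi_{\text{aff}}(S+I)\bigr)^{\dagger}=I-2\,\bigl(\Phi_{\text{aff}}(S)+I\bigr)^{\dagger}.
\]
Positivity of $\Phi_{\text{aff}}(S)$ makes $\Phi_{\text{aff}}(S)+I$ one-to-one, so this last Kaufman inverse equals the ordinary inverse; a second appeal to equation~(\ref{eqn:krein_transform}) identifies the right-hand side with $\mathfrak{K}\bigl(\Phi_{\text{aff}}(S)\bigr)$, completing the chain.

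The only point at which something could conceivably go wrong is the bookkeeping of domains hidden in the identity $\mathfrak{K}(S)=I-2(S+I)^{-1}$, but the functorial machinery packaged in Theorem~\ref{thm:phi_monoid_morph} routes the entire calculation through the algebraic operations on $\afm$, where domain issues have already been settled once and for all. Consequently there is no genuine obstacle here: the lemma is a clean formal consequence of the functoriality results established earlier, which is precisely why it was worth setting up Theorem~\ref{thm:phi_monoid_morph} in the first place.
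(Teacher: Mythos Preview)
Your proof is correct and follows essentially the same approach as the paper: both arguments apply $\Phi_{\text{aff}}$ to the identity $\fK(S)=I-2(S+I)^{-1}$ from equation~(\ref{eqn:krein_transform}) and push $\Phi_{\text{aff}}$ through using Theorem~\ref{thm:phi_monoid_morph}. Your version is slightly more explicit in checking well-definedness and in routing through the Kaufman inverse, but the core computation is identical.
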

\begin{proof}
	For $S\in (\afm^c)^+$, from equation (\ref{eqn:krein_transform}) and Theorem \ref{thm:phi_monoid_morph}  we have,
\begin{align*}
    \Phi_{\tn\aff} \big( \fK(S) \big)
    &=\Phi_{\tn\aff}\left(I-2(S+I)^{-1}\right) = I - 2 \Phi_{\tn\aff}\big( (S+I)^{-1} \big)\\
    &= I - 2 \left( \Phi_{\tn\aff}(S)+I\right)^{-1} 
    =\fK \big( \Phi_\aff(S) \big).   \qedhere
\end{align*}
\end{proof}

In the next theorem, we show that $\Phi_\aff$ preserves $\fK(S)_{\min}$ and $\fK(S)_{\max}$ which yields our main result that $\Phi_{\tn\aff}$ maps $\krein{S}, \fried{S} \in \afm^{\text{MvN}}$ to their respective counterparts for $\Phi_{\tn\aff}(S)$.
\begin{thm}
\label{thm:phi_min_max}
\textsl{
Let $(\mathscr{M}; \sH), (\mathscr{N}; \sK)$ be represented von Neumann algebras and $\Phi : \mathscr{M} \to \mathscr{N}$ be a unital normal $*$-homomorphism. Let $S$ be a densely-defined closed positive operator in $\afm$. (Note that by Theorem \ref{thm:phi_pres_closed}-(i) and Theorem \ref{thm:sym_acc_sect}, $\Phi_{\tn\aff}(S)$ is a densely-defined closed positive operator in $\afn$.) Then $\Phi_{\tn\aff}$ maps the Krein-von Neumann extension (the Friedrichs extension, respectively) of $S$ to the Krein-von Neumann extension (the Friedrichs extension, respectively) of $\Phi_{\tn\aff}(S)$, that is, 
	\[
	\Phi_{\tn\aff}(\krein{S}) = \krein{\Phi_{\tn\aff}(S)}\text{ and }~
	\Phi_{\tn\aff}(\fried{S}) = \fried{\Phi_{\tn\aff}(S)}.
	\]
}
\end{thm}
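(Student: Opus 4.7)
The plan is to reduce the statement to a check at the level of the minimal and maximal self-adjoint extensions of the Krein transform $\fK(S)$ in $\kM \cap \sF(\sH)$, and then verify this reduction by direct inspection of the explicit formulae in equation (\ref{eqn:min_max_formula}).

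From equation (\ref{eqn:kr_fr_min_max}) we have $\krein{S} = \fK^{-1}(\fK(S)_{\min})$ and $\fried{S} = \fK^{-1}(\fK(S)_{\max})$, together with the analogous identities for $\Phi_{\tn\aff}(S)$. Lemma \ref{lem:phi_krein} combined with the bijectivity of $\fK$ from Proposition \ref{prop:krein_tranform} yields $\Phi_{\tn\aff} \circ \fK^{-1} = \fK^{-1} \circ \Phi_{\tn\aff}$ on $\afm^{cb} \cap \sF(\sH)$. Thus the theorem reduces to proving
\[
\Phi_{\tn\aff}(\fK(S)_{\min}) = \fK(\Phi_{\tn\aff}(S))_{\min}, \qquad \Phi_{\tn\aff}(\fK(S)_{\max}) = \fK(\Phi_{\tn\aff}(S))_{\max}.
\]

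Because $S$ is densely-defined, a positive self-adjoint extension of $S$ affiliated with $\kM$ exists (take for instance $\fried{S}$), so $K := \fK(\fried{S})$ is a self-adjoint extension of $\fK(S)$ in $\kM \cap \sF(\sH)$. Theorem \ref{thm:sym_acc_sect} together with Lemma \ref{lem:phi_krein} shows that $\Phi(K) = \fK(\Phi_{\tn\aff}(\fried{S}))$ is likewise a self-adjoint extension of $\fK(\Phi_{\tn\aff}(S))$ lying in $\kN \cap \sF(\sK)$. Let $F$ denote the projection onto $\dom(\fK(S))^\perp$ in $\sH$, and $F'$ the analogous projection for $\Phi_{\tn\aff}(S)$ in $\sK$. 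Substituting into formulae (\ref{eqn:min_max_formula}) and applying $\Phi$, the remaining work splits into two functoriality claims: \emph{(a)} $\Phi(F) = F'$, and \emph{(b)} $\Phi(\sD(A;E)) = \sD(\Phi(A);\Phi(E))$ for every $A \in \kM^{+}$ and every projection $E \in \kM$.

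For \emph{(a)}, the operator $\fK(S)$ lies in $\afm^{cb}$, so $\dom(\fK(S))$ is closed by Remark \ref{rem:closed_bdd} and equals $\ran(P)$ for some projection $P \in \kM$; hence $F = I - P$. Combining Theorem \ref{thm:phi_pres_closed}-(i) with Lemma \ref{lem:phi_krein} yields $\dom(\fK(\Phi_{\tn\aff}(S))) = \dom(\Phi_{\tn\aff}(\fK(S))) = \Phi_{\tn\aff}^{s}(\ran(P)) = \ran(\Phi(P))$, which forces $\Phi(P)$ to be the projection onto $\dom(\fK(\Phi_{\tn\aff}(S)))$, and thus $\Phi(F) = I - \Phi(P) = F'$. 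For \emph{(b)}, applying $\Phi$ to formula (\ref{eqn:d_a_e}) and using normality of $\Phi$ (so that $\Phi(A^{1/2}) = \Phi(A)^{1/2}$ via the continuous functional calculus) together with Lemma \ref{lem:morph_prop} (to pass $\Phi$ through the null projection $\nulN((I-E)A^{1/2})$) gives the required identity. Finally, applying $\Phi$ to (\ref{eqn:min_max_formula}) and invoking Theorem \ref{thm:phi_monoid_morph}, \emph{(a)}, and \emph{(b)} yields the two displayed equations and completes the proof. The main subtlety is \emph{(a)}: one must carefully identify $\dom(\fK(S))$ as a \emph{closed} affiliated subspace and then leverage the functoriality of $\affs{\kM}$ developed in \S \ref{sec:aff_subspaces}; once this is in place, everything else is a straightforward assembly of already-established results.
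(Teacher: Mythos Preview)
Your proof is correct and follows essentially the same approach as the paper's: reduce via the Krein transform to showing $\Phi(\fK(S)_{\min}) = \fK(\Phi_{\tn\aff}(S))_{\min}$ and the analogous identity for $\max$, then verify these from the explicit formula (\ref{eqn:min_max_formula}) using $\Phi(\sD(A;E)) = \sD(\Phi(A);\Phi(E))$. Your claim \emph{(a)} that $\Phi(F) = F'$ is in fact used implicitly by the paper without justification, so your treatment is more careful on this point; otherwise the two arguments coincide.
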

\begin{proof}
Let $\sD(\cdot\; ; \cdot)$ be as in Lemma \ref{lem:fundamental}.
For $A \in \kM^{+}, E \in \kM_{\text{proj}}$, it is clear from Lemma \ref{lem:morph_prop} and equation (\ref{eqn:d_a_e}) that,
\begin{equation}
    \label{eqn:phi_d}
    \Phi\big( \sD(A; E)\big) = \sD \big( \Phi(A); \Phi(E) \big).
\end{equation}
Let $K$ be a bounded self-adjoint extension of $\fK(S)$ lying in $\kM \cap \sF(\sH)$; (The Krein transform of the Friedrichs extension of $S$ is one such extension.)
\vskip 0.05in
\noindent {\bf Observation 1:} $\Phi(K)$ is a bounded self-adjoint extension of $\fK \big( \Phi_{\tn\aff}(S) \big)$ lying in $\kN \cap \sF(\sK)$.
\vskip 0.05in
\noindent {\it Proof of Observation 1:} From Theorem \ref{thm:phi_pres_ext}, we observe that $\Phi(K)$ is a bounded self-adjoint extension of $\Phi \big( \fK(S) \big) = \fK \big( \Phi_{\tn\aff}(S) \big)$. Since $K \in \sF(\sH)$, note that $\|K\| \le 1$ and $\nul(I-K) = \{ 0 \}_{\sH}$. Thus $\|\Phi(K)\| \le 1 $ and using Lemma \ref{lem:morph_prop}, we get $\nul \big( I- \Phi(K) \big) = \{0\}_{\sK}$, so that $\Phi(K) \in \sF(\sK)$.
\vskip 0.05in

\noindent {\bf Observation 2:} $\Phi\big( \fK(S)_{\min} \big) = \Big(\fK \big( \Phi_{\tn\aff}(S) \big) \Big)_{\min}$, $\Phi\big( \fK(S)_{\max} \big)= \Big(\fK\big(\Phi_{\tn\aff}(S)\big)\Big)_{\max}$.
\vskip 0.02in
\noindent {\it Proof of Observation 2:} Using Observation 1 in the context of Remark \ref{rem:kr_fr_pres}, we note that,
\begin{align*}
    \Big(\fK\big(\Phi_{\tn\aff}(S)\big)\Big)_{\min} 
    &\overset{(\ref{eqn:min_max_formula})}{=}
    \Phi(K) - \sD\big( I+\Phi(K); \Phi(F)\big) \\
    &\overset{(\ref{eqn:phi_d})}{=}
    \Phi \big( K - \sD\left( I + K; F \right) \big)\\
    &\overset{(\ref{eqn:min_max_formula})}{=}
    \Phi\big( \fK(S)_{\min} \big). 
\end{align*}
A similar approach gives us, $\Phi\big( \fK(S)_{\max} \big)= \Big(\fK\big(\Phi_{\tn\aff}(S)\big)\Big)_{\max}$.

Thus we have, \begin{align*}\fK\left( \krein{\Phi_{\tn\aff}(S)} \right) &= \Big(\fK \big( \Phi_{\tn\aff}(S) \big) \Big)_{\min}, \text{ by } (\ref{eqn:kr_fr_min_max})\\
&=\Phi\left( \fK(S)_{\min} \right), \text{ by Observation 2}\\
&= \Phi \big( \fK(\krein{S}) \big), \text{ by } (\ref{eqn:kr_fr_min_max})\\
&= \fK \big( \Phi_{\tn\aff}(\krein{S}) \big), \text{ by Lemma } \ref{lem:phi_krein}. 
\end{align*}
An application of the inverse Krein transform on both sides leads to the desired conclusion,
\[
 \krein{\Phi_{\tn\aff}(S)} = \krein{\Phi_{\tn\aff}(S)} .
\]
The proof for $\fried{S}$ is similar.
\end{proof}

\section{An Application of Functoriality}%
\label{sec:applications}%

Experience shows that the domains of unbounded operators can behave in mysterious ways, which is a common source of frustration for the uninitiated. The literature abounds in examples and counterexamples discussing these peculiarities. The literature is equally sparse regarding analogous results in the context of affiliated operators. In this section, we hope to remedy this.

Let $\kM$ be a properly infinite von Neumann algebra acting on the Hilbert space $\sH$. Consider the following three questions.
\vskip 0.05in
\noindent {\it Question 1.} Is there a positive self-adjoint operator $T$ in $\afm^c$ and a unitary operator $U \in \kM$ such that $\dom (T) \cap \dom(UTU^*) = \{ 0 \}_{\sH}$?
\vskip 0.02in

\noindent {\it Question 2.} Is there a densely-defined closed symmetric operator $A$ in $\afm ^c$ such that $\dom (A^2) = \{ 0 \}_{\sH}$? 
\vskip 0.02in

\noindent {\it Question 3.}\ Are there non-singular positive self-adjoint operators $A, B \in \afm ^c$ satisfying $\dom(A) \cap \dom(B) = \dom (A^{-1}) \cap \dom (B^{-1}) = \{ 0 \}_{\sH}$?
\vskip 0.05in

Since the commutant of $\sB(\sH)$ is trivial, recall that every closed operator on $\sH$ is affiliated with $\sB(\sH)$, that is, $\sC(\sH) = \sB(\sH)_{\aff}^c$. Specialists in the theory of unbounded operators will immediately recognize the inspiration behind the above-mentioned questions. In the context of $\kM = \sB(\sH)$, the first one is a weaker version of a classical result due to von Neumann (see \cite{neumann1929}); The second problem has been studied by Neumark (see \cite{neumark_square}) with stronger versions explored by Schm\"{u}dgen (see \cite{konrad_1983}) and an explicit example also given by Chernoff (see \cite{chernoff_paul}); The third problem is studied by Kosaki in \cite{kosaki_domains}. Using the functoriality of the near-semiring of affiliated operators, we provide a strategy for answering questions of a similar nature that may occur during day-to-day musings of the working operator algebraist. The main ingredient is the following well-known proposition.

\begin{prop}[see {\cite[Proposition V.1.22]{takesaki-I}}]
\label{prop:bh_embed}
\textsl{
Let $\kM$ be a properly infinite von Neumann algebra, and $\sH$ be a separable Hilbert space. Then there is a unital normal embedding of $\sB(\sH)$ into $\kM$. 
}
\end{prop}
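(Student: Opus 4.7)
The plan is to construct the embedding by exhibiting a system of matrix units inside $\kM$ indexed by $\N \times \N$, and then showing that the von Neumann subalgebra they generate is isomorphic to $\sB(\sH)$. First, I would invoke the standard fact that in a properly infinite von Neumann algebra the identity $I$ decomposes as an SOT-convergent sum $I = \sum_{n=1}^{\infty} E_n$ of mutually orthogonal projections $\{E_n\}_{n \ge 1}$ in $\kM$, each equivalent to $I$. This follows from iteratively halving $I$: properly infinite means $I = P_1 + P_2$ with $P_1 \sim P_2 \sim I$, and repeating inside $P_2 \kM P_2$ (which is again properly infinite) together with a maximality argument gives the desired countable decomposition.

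Second, since $E_1 \sim E_n$ in $\kM$ for every $n$, I would choose partial isometries $V_n \in \kM$ with $V_n^* V_n = E_1$ and $V_n V_n^* = E_n$ (taking $V_1 := E_1$), and define $V_{mn} := V_m V_n^*$ for $m, n \ge 1$. A direct computation using $V_n^* V_k = \delta_{nk} E_1$ yields the matrix unit relations
\[
V_{mn} V_{kl} = \delta_{nk} V_{ml}, \qquad V_{mn}^* = V_{nm}, \qquad V_{nn} = E_n, \qquad \sum_{n=1}^\infty V_{nn} = I.
\]
Fixing an orthonormal basis $\{e_n\}_{n \ge 1}$ of $\sH$ with associated matrix units $\{e_{mn}\}$ in $\sB(\sH)$, I would define a candidate embedding $\Phi : \sB(\sH) \to \kM$ that sends $e_{mn}$ to $V_{mn}$.

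Third, the main technical step is to make sense of $\Phi$ on all of $\sB(\sH)$ and verify normality. The clean conceptual route is spatial: writing $\sH'$ for the Hilbert space on which $\kM$ acts, the decomposition $\sH' = \bigoplus_n E_n \sH'$ together with the identifications $E_n \sH' \cong E_1 \sH'$ induced by the $V_n$ yields a unitary $\sH' \cong E_1 \sH' \otimes \ell^2(\N)$ under which $V_{mn}$ corresponds to $I_{E_1 \sH'} \otimes e_{mn}$. The SOT-closure of the $*$-algebra generated by $\{V_{mn}\}$ is then $\C I_{E_1 \sH'} \otimes \sB(\ell^2(\N))$, which is $*$-isomorphic to $\sB(\sH)$ via a spatial isomorphism; this gives the required unital normal embedding $\Phi$ into $\kM$. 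Injectivity is automatic since $\sB(\sH)$ is simple (as a $C^*$-algebra modulo the compacts is irrelevant here — $\sB(\sH)$ has no nontrivial weak-$*$-closed ideals).

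The main obstacle is simply the bookkeeping in the spatial identification; one could instead proceed purely algebraically by first defining $\Phi$ on finite linear combinations $\sum \alpha_{mn} e_{mn}$ via $\sum \alpha_{mn} V_{mn}$, checking from the matrix unit relations that this is a $*$-homomorphism of $C^*$-algebras onto a norm-dense subalgebra of the $C^*$-algebra generated by $\{V_{mn}\}$, establishing isometry by identifying both sides with the $C^*$-algebra of compact operators $\sK(\sH)$, and then extending to the SOT-closures using the Kaplansky density theorem; normality then comes from the fact that a bounded $*$-homomorphism between von Neumann algebras that is a spatial/SOT-continuous extension is automatically normal. Either route is routine once the matrix units are in hand.
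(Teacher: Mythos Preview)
Your proposal is correct and follows essentially the same approach as the paper: both invoke the halving lemma for properly infinite projections to decompose $I$ as $\sum_n E_n$ with mutually orthogonal, mutually equivalent projections, and then pass to a system of matrix units to obtain the embedding. The paper's proof is little more than a citation to Kadison--Ringrose for these steps, whereas you have written out the matrix-unit construction and the spatial identification $\sH' \cong E_1\sH' \otimes \ell^2(\N)$ in detail; your parenthetical remark on injectivity is slightly garbled (indeed $\sB(\sH)$ is not simple as a $C^*$-algebra), but the intended argument via the absence of nontrivial weak-$*$-closed ideals is fine, and in any case injectivity is already evident from the spatial picture.
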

\begin{proof}
Without loss of generality, we may assume that $\sH = \ell^2(\N)$. Using the so-called halving lemma for properly infinite projections (\cite[Lemma 6.3.3]{kr-II}), it can be shown that that there exists a sequence of mutually orthogonal projections $\{E_n\}_{n=1}^{\infty}$ such that $E_n \sim E_m$ for all $n, m$ and $\sum_{n} E_n = I$; the details may be found in \cite[Theorem 6.3.4]{kr-II}. This yields a countable system of matrix units in $\kM$, and the assertion follows.
\end{proof}

Proposition \ref{prop:bh_embed} helps us in viewing properly infinite von Neumann algebras as a larger universe containing $\sB \big( \sH \big)$. Since a unital normal embedding of $\sB \big( \sH \big)$ into $\kM$ lifts to a canonical embedding of $\sC(\sH)$ into $\afm$, note that $\afm$ provides a larger mathematical universe where one can do physics. For example, pseudo-differential operators in $\sC \big( H^k(\R) \big)$ (with $H^k(\R)$ a Sobolev space) and differential equations may be transported to the setting of $\afm$; the Heisenberg commutation relation can find a home in $\afm$, that is, there are closed operators $P, Q \in \afm^{\text{MvN}}$ such that $PQ-QP$ is pre-closed and densely-defined with closure $I$. Although what is to be gained from such an exercise is a matter of speculation at this moment, some evidence in its favour may be gleaned from an application of the continuous dimension theory of type $II_{\infty}$ von Neumann algebras to finite difference equations given by Schaeffer in \cite{schaeffer}.

As a proof of concept, we provide an affirmative answer to Question 3 and leave the first two questions to the reader.

\begin{thm}
\textsl{
Let $\kM$ be a properly infinite von Neumann algebra acting on the Hilbert space $\sH$. Then there is a pair of non-singular positive self-adjoint operators $A, B$ affiliated with $\kM$ with their domains satisfying, $$\dom(A) \;\cap\; \dom(B) = \dom (A^{-1}) \;\cap\; \dom (B^{-1}) = \{ 0 \}_{\sH}.$$
}
\end{thm}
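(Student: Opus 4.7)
The plan is to reduce the statement to the special case $\kM = \sB(\sK)$ for a separable Hilbert space $\sK$, and then transport the result to $\kM$ using the functoriality of the affiliation construction developed earlier. The $\sB(\sK)$ case is Kosaki's theorem from \cite{kosaki_domains}, which produces a pair of non-singular positive self-adjoint operators $A_0, B_0 \in \sC(\sK) = \sB(\sK)^c_{\aff}$ with $\dom(A_0) \cap \dom(B_0) = \dom(A_0^{-1}) \cap \dom(B_0^{-1}) = \{0\}_{\sK}$.

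Fix a separable Hilbert space $\sK$ and let $A_0, B_0 \in \sB(\sK)^c_{\aff}$ be as above. By Proposition \ref{prop:bh_embed}, there exists a unital normal embedding $\Phi : \sB(\sK) \to \kM$, which by Definition \ref{defn:Phi_aff} extends canonically to a mapping $\Phi_{\tn\aff} : \sB(\sK)_{\aff} \to \afm$. Define $A := \Phi_{\tn\aff}(A_0)$ and $B := \Phi_{\tn\aff}(B_0)$. Since $A_0, B_0$ are closed, positive, self-adjoint and non-singular, Theorem \ref{thm:phi_pres_closed}-(iii), Theorem \ref{thm:sym_acc_sect}-(ii),(v), and Theorem \ref{thm:phi_monoid_morph}-(iii) imply that $A, B$ are closed, positive, self-adjoint and non-singular operators in $\afm$, with $A^{-1} = \Phi_{\tn\aff}(A_0^{-1})$ and $B^{-1} = \Phi_{\tn\aff}(B_0^{-1})$.

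It remains to verify the domain intersection conditions. By Theorem \ref{thm:phi_pres_closed}-(i), for each of the four operators $A_0, B_0, A_0^{-1}, B_0^{-1}$, we have $\dom(\Phi_{\tn\aff}(T)) = \Phi^s_{\tn\aff}(\dom(T))$. By Theorem \ref{thm:dom_ran}, the domains $\dom(A_0), \dom(B_0)$ lie in $\affs{\sB(\sK)}$, so by Lemma \ref{lem:lat_of_ran}-(i), their intersection is again an affiliated subspace. Since $\Phi^s_{\tn\aff}$ is a lattice homomorphism (Theorem \ref{thm:lat_morph}-(i)) sending $\{0\}_{\sK}$ to $\{0\}_{\sH}$ (Theorem \ref{thm:lat_morph}-(ii)), we compute
\begin{align*}
\dom(A) \cap \dom(B) &= \Phi^s_{\tn\aff}(\dom(A_0)) \cap \Phi^s_{\tn\aff}(\dom(B_0)) \\
&= \Phi^s_{\tn\aff}(\dom(A_0) \cap \dom(B_0)) = \Phi^s_{\tn\aff}(\{0\}_{\sK}) = \{0\}_{\sH}.
\end{align*}
The identical computation with $A_0^{-1}, B_0^{-1}$ in place of $A_0, B_0$ yields $\dom(A^{-1}) \cap \dom(B^{-1}) = \{0\}_{\sH}$.

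The only substantive obstacle is the input from $\sB(\sK)$, which we are importing as a black box from Kosaki's work; everything else is an essentially formal consequence of the functorial machinery, particularly the interaction between $\Phi_{\tn\aff}$ on operators and $\Phi^s_{\tn\aff}$ on affiliated subspaces via Theorem \ref{thm:phi_pres_closed}-(i). This illustrates the strategy advertised in the paragraph preceding the theorem: properly infinite von Neumann algebras admit a canonical embedding of $\sB(\sK)$, and via $\Phi_{\tn\aff}$ any pathological closed operator living on $\sK$ gives rise to a correspondingly pathological affiliated operator for $\kM$, with the relevant domain structure transported verbatim by the lattice homomorphism $\Phi^s_{\tn\aff}$.
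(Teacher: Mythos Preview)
Your proof is correct and follows essentially the same approach as the paper: invoke Kosaki's result on $\sB(\sK)$, embed via Proposition~\ref{prop:bh_embed}, and transport all operator properties and domain intersections through $\Phi_{\tn\aff}$ and $\Phi^s_{\tn\aff}$ using the same collection of functoriality results. The only differences are cosmetic (you cite Theorem~\ref{thm:lat_morph}-(ii) explicitly for $\Phi^s_{\tn\aff}(\{0\}_{\sK}) = \{0\}_{\sH}$, and you add a brief justification via Theorem~\ref{thm:dom_ran} and Lemma~\ref{lem:lat_of_ran} that the domain intersection lies in $\affs{\sB(\sK)}$).
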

\begin{proof}
By \cite[Proposition 13]{kosaki_domains}, there is a pair of non-singular positive self-adjoint operators $A', B'$ on $\ell ^2(\N)$ satisfying the above-mentioned domain conditions, that is,
$$\dom(A') \;\cap\; \dom(B') = \dom (A'^{-1}) \;\cap\; \dom (B'^{-1}) = \{ 0 \}_{\ell ^2(\N) }.$$

Let $\Phi : \sB(\ell ^2(\N)) \to \kM$ be a unital normal embedding (say, as in Proposition \ref{prop:bh_embed}), and $\Phi_{\aff} : \sB(\ell ^2(\N))_{\aff}^c \to \afm ^c$ be the canonical extension.  From Theorem \ref{thm:phi_monoid_morph}-(iii), note that $\Phi_{\aff}(A'), \Phi_{\aff}(B')$ are non-singular, and $$\Phi_{\aff}(A'^{-1}) = \Phi_{\aff}(A')^{-1},\; \Phi_{\aff}(B'^{-1}) = \Phi_{\aff}(B')^{-1}.$$ By Theorem \ref{thm:sym_acc_sect}-(ii)-(v) and Theorem \ref{thm:phi_pres_closed}-(iii), we observe that $\Phi_{\aff}(A), \Phi_{\aff}(B)$ are positive self-adjoint operators in $\afm ^c$. From Theorem \ref{thm:phi_pres_closed}-(i) and Theorem \ref{thm:lat_morph}-(i), we note that, 
\begin{align*}
\dom \big( \Phi_{\aff}(A') \big) \cap \dom \big( \Phi_{\aff}(B') \big) &= \Phi_{\aff}^s \big( \dom (A') \cap \dom(B') \big) \\
&= \Phi_{\aff}^s(\{ 0 \}_{\ell^2(\N)})  = \{0\}_{\sH},\\
\dom \big( \Phi_{\aff}(A')^{-1} \big) \cap \dom \big( \Phi_{\aff}(B')^{-1} \big) &= \Phi_{\aff}^s \big( \dom (A'^{-1} ) \cap \dom \big( B'^{-1}) \big)\\
&= \Phi_{\aff}^s(\{ 0 \}_{\ell ^2(\N)}) = \{ 0 \}_{\sH}.
\end{align*}
Thus, $A = \Phi_{\aff}(A'), B = \Phi_{\aff}(B')$ satisfy the conditions stipulated in the assertion.
\end{proof}

\section{Acknowledgements}%
\label{sec:acknowledge}%

The first-named author is supported by a research fellowship provided by the Indian Statistical Institute. The second-named author is supported by Startup Research Grant (SRG/2021/002383) of SERB (Science and Engineering Research Board, India), and Startup-Grant by the Indian Statistical Institute (dated June 17, 2021). He would also like to express his gratitude to Charanya Ravi for helpful discussions. We would like to thank K V Krishna for sharing his PhD thesis on near-semirings and related references with us.%

\medskip

\bibliographystyle{amsalpha} 
\bibliography{references}%

\providecommand{\bysame}{\leavevmode\hbox to3em{\hrulefill}\thinspace}
\providecommand{\MR}{\relax\ifhmode\unskip\space\fi MR }
\providecommand{\MRhref}[2]{%
  \href{http://www.ams.org/mathscinet-getitem?mr=#1}{#2}
}
\providecommand{\href}[2]{#2}
\begin{thebibliography}{VHVR67}

\bibitem[AM12]{hiroshi_2012}
Hiroshi Ando and Yasumichi Matsuzawa, \emph{{Lie group-Lie algebra
  correspondences of unitary groups in finite von Neumann algebras}}, Hokkaido
  Math. J. \textbf{41} (2012), no.~1, 31 -- 99.

\bibitem[Ber82]{berberian}
S.~K. Berberian, \emph{The maximal ring of quotients of a finite von {N}eumann
  algebra}, Rocky Mountain J. Math. \textbf{12} (1982), no.~1, 149--164.
  \MR{649748}

\bibitem[Che83]{chernoff_paul}
Paul~R. Chernoff, \emph{A semibounded closed symmetric operator whose square
  has trivial domain}, Proc. Amer. Math. Soc. \textbf{89} (1983), 289--290
  (English).

\bibitem[Dix49]{dixmier1949}
Jacques Dixmier, \emph{Étude sur les variétés et les opérateurs de julia,
  avec quelques applications}, Bull. Soc. Math. France \textbf{77} (1949),
  11--101 (French). \MR{32937}

\bibitem[Dou66]{douglas_lemma_paper}
R.~G. Douglas, \emph{On majorization, factorization, and range inclusion of
  operators on {Hilbert} space}, Proc. Amer. Math. Soc. \textbf{17} (1966),
  413--415 (English).

\bibitem[Fri34]{friedrichs_1934}
K.~Friedrichs, \emph{Spektraltheorie halbbeschr{\"a}nkter {Operatoren} und
  {Anwendung} auf die {Spektralzerlegung} von {Differentialoperatoren}. {I},
  {II}.}, Math. Ann. \textbf{109} (1934), 465--487, 685--713 (German).

\bibitem[FW71]{fillmore-williams}
P.~A. Fillmore and J.~P. Williams, \emph{On operator ranges}, Adv. Math.
  \textbf{7} (1971), 254--281.

\bibitem[Han83]{handelman}
David Handelman, \emph{Rickart {$C^{\ast} $}-algebras. {II}}, Adv. Math.
  \textbf{48} (1983), no.~1, 1--15. \MR{697612}

\bibitem[Izu89]{izumino_quotient_bdd}
Saichi Izumino, \emph{Quotients of bounded operators}, Proc. Amer. Math. Soc.
  \textbf{106} (1989), no.~2, 427--435 (English).

\bibitem[Kad98]{kadison_dual_cone}
Richard~V. Kadison, \emph{Dual cones and {Tomita}-{Takesaki} theory}, Operator
  algebras and operator theory. Proceedings of the international conference,
  Shanghai, China, July 4--9, 1997, Providence, RI: American Mathematical
  Society, 1998, pp.~151--178.

\bibitem[Kau78]{kaufman}
William~E. Kaufman, \emph{Representing a closed operator as a quotient of
  continuous operators}, Proc. Amer. Math. Soc. \textbf{72} (1978), 531--534.

\bibitem[Kau79]{kaufman_semi_closed}
\bysame, \emph{Semiclosed operators in {Hilbert} space}, Proc. Amer. Math. Soc.
  \textbf{76} (1979), 67--73 (English).

\bibitem[KK14]{kumar_krishna_2014}
Jitender Kumar and K.~V. Krishna, \emph{Affine near-semirings over brandt
  semigroups}, Comm. Algebra \textbf{42} (2014), no.~12, 5152--5169.

\bibitem[KL14]{kadison_liu_2014}
Richard~V. Kadison and Zhe Liu, \emph{The {Heisenberg} relation -- mathematical
  formulations}, SIGMA, Symmetry Integrability Geom. Methods Appl. \textbf{10}
  (2014), paper 009, 40.

\bibitem[Kos06]{kosaki_domains}
Hideki Kosaki, \emph{On intersections of domains of unbounded positive
  operators}, Kyushu J. Math. \textbf{60} (2006), no.~1, 3--25 (English).

\bibitem[KR86]{kr-II}
Richard~V. Kadison and John~R. Ringrose, \emph{Fundamentals of the theory of
  operator algebras. {Volume} {II}: {Advanced} theory}, Pure Appl. Math.,
  Academic Press, vol. 100, Academic Press, New York, NY, 1986.

\bibitem[KR97]{kr-I}
\bysame, \emph{Fundamentals of the theory of operator algebras. {Vol}. {I}:
  {Elementary} theory. 2nd printing with correct}, 2nd printing with correct.
  ed., Grad. Stud. Math., vol.~15, Providence, RI: American Mathematical
  Society, 1997.

\bibitem[Kre47]{krein_1947}
M.~G. Kre{\u{\i}}n, \emph{The theory of self-adjoint extensions of semi-bounded
  {Hermitian} transformations and its applications. {I}}, Mat. Sb., Nov. Ser.
  \textbf{20} (1947), 431--495 (Russian).

\bibitem[Kri05]{krishna_thesis_2005}
K.~V. Krishna, \emph{Near-semirings: Theory and application}, Ph.D. thesis,
  Indian Institute of Technology Delhi, 2005.

\bibitem[Mac46]{mackey1946domains}
G.~W. Mackey, \emph{On the domains of closed linear transformations in hilbert
  space}, Bull. Amer. Math. Soc. \textbf{52} (1946), 1009.

\bibitem[MDM09]{messirdi_almost_closed}
Sanaa Messirdi, Mustapha Djaa, and Bekkai Messirdi, \emph{Stability of almost
  closed operators on a {Hilbert} space}, Sarajevo J. Math. \textbf{5} (2009),
  no.~1, 133--141 (English).

\bibitem[MS97]{samman_1997}
J.~D.~P. Meldrum and M.~S. Samman, \emph{On free d. g. seminear-rings}, Riv.
  Math. Univ. Parma, V. Ser. \textbf{6} (1997), 93--102 (English).

\bibitem[MvN36]{MvN-I}
F.~J. Murray and J.~von Neumann, \emph{On rings of operators.}, Ann. of Math.
  (2) \textbf{37} (1936), 116--229 (English).

\bibitem[Nay21a]{nayak_douglas_vNa}
Soumyashant Nayak, \emph{The {Douglas} lemma for von {Neumann} algebras and
  some applications}, Adv. Oper. Theory \textbf{6} (2021), no.~3, 13, Id/No 47.

\bibitem[Nay21b]{Nayak_2021}
\bysame, \emph{On {Murray}-von {Neumann} algebras. {I}: {Topological},
  order-theoretic and analytical aspects}, Banach J. Math. Anal. \textbf{15}
  (2021), no.~3, 40, Id/No 45.

\bibitem[Neu29]{neumann1929}
J.~von Neumann, \emph{Zur theorie der unbeschränkten matrizen.}, J. Reine
  Angew. Math \textbf{161} (1929), 208--236.

\bibitem[Neu40]{neumark_square}
M.~Neumark, \emph{On the square of a closed symmetric operator}, C. R. (Dokl.)
  Acad. Sci. URSS, n. Ser. \textbf{26} (1940), 866--870 (English).

\bibitem[Roo68]{roos}
Jan-Erik Roos, \emph{Sur l'anneau maximal de fractions des
  {$AW\sp*$}-alg\`ebres et des anneaux de {B}aer}, C. R. Acad. Sci. Paris
  S\'{e}r. A-B \textbf{266} (1968), A120--A123. \MR{0244779}

\bibitem[Sam09]{samman_2009}
Mohammad Samman, \emph{Inverse semigroups and seminear-rings.}, J. Algebra
  Appl. \textbf{8} (2009), no.~5, 713--721 (English).

\bibitem[Sch72]{schaeffer}
David~G. Schaeffer, \emph{An application of von neumann algebras to finite
  difference equations}, Ann. of Math. (2) \textbf{95} (1972), no.~1, 117--129.

\bibitem[Sch83]{konrad_1983}
Konrad Schm{\"u}dgen, \emph{On domains of powers of closed symmetric
  operators}, J. Operator Theory \textbf{9} (1983), 53--75 (English).

\bibitem[Sch93]{schneider_convex}
R.~Schneider, \emph{Convex bodies: The {B}runn-{M}inkowski theory},
  Encyclopedia of Mathematics and its Applications, Cambridge University Press,
  1993.

\bibitem[Sch12]{konrad_unbdd}
Konrad Schm{\"u}dgen, \emph{Unbounded self-adjoint operators on {Hilbert}
  space}, Grad. Texts Math., vol. 265, Dordrecht: Springer, 2012.

\bibitem[Seg53]{segal_integration}
I.~E. Segal, \emph{A non-commutative extension of abstract integration}, Ann.
  of Math. (2) \textbf{57} (1953), no.~3, 401--457.

\bibitem[Ska79]{skau_paper}
Christian~F. Skau, \emph{Positive self-adjoint extensions of operators
  affiliated with a von {Neumann} algebra}, Math. Scand. \textbf{44} (1979),
  171--195.

\bibitem[Sto51]{stone51}
Marshall~Harvey Stone, \emph{On unbounded operators in hilbert space}, J.
  Indian Math. Soc. (N.S.) \textbf{15} (1951), 155--192.

\bibitem[Tak02]{takesaki-I}
M.~Takesaki, \emph{Theory of operator algebras {I}.}, 2nd printing of the 1979
  ed. ed., Encycl. Math. Sci., vol. 124, Berlin: Springer, 2002 (English).

\bibitem[VHVR67]{hoorn_willy_root_1967}
Willy~G. Van~Hoorn and B.~Van~Rootselaar, \emph{Fundamental notions in the
  theory of seminearrings}, Compos. Math. \textbf{18} (1967), no.~1-2, 65--78
  (en). \MR{227231}

\bibitem[vN18]{vNeuman_quantum}
John von Neumann, \emph{Mathematical foundations of quantum mechanics. {Edited}
  by {Nicholas} {A}. {Wheeler}. {Translated} from the {German} by {Robert} {T}.
  {Beyer}}, new edition ed., Princeton, NJ: Princeton University Press, 2018
  (English).

\bibitem[VR64]{rootselaar_1963}
B.~Van~Rootselaar, \emph{Algebraische {Kennzeichnung} freier
  {Wortarithmetiken}}, Compos. Math. \textbf{15} (1962-1964), 156--168 (de).
  \MR{168459}

\bibitem[Yea73]{yeadon_1973}
F.~J. Yeadon, \emph{Convergence of measurable operators}, Math. Proc. Cambridge
  Philos. Soc. \textbf{74} (1973), no.~2, 257–268.

\end{thebibliography}

\end{document}